\newtheorem{theorem}{Theorem}[section]
\newtheorem{proposition}[theorem]{Proposition}
\newtheorem{definition}[theorem]{Definition}
\newtheorem{lemma}[theorem]{Lemma}
\newtheorem{corollary}[theorem]{Corollary}
\newtheorem{remark}[theorem]{Remark}
\newcommand{\fraku}{\mathfrak{U}}
\newcommand{\A}{\mathcal{A}}
\newcommand{\spam}{\mathop{\mathrm{span}}}
\newcommand{\ints}{\mathbb{Z}}
\newcommand{\nats}{\mathbb{N}}
\newcommand{\reals}{\mathbb{R}}
\newcommand{\M}{\mathbb{M}}
\newcommand{\sphere}{\mathbb{S}^d}
\newcommand{\sph}{\mathbb{S}} 
\newcommand{\Exp}{\operatorname{Exp}}
\newcommand{\man}{\M}
\newcommand{\dif}{\mathrm{d}}
\newcommand{\Rad}{\left( \frac{h}{\Gamma_1 h_0}\right)}
\newcommand{\rad}{ \frac{h}{\Gamma_1 h_0}}
\renewcommand{\b}{\mathbf{b}}
\renewcommand{\a}{\mathbf{a}}
\newcommand{\bft}{\mathbf{t}}
\newcommand{\bfu}{\mathbf{u}}
\newcommand{\bfv}{\mathbf{v}}
\newcommand{\bfw}{\mathbf{w}}
\newcommand{\inj}{\mathrm{r}_\M}
\renewcommand{\d}{\mathrm{dist}}
\renewcommand{\aa}{\mathfrak{a}}
\newcommand{\comp}{\b^{\complement}}
\newcommand{\bb}{\mathfrak{b}}
\newcommand{\bbcomp}{\mathfrak{b}^{\complement}}
\renewcommand{\u}{\widetilde{\chi}}
\renewcommand{\A}{\mathcal{A}}
\newcommand{\ph}{\widetilde{\phi}}
\newcommand{\D}{\{1...d\}}
\newcommand{\sfp}{\mathsf{p}}
\newcommand{\sfq}{\mathsf{q}}
\newcommand{\stardom}{\mathcal D}  
\newcommand{\inrad}{r}  
\def\caln{{\mathcal N}}
\def\calc{{\mathcal C}}
\def\bfe{{\bf e}}
\def\hati{{\hat \imath}}
\def\hatj{{\hat \jmath}}
\def\bft{{\bf t}}
\def\bfu{{\bf u}}
\numberwithin{equation}{section}
\title{Kernel Approximation on Manifolds I: Bounding the Lebesgue Constant
\thanks{ \emph{2000 Mathematics
   Subject Classification:} 41A05, 41A63, 46E22, 46E35 }
\thanks{\emph{Key words:}
   manifold, Lebesgue constant, positive definite kernels, Sobolev spaces}}
\author{T.Hangelbroek\thanks{Texas A\&M
    University College Station, TX 77843, USA. Research supported
    by NSF Postdoctoral Research Fellowship.}, 
F. J. Narcowich\thanks{ Department of Mathematics, Texas A\&M
    University College Station, TX 77843, USA. Research
    supported by grants DMS-0504353 and DMS-0807033 from the National
    Science Foundation.}, 
J. D. Ward\thanks{ Department of Mathematics, Texas A\&M University
    College Station, TX 77843, USA. Research supported by
    grants DMS-0504353 and DMS-0807033 from the National Science
    Foundation.}  }
\begin{document}
\maketitle

\begin{abstract}
 The purpose of this paper is to establish that for any compact, connected $C^\infty$ Riemannian manifold there exists a robust family of kernels of increasing smoothness that are well suited for interpolation. They generate Lagrange functions that are uniformly bounded and decay away from their center at an exponential rate.
 An immediate corollary is that the corresponding Lebesgue constant will be
 uniformly bounded with a constant whose only dependence on the set of data sites is reflected
in the \emph{mesh ratio}, which measures the uniformity of the data.

The analysis needed for these results was inspired by some fundamental work of Matveev where the Sobolev decay of Lagrange
functions associated with certain kernels on $\Omega \subset \reals^d$ was obtained. With a bit more work, one establishes the following: Lebesgue constants
associated with surface splines and Sobolev splines are uniformly bounded on $\reals^d$ provided the data sites $\Xi$ are quasi-uniformly distributed.
The non-Euclidean case is more involved as the geometry of the underlying surface comes into play.
In addition to establishing bounded Lebesgue constants in this setting, a ``zeros lemma'' for compact Riemannian manifolds is established.
\end{abstract}

\section{Introduction}
Radial Basis Functions (RBFs) and their counterparts on the sphere (SBFs) are now well-known
and widely used for fitting a surface to scattered data arising from sampling an unknown function defined on $\reals^d$ ( or $\sphere$). The fitting of the surface is typically implemented by means of interpolation or discrete least squares. Both of these tools fit under the larger umbrella of kernel based approximation, a methodology, suitable for treating functions defined on very general domains. 
This approach has achieved success in approximating scattered data on spheres and 
Euclidean domains 
(including the case when data is arranged on lower dimensional manifolds),
as well as more esoteric domains like graphs and Lie groups \cite{Castel-etal-07-1,Erb-etal-08-1,Gutzmer-96-1,jones-etal-2008}.  
The hallmark of this methodology is to use the kernel $\kappa$ to create an approximant, $s$, by taking linear combinations of  the form $s = \sum_{j=1}^N A_j \kappa(\cdot, x_j)$. 
Evidently the first  challenge is to find a ``scheme'' assigning the coefficients and sometimes the ``centers'' $x_j$, $j=1...N$. 

One such a scheme, interpolation, is known to work quite well and the interpolants even exhibit ``local'' behavior just as in the case of univariate spline interpolation. 
The reasons for the local behavior of spline interpolation, i.e., where changing the data locally only significantly alters the interpolant locally, are well understood. 
In fact since univariate splines are compactly supported, the corresponding interpolation (or collocation) matrices are banded. Thus, whenever the interpolation matrices are invertible, the inverse matrices decay exponentially away from the main diagonal (see \cite{Demko} and \cite{DMS}). 
This fact, together with the local support of spline functions, implies the exponentially fast decay of Lagrange functions away from their ``centers''.

In contrast, many popular kernels (including RBFs and SBFs) are typically globally supported and the 
interpolation matrices,
$\calc_{\Xi}:=(\kappa(\xi,\zeta))_{(\xi,\zeta)\in \Xi}$,
are full. Even in the case of the compactly supported Wendland functions, as the number of data sites increase while still interpolating with shifts of a fixed $\phi$, the bands of the interpolation matrices become ever larger. 

Nevertheless, in this paper, we will show that for 
any compact, complete Riemannian manifold there exists a robust family of kernels of increasing smoothness, 
$\{\kappa_{m,\M}: m\in \ints, m>d/2\}$ (see Section~ \ref{native_space_kernels} for a definition),
generating Lagrange (or fundamental, or cardinal) functions --
the functions $\chi_{\xi} = \ \sum_{\zeta\in \Xi} A_{\zeta} \kappa(\cdot, \zeta)$ satisfying
$\chi_{\xi}(\zeta)= \delta(\zeta,\xi)$ --
that are uniformly bounded and decay away from their center at an exponential type rate. 
An immediate corollary  is that the corresponding Lebesgue 
constant will be uniformly bounded with a constant 
whose only dependence on $\Xi$ is reflected in the mesh ratio, 
which measures the uniformity of the data, a simple parameter measuring the ``badness'' of the geometry of the data  
(measured as the ratio of mesh norm to separation radius -- see definitions below). 

The Lebesgue constant, 
$L:=L(\Xi):=\sup_{x\in \M} \sum_{\xi\in \Xi}|\chi_{\xi}(x)|$, 
is  the $L_{\infty}\to L_{\infty}$ operator norm of the projector 
that maps continuous functions to interpolants, 
$f \mapsto I_{\Xi} f:=\sum_{\xi \in \Xi}f(\xi)\chi_{\xi}$.
This constant provides a measure of the stability of the interpolation process. 
This, it should be noted, is independent of the myriad ways the  interpolant may be determined; 
how coefficients are determined depends strongly on the basis for the space of interpolants -- 
using the kernels $\bigl(\kappa(\cdot,\xi)\bigr)_{\xi\in\Xi}$ as the basis for this space
may lead to  coefficients of indeterminant size, 
$(c_{\xi})_{\xi\in\Xi}:= \calc_{\Xi}^{-1} \bigl(f(\xi)\bigr)_{\xi\in \Xi}$,
while using the basis of Lagrange functions always involves a stable selection of coefficients. Thus, the boundedness of the Lebesgue constant indicates the stability of 
the kernel based interpolation we consider, while the rapid decay of the Lagrange functions shows that this kind of interpolation is local in nature.

The challenge of finding stable interpolation processes is underlined by the
fact that, for many of the basic tools in approximation theory (e.g., polynomials, trigonometric functions, spherical harmonics), interpolation is known to be \emph{unstable}.
A consequence of our main result is that for any $\sphere$, the family of SBFs $\kappa_{m,\sphere}$ give rise to bounded Lebesgue constants (each corresponding, roughly, to the fundamental solution of an elliptic differential operator). 
This result stands in stark contrast with any {\em polynomial} interpolation scheme on the sphere. In particular, let $\Pi_L$ be the space of all spherical harmonics of total degree $L$ or less and let $T$ be any bounded linear projector (i.e., satisfying $T^2=T$)
$$T: C(\sphere) \rightarrow \Pi_L$$
with both spaces endowed with the $L^{\infty}(\sphere)$ norm. Then, as shown in
 \cite{Reimer},
$$
||T||_{\infty} 
\geq 
L^{\frac{d-1}{2} }, \enskip d \geq 2.
$$
Thus, if $\Xi$ has mesh norm $h$ and $h \approx q$, 
then $\# \Xi = {\mathcal O}(h^{-n}) \approx \dim \Pi_{h^{-1}}$.
In this case any polynomial interpolation scheme would have Lebesgue constant
$\geq {\cal O}\bigl((\frac{1}{h})^{(d-1)/ 2}\bigr).$

Another happy consequence of the uniform boundedness of the Lebesgue constant is
the so-called Lebesgue lemma, which states that interpolation is {\em near best} $L_{\infty}$ approximation from the space of interpolants,
$S_{\Xi}= \spam \{ \kappa(\cdot,\xi): \xi \in \Xi\}$. 
Since $I_{\Xi} s = s$ for each $s\in S_{\Xi}$, one has
$$\|f-I_{\Xi}f\| \le  \|f-s -I_{\Xi}(f-s)\|_{\infty} \le (1+L) \d(f,S_{\Xi}).$$
This is significant for kernel based approximation,  because $L_{\infty}$ error estimates for 
interpolation have been notably imprecise. By using direct methods of error analysis, it has, to date, only been possible to measure error for target functions coming from a certain reproducing kernel Hilbert
space associated with $\kappa$. (Frequently this space is an $L_2$ Sobolev space; for
an example of this kind of error estimate, see Lemma \ref{NSEE} of this paper.)
Because one cannot assume that data is generated by a target function from a specific smoothness space, it is desirable to have an accurate view of approximation power for target functions of 
arbitrary smoothness, e.g., by having error estimates for a scale of spaces capturing fractional smoothness.
Unfortunately, precise error estimates for interpolation from the scale of H{\"o}lder spaces ($C^s$), Bessel potential spaces ($W_{\infty}^s$), Besov spaces ($B_{\infty,q}^s$) or other spaces measuring smoothness in $L_{\infty}$ have not been forthcoming.
Recently, this has been addressed for kernel based approximation, using schemes other than interpolation, \cite{Hang, MNPW}, and the results of this article point the way toward fixing a significant gap in the study of kernel interpolation.

To illustrate this, we compute the SBF, $\kappa_{2,\sph^2}$, on the sphere in $\reals^3$: $\sph^2$. This is a kernel having a bounded Lebesgue constant, but it
also belongs to the scheme of kernels considered in \cite{MNPW}, and, hence, its
best approximation properties are well known. It is associated with a 
fourth order elliptic partial differential operator on $\sph^2$, and provides
{\em approximation order} $\sigma$ for functions having smoothness $\sigma$ in $L_{\infty}$,
provided $\sigma \le 4$. This means that for a function, $f$, having smoothness $\sigma$,
$\d(f, S_{\Xi})=\mathcal{O}(h^{\sigma})$, where $h$ is the mesh norm of $\Xi$.
Previously, the most one could have said about approximation power
of interpolation with this kernel is that $\|I_{\Xi}f - f\|_{\infty} =  \mathcal{O}(h^{1})$ 
for functions, $f$, having two derivatives in $L_2$.
Thus, the Lebesgue lemma ensures that interpolants provide optimal $L_{\infty}$ approximation orders, and not only for continuous target functions of low smoothness; it also significantly improves the approximation power for functions having greater smoothness. 

\subsection{Methodology}
There is a unifying thread that explains the fast decay of the Lagrange functions for both the univariate splines and kernels we consider. 
Both interpolation problems can be considered as minimization problems with interpolatory constraints: $I_{\Xi}f = \mathrm{argmin}\{\|s\|:s_{|_{\Xi}} = f_{|_{\Xi}}\}$. This variational
aspect of the theory plays a well-known role in the etymology of the term spline -- 
the draftsman's spline is a flexible implement that passes through points fixed on drawingboard by assuming a smooth curve that minimizes a ``bending energy,'' the $L_2$ norm
of the second derivative.
The connection between kernel interpolation and
variational problems constrained by interpolatory conditions in Hilbert spaces 
is the basis of the approach taken by Madych and Nelson (cf. \cite{MaNe}), 
which has since become a fundamental part of RBF and SBF theory.  
However, the idea of cleverly exploiting this variational property to obtain decay of Lagrange functions was developed by Matveev in an impressive paper \cite{Mat}, that has gone mostly ignored (Johnson's use of Matveev's results to achieve surface spline $L_p$ saturation orders, \cite{MJ}, is a notable exception).

The subject of Matveev's interpolation problem is the $D^m$-spline, the interpolant minimizing a Sobolev seminorm $| f |_{m,\Omega} = \left( \sum_{|\alpha| =m} \int_{\Omega}  |D^{\alpha} f(x)|^2\dif x\right)^{1/2}$.  For general $\Omega$, the minimizing kernel is computationally intractable, but the case  $\Omega= \reals^d$ gives one of the pre-eminent families of RBFs, the well-known surface (or polyharmonic) splines considered by Duchon \cite{D1, D2} and many others.

Matveev's technique establishes a `Sobolev decay' of the Lagrange functions, i.e., the decay of the Sobolev seminorm of $\chi_{\xi}$ --- the Lagrange function for surface spline interpolation --- over the punctured plane,  $\reals^d \setminus B(\xi,R)$, having a hole of radius $R$ (cf. Section~\ref{lagrange_function}). Johnson applied a Sobolev embedding argument to obtain pointwise estimates
\cite[Corollary 2.3]{MJ} for Lagrange functions whose centers $\xi$ come from a fairly regular set of
data sites $\Xi$. It is clear that exponential decay such as this holding for {\em all} Lagrange functions 
would be sufficient to uniformly bound the `boundary-free' Lebesgue constant. A minor, albeit oblique, modification of his argument would show this. We remark that Johnson's paper had a completely different focus than stability of surface spline interpolation, and exponential bounds on the Lagrange functions were ancillary to its main goal.

In this article we have improved the approach to estimating Lagrange functions. Although it has been adapted to handle interpolation on compact Riemannian manifolds, the setting for the core of the argument treating Lagrange functions is the tangent space at $\xi$;
it is essentially still an argument set in $\reals^d$. 
The article is roughly divided into a component where the geometry of the manifold is important, where a powerful relationship between Sobolev norms on the manifold and on $\reals^d$ is established, 
and a second component, in which the Riemannian geometry is not so important,
that gives Sobolev and pointwise decay of the Lagrange functions 
modeled on Matveev's argument --- although with a different underlying variational problem, and with a different embedding argument, providing stronger pointwise estimates. 

\subsection{Outline}
Section~\ref{riemannian_manf} is devoted to Riemannian manifolds. Since this paper is about approximation theory,  not differential geometry \emph{per se}, we provide, in Section~\ref{background_manf}, the requisite geometric background on Riemannian manifolds -- discussing geodesics, Christoffel symbols, exponential maps, and so forth. Section~\ref{covariant_bounds} concerns the covariant derivative, which is essential for defining invariant Sobolev spaces. In particular, we provide various pointwise bounds on such derivatives in terms of ordinary (flat space) derivatives. These bounds are what allow us to use Euclidean-space Sobolev estimates to obtain similar bounds for the covariant Sobolev spaces on  a Riemannian manifold.

The covariant Sobolev spaces we will work with are due to Aubey (cf. \cite{Aub}); we define and discuss them in Section~\ref{sobolev_manf}.  In Section~\ref{metric_equiv}, we use the covariant-derivative bounds derived in Section~\ref{covariant_bounds} to provide a very strong equivalence between Sobolev norms over regions on the manifold and corresponding regions in the tangent space, with constants of equivalence that are independent of the geometry of the region. Section~\ref{sobolev_zeros} treats the problem of estimating functions having many zeros in a star-shaped domain in domain in $\reals^d$, and from that and the strong equivalence above, deduce similar results for a geodesic ball on a manifold. This is a key element to estimating Lagrange functions, which, of course, have many zeros. A consequence of this ``zeros lemma'' is a conventional uniform estimate on the interpolation error for many kinds of kernels. In Section~\ref{native_space_kernels}, we introduce the family of kernels associated with bounded Lebesgue constants. These are reproducing kernels for the invariant Sobolev spaces $W_2^m(\M)$ mentioned above.

In Section~\ref{lagrange_function}, we provide pointwise estimates of Lagrange functions and the main theorem, which estimates the size of the Lebesgue constant for interpolation on manifolds or on $\reals^d$ with surface splines or Sobolev splines. In Section~\ref{lagrange_function_bnds}, a bound on the size (in $L_{\infty}$) of Lagrange functions from a very general class of kernels is presented (kernels having a Sobolev space as their native space). 
Section ~\ref{lagrange_function_decay} treats the decay of the Lagrange functions from the class of kernels associated with the inner product of Section~\ref{sobolev_manf}.
This is done in roughly three stages. 
The first stage provides the basis for a ``bulk chasing'' argument by showing that the bulk of the tail of the Sobolev norm is contained in a thin annulus near the beginning of the tail. The second stage iterates this result, showing that the tail of the Sobolev norm decays exponentially. 
The third stage uses the zeros lemma
to provide the exponentially decaying pointwise estimates of the Lagrange function.
Section~\ref{lebesgue_const_bnd} gives the main result, which follows in a direct way from the decay
of the Lagrange functions.

The concrete, spherical, example of the SBF $\kappa_{2, \sph^2}$ is explicitly calculated in Section~\ref{example_S2}. It is shown that this SBF is a perturbation of the fundamental solution of the bi-Laplacian, $\Delta^2$ on $\sph^2$, and hence is of the type treated in \cite{MNPW}. 
In particular, it inverts a fourth order differential operator, and is, for this reason, not very different from either the thin plate splines in $\reals^2$ or the univariate cubic splines. Since much is known about the abstract approximation properties of these types of SBFs (but little is known about the convergence of interpolation), an application of Lebesgue's lemma reveals that interpolation provides $L_{\infty}$ approximation orders
commensurate to the smoothness of the function. In particular, 
 $\|I_{\Xi}f - f\|_{\infty} = \mathcal{O}(h^4)$ for functions in $C^4(\sph^2)$.

\subsection{Notation}
$\M$ denotes a compact, complete Riemannian manifold of dimension $d$.  More will be said about it in Section~\ref{riemannian_manf}, but for now we simply remark that it is a compact manifold -- a
Hausdorff space possessing a (maximal) family of charts $(\phi_{j},U_{j})_{j\in \mathcal{J}},$ where each map $\phi_{j}:U_{j}\subset\reals^d\to V_j\subset\M$ is a bijection and the compatibility condition $ \phi_{k}^{-1}\circ\phi_{j}\in C^{\infty}\bigl(\phi_j^{-1}(U_k)\bigr),$ for every $k,j\in \mathcal{J}$ is satisfied.
At every point $p\in \M$, we denote the tangent space by $T_pM$. This fact gives rise
to a notion of arc length of curves, a metric: $(\xi,\alpha)\mapsto \d(\xi,\alpha)$, and
measure, which we denote by $\mu$.

The basic tools used for estimating Lagrange
functions will be the various Sobolev norms and seminorms on balls, complements of balls and annuli.

Although the formulation for Sobolev spaces on manifolds is rather technical, hence postponed until the next section, the Euclidean versions are standard and quite simple.
For an open set $\Omega\subset \reals^d$, and a positive integer $j$, we define the seminorm of the function $u:\Omega \to \reals$ as 
$$|u|_{j,\Omega}:=|u|_{W_2^j(\Omega)}:=\left(\sum_{|\beta| = j}\binom{j}{\beta}  \int_{\Omega} \left|D^{\beta}u (x)\right|^2 \dif x \right)^{1/2}.$$
where the multinomial coefficient is $\binom{j}{\beta} := \frac{j!}{\beta_1!\dots \beta_d!}.$ The Sobolev norm is then
$$\|u\|_{j,\Omega}:=\|u\|_{W_2^j(\Omega)} := 
\left( \sum_{j=0}^m |u|_{j,\Omega}^2\right)^{1/2}.$$

Frequently we must simultaneously refer to sets on the manifold and on Euclidean space. To avoid confusion, we adopt the following notation.
We denote the ball in $\reals^d$ centered at $x$ having radius $r$ by $B(x,r).$ 
For $t>0$, we define the annulus  in $\reals^d$ centered at $x$ of thickness $r$ and outer radius $tr$  by
\begin{equation*}\label{Euc_annulus}
A(x, t,r):= B(x,tr) \backslash   B(x,(t-1)r).
\end{equation*}
We denote the ball in $\man$ centered at $\alpha$ having radius $r$ by $\b(\alpha,r).$ 
For $t>0$, we define the annulus in $\man$ centered at $\alpha$ by
\begin{equation*}\label{Man_annulus}
\a(\alpha,t, r):= \b(\alpha,tr) \backslash   \b(\alpha,(t-1)r).
\end{equation*}
The set $\Xi\subset\M$ will be assumed finite. Its  \emph{mesh norm}  (or \emph{fill distance}) $h:=\sup_{x\in \M} \d(x,\Xi)$ measures the density of $\Xi$ in $\M$, while the \emph{separation radius}
$q:=\frac12 \inf_{\substack{\xi,\zeta\in \Xi\\ \xi\ne \zeta}} \d(\xi,\zeta)$ determines the
spacing of $\Xi$. The \emph{mesh ratio} $\rho:=h/q$ measures the uniformity of the distribution of $\Xi$ in $\M$.
%

\section{Riemannian Manifolds}\label{riemannian_manf}
Since this paper is primarily about approximation,  we will provide the necessary background in Riemannian geometry -- e.g., Christoffel symbols, covariant and contravariant
tensors, exponential maps, etc. --  that will be needed here. For more information, see the books by do Carmo \cite{doC2,doC1}.

\subsection{Background}\label{background_manf}
Throughout our discussion, we will assume that $(\M,g)$ is a $d$-dimensional, connected, $C^\infty$  Riemannian manifold without boundary;  the Riemannian metric for $\M$ is $g$, which  defines an inner product $g_p(\cdot,\cdot )=\langle\cdot,\cdot \rangle_{g,p}$ on each tangent space $T_pM$. As usual, a chart is a pair $(\fraku, \phi)$ such that $\fraku\subset \M$ is open and the map $\phi:\fraku \to \reals^d$ is a one-to-one homeomorphism. An atlas is a collection of charts $\{(\fraku_\alpha,\phi_\alpha)\}$ indexed by $\alpha$ such that that $\M=\cup_\alpha \fraku_\alpha$ and, when $\phi_\alpha(\fraku_\alpha)\cap\phi_\beta(\fraku_\beta)\ne \emptyset$, $\phi_\beta \circ\phi_\alpha^{-1}$ is $C^\infty$. In a fixed chart  $(\fraku, \phi)$, the points $p\in \fraku$ are parametrized by $p=\phi^{-1}(x)$, where $x=(x^1,\ldots,x^d)\in U=\phi(\fraku)$.  As is common in differential geometry, we will use superscripts to denote coordinates. Thus, $x^j$ is the $j^{\text{th}}$ local coordinate for $p$ relative to the chart. Most of the analysis we will do takes place in local coordinates.

In such coordinates for $\fraku$, the tangent space $T_pM$ at $p$ has a basis comprising the partial derivatives $\left(\frac{\partial}{\partial x^j}\right)_p$, where $j=1,\ldots,d$. This allows us to define smoothly varying vector fields on $U$ via
\[
\bfe_j(x):= \frac{\partial}{\partial x^j},\ j=1,\ldots,d,\ x\in U=\phi(\fraku).
\]
For $f\in C^\infty(\fraku)$, so that $f\circ \phi^{-1}\in C^\infty(U)$, the action of a vector field $\bfv$ thus is given by the following:
\[
\bfv(f)(p) = \sum_{j=1}^d v^j(x) \frac{\partial (f\circ \phi^{-1})}{\partial x^j}= \sum_{j=1}^d v^j \bfe_j(f),
\]
for any $x=\phi(p)\in U$. The coefficients  $ v^j=v^j(x)$ are smooth functions; they are called the 
\emph{contravariant components} of $\bfv$ relative to the $\bfe_j$ basis.  We write $\bfv(f)(p)$, rather than  $\bfv(f)(x)$ because the expression on the left is independent of the choice of coordinates for $p$. Obviously, in terms of tangent vectors alone we have that
\[
\bfv = \sum_{j=1}^d v^j \bfe_j.
\]

\paragraph{Metric tensor} Let $v^j$ and $w^j$ be the contravariant components for the vectors $\bfv$ and $\bfw$ in $T_pM$. The inner product for the Riemannian metric $g$ is given is then given by
\[
\langle\bfv,\bfw \rangle_{g,p} = \sum_{i,j=1}^d g_{ij}(x)v^i(x) w^j(x), \ x=\phi(p) \ (\text{contravariant form)}.
\] 
The $g_{ij}$ are the \emph{covariant components} of $g$, and are given by  $g_{ij}(x) = \langle \bfe_i, \bfe_j \rangle_{g,p}$, which are the entries of the $d\times d$ Gram matrix for the basis $\{\bfe_i\}$. As was the case with $\bfv(f)(p)$,  $\langle\bfv,\bfw \rangle_{g,p}$ is itself independent of coordinates. In addition, if  $\bfv$ and $\bfw$ are $C^\infty$ vectors fields in $p$, then $\langle\bfv,\bfw \rangle_{g,p}$ is also $C^\infty$. Finally, because $g_{ij}$ is a Gram matrix, it is symmetric and positive definite.

The space dual to $T_p M$ is the cotangent space, and it is denoted by $T_p^\ast M$. 
Corresponding to the basis $\bfe_j=\frac{\partial}{\partial x^j}$ for tangent vectors, 
we have the dual vectors $\bfe^j = dx^j$.
The two bases are related through the metric via
\[
\bfe^j(x) = \sum_{k\in \D}g^{jk}(x)\bfe_k(x) \ \text{and}\ \bfe_k(x) = \sum_{j\in\D}g_{kj}(x)\bfe^j(x).
\]
The matrix with entries $g^{jk}$ is simply the inverse of the metric tensor $(g_{jk})$.
Making the usual identification between a vector space equipped with an inner product and its dual space, we may represent a vector $\bfv$ in terms of the $\bfe^j$'s as $\bfv =\sum_{j\in \D}v_j\bfe^j$. In this case the $v_j$'s are called the \emph{covariant components} of $\bfv$. The natural inner product of vectors $\bfv=\sum_{j\in \D}  v_j\bfe^j$ and $\bfw=\sum_{j\in \D}  w_j\bfe^j$ expressed in covariant components is 
\[
\langle \bfv, \bfw\rangle_{g,p} 
=
\sum_{i,j=1}^d g^{ij}(x) v_i(x) w_j(x) \  (\text{covariant form)}.
\]

An order $k$ \emph{covariant tensor} $\mathbf T$ is a real-valued, multilinear function of the $k$-fold tensor product of $T_pM$. We denote by $T_p^kM$ the covariant
tensors of of order $k$ at $p$. In terms of the local coordinates, there is a smoothly 
varying basis $\bfe^{i_1}\otimes \dots\otimes \bfe^{i_k}$ for the $k$-fold tensor product of tangent spaces. Thus, the covariant tensor field $\mathbf T$ of order $k$ on $\fraku$ can be written as
\[
\mathbf T = \sum_{\hati \in \D^k} T_{\hati}\, \bfe^{i_1}\otimes \cdots\otimes \bfe^{i_k},
\]
where we adopt the convention $\hati = (i_1,\dots,i_k)$. The $T_{\hati}$ are the covariant components of $\mathbf T$. The metric $g$ is itself an order 2 covariant tensor field. One can also define contravariant tensors and tensors of mixed type. 

Because $T_p^kM = T_pM\otimes \cdots\otimes T_pM$ (k times), the metric $g$ induces a natural, useful, invariant inner product on $T_p^kM$; in terms of covariant components, it is given by
\begin{equation}
\label{metric_inner_prod_tensor}
\langle \mathbf S, \mathbf T\rangle_{g,p} = \sum_{\hati,\hatj \in \D^k} g^{i_1 j_1}\cdots g^{i_k j_k}S_\hati \,T_\hatj\,.
\end{equation}
The corresponding norm will be denoted by $|\mathbf T|_{g,p}$. We will need to compare this inner product, with its metric $g$, to one using the Euclidean metric, which we denote  by  $\delta$, 
where $\delta^{i,j}$ is the Kronecker $\delta$.
Thus, for the Euclidean case, which depends on the chart $(\fraku,\phi)$, we have
\begin{equation}
\label{euclid_inner_prod_tensor}
\langle \mathbf T, \mathbf S\rangle_{\delta,x} 
= 
\sum_{\hati,\hatj \in \D^k}
\delta^{i_1,j_1}\cdots \delta^{i_k,j_k} 
T_{\hati}S_{\hatj} = 
\sum_{\hati  \in \D^k}
   T_{\hati}S_{\hati}\,,
\end{equation}
with $|\mathbf T|_{\delta,x}$ being the corresponding norm. In the sequel, we will need the following lemma comparing lengths of tensors in these two inner products.

\begin{lemma} \label{inner_prod_tensors}
Let $x\in U=\phi(\fraku)$. If $ \Lambda_x(g)$ and $ \lambda_x(g)$ are the largest and smallest eigenvalues of $g^{i,j}(x)$, then
\begin{equation}\label{bounds_tensor_ip}
\lambda_x(g)^k \langle \mathbf T,\mathbf T\rangle_{\delta,x}\le \langle \mathbf T,\mathbf T \rangle_{g,p}  \le \Lambda_x(g)^k \langle  \mathbf T,\mathbf T\rangle_{\delta,x}.
\end{equation}
\end{lemma}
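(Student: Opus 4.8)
The plan is to reduce the statement to a fact about the eigenvalues of the $k$-fold Kronecker power of a symmetric positive definite matrix. Fix $x\in U=\phi(\fraku)$ and write $G=(g^{ij}(x))$, a $d\times d$ symmetric matrix; since it is the inverse of the Gram matrix $(g_{ij}(x))$ it is positive definite, so its eigenvalues $\mu_1,\dots,\mu_d$ are all positive, and by definition $\lambda_x(g)=\min_a\mu_a$, $\Lambda_x(g)=\max_a\mu_a$. Viewing the covariant components $T_\hati$, $\hati\in\D^k$, as the coordinates of a vector in $\reals^{d^k}$, formula \eqref{euclid_inner_prod_tensor} says that $\langle\mathbf T,\mathbf T\rangle_{\delta,x}=\sum_\hati T_\hati^2$ is just the squared Euclidean length, while \eqref{metric_inner_prod_tensor} says that $\langle\mathbf T,\mathbf T\rangle_{g,p}=\sum_{\hati,\hatj}\big(\prod_{\ell=1}^k g^{i_\ell j_\ell}\big)T_\hati T_\hatj$ is the quadratic form associated with $G^{\otimes k}$.

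Next I would diagonalize. Let $\{u_1,\dots,u_d\}$ be an orthonormal eigenbasis of $G$, so that $g^{ij}=\sum_{a=1}^d\mu_a (u_a)_i(u_a)_j$. Substituting this into the product $\prod_{\ell=1}^k g^{i_\ell j_\ell}$ and expanding, one obtains
\begin{equation*}
\langle\mathbf T,\mathbf T\rangle_{g,p}=\sum_{a_1,\dots,a_k=1}^d\Big(\prod_{\ell=1}^k\mu_{a_\ell}\Big)\, c_{a_1\dots a_k}^2,\qquad c_{a_1\dots a_k}:=\sum_{\hati\in\D^k}T_\hati\prod_{\ell=1}^k (u_{a_\ell})_{i_\ell}.
\end{equation*}
The numbers $c_{a_1\dots a_k}$ are precisely the coordinates of $\mathbf T$ in the basis $\{u_{a_1}\otimes\cdots\otimes u_{a_k}\}$ of $\reals^{d^k}$, which is orthonormal because $\{u_a\}$ is; hence by Parseval $\sum_{a_1,\dots,a_k} c_{a_1\dots a_k}^2=\sum_\hati T_\hati^2=\langle\mathbf T,\mathbf T\rangle_{\delta,x}$.

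Finally, since every $\mu_a>0$ we have $\lambda_x(g)^k\le\prod_{\ell=1}^k\mu_{a_\ell}\le\Lambda_x(g)^k$ for every multi-index $(a_1,\dots,a_k)$; inserting these bounds into the displayed identity and using the Parseval identity yields \eqref{bounds_tensor_ip}. There is no genuine obstacle here — the only point requiring care is the bookkeeping with the $k$-fold products of indices, together with the observation that an orthonormal change of basis in each tensor slot induces an orthogonal transformation of $\reals^{d^k}$ and so preserves $\langle\cdot,\cdot\rangle_{\delta,x}$; equivalently, one may simply note that the eigenvalues of $G^{\otimes k}$ are exactly the products $\mu_{a_1}\cdots\mu_{a_k}$ and invoke the min/max characterization of the Rayleigh quotient. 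A slightly more pedestrian alternative is induction on $k$, contracting one tensor slot at a time and using the Cauchy–Schwarz inequality to control the off-diagonal terms, but the spectral argument above is cleaner and avoids any case analysis.
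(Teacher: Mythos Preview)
Your proof is correct and follows essentially the same approach as the paper: both arguments diagonalize $(g^{ij})$ via the spectral theorem, expand the $k$-fold product $\prod_\ell g^{i_\ell j_\ell}$ in terms of eigenvalues and eigenvectors, recognize the resulting expression as $\sum_{a_1,\dots,a_k}\mu_{a_1}\cdots\mu_{a_k}\,c_{a_1\dots a_k}^2$, and then bound the eigenvalue products by $\lambda_x(g)^k$ and $\Lambda_x(g)^k$ while using orthonormality of the tensor-product basis to recover $\langle\mathbf T,\mathbf T\rangle_{\delta,x}$. Your framing in terms of the Kronecker power $G^{\otimes k}$ and Parseval is a slightly cleaner packaging of the same computation the paper carries out explicitly.
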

\begin{proof}
The matrix $g^{i,j}$ is positive definite, and so each eigenvalue $\lambda_m$ is positive. If necessary, we repeat an eigenvalue. Let $\bfu_m$ be the corresponding eigenvector. The set of eigenvectors is assumed to be orthonormal. If  $u_m^i$ is the $i^{th}$ entry in the column $\bfu_m$, the spectral theorem implies that
\[
g^{i,j} = \sum_{m\in \D} \lambda_m u_m^i u_m^j \quad \mbox{and} \quad  \delta^{i,j} = \sum_{m\in \D} u_m^i u_m^j .
\]
Using this in connection with the expression for $\langle  \mathbf T,\mathbf S \rangle_{g,p}$ yields
\begin{eqnarray*}
\langle  \mathbf T,\mathbf T\rangle_{g,p} 
&=& 
\sum_{\substack{(m_1\dots m_k) \\ \in \D^k} }
  \lambda_{m_1} \cdots \lambda_{m_k}  
  \left(
    \sum_{\substack{(i_1\dots i_k) \\ \in \D^k}}
      u_{m_1}^{i_1}\cdots u_{m_k}^{i_k} 
      T_{i_1,\ldots,i_k}
    \right)^2\\
&\ge& 
\lambda_x(g)^k 
\left(
    \sum_{\substack{(i_1\dots i_k) \\ \in \D^k}}
      \sum_{\substack{(j_1\dots j_k) \\ \in \D^k}}
        \sum_{\substack{(m_1\dots m_k) \\ \in \D^k} }
           u_{m_1}^{i_1}u_{m_1}^{j_1}\cdots u_{m_k}^{i_k}u_{m_k}^{j_k}
           T_{i_1,\ldots,i_k}T_{j_1,\ldots,j_k}
\right)\\
&= &
\lambda_x(g)^k 
   \sum_{\substack{(i_1\dots i_k) \\ \in \D^k}}
     \left(T_{i_1,\ldots,i_k}\right)^2 
=
\lambda_x(g)^k \langle  \mathbf T,\mathbf T\rangle_{\delta,x},
\end{eqnarray*}
which establishes the lower bound in (\ref{bounds_tensor_ip}). The upper bound follows in the same way.
\end{proof}

\paragraph{Geodesics} A curve $\gamma(t)$, defined by $(x^j(t))$ in local coordinates, has its tangent vector given by 
$ \dot \gamma =\sum_{j\in \D}\frac{dx^j}{dt}\bfe_j$. 
The arc length of  $\gamma(t)$ is defined by
\[
s_\gamma(\tau) = \int_0^\tau \sqrt{\sum_{i,j=1}^d  g_{ij} \frac{dx^i}{dt} \frac{dx^j}{dt}}d t .
\]
If $p,q$ are points in $\M$, then the \emph{distance} between $p$ and $q$, $\d(p,q)$, is defined to be the infimum of the length of all piecewise differentiable curves joining $p$ and $q$ \cite[\S7.2]{doC1}.

Extremals of the arc length functional are called \emph{geodesics}. If we use the arc length 
$s$ as the parameter (i.e., $t\to s$), then the Euler-Lagrange equations for 
$x^k(s)$ are the following second order, nonlinear differential equations:
\begin{equation}
\label{geodesic_difeq}
\frac{d^2 x^k}{ds^2} +  
\sum_{i,j=1}^d\Gamma^k_{ij} \frac{dx^i}{ds}\frac{dx^j}{ds} = 0.
\end{equation}
\label{christoffell_def}
The $\Gamma^k_{ij}$ are called \emph{Christoffel symbols}; they are given by
\begin{equation}
\Gamma^k_{ij} = \frac12 \sum_{m\in \D}g^{km} \left( \frac{\partial g_{jm}}{\partial x^i} + \frac{\partial g_{im}}{\partial x^j} - \frac{\partial g_{ij}}{\partial x^m} \right).
\end{equation}

A geodesic solving (\ref{geodesic_difeq}) is specified by giving an initial point 
$p\in \M$, whose coordinates we may take to be $\bigl(x^1 (0),\dots,x^d (0)\bigr)=0$, 
together with a tangent vector  $\bft_p$ having components $\frac{dx^i}{ds}(0)$. A Riemannian manifold is said to be \emph{complete} if the geodesics are defined for all values of the parameter $s$. All compact Riemannian manifolds are complete. $\reals^d$ is complete.

\paragraph{Exponential map} We define the \emph{exponential map} $\Exp_p: T_pM \to \M$ by letting 
$\Exp_p(0)=p$ and $\Exp_p(s\bft_p)=\gamma_p(s)$, 
where $\gamma_p(s)$ is the unique geodesic that passes through $p$ for $s=0$ 
and has a tangent vector $\dot \gamma_p(0)=\bft_p$ of length 1; i.e.,  
$\langle \bft_p,\bft_p\rangle_{gp}.=1$.  By the Hopf-Rinow Theorem \cite[\S 7.2]{doC1} $\M$ is complete if and only if $\Exp_p$ is defined on all of $T_pM$. Other important consequences of this theorem are that $\M$ is a complete metric space under the distance $\d(p,q)$, that any two points $p,q\in \M$ are connected via a geodesic of minimum length $\d(p,q)$, and that the exponential map is defined on the whole tangent bundle $TM$; as such, it is a smooth function of both arguments \cite[\S 9.3]{hicks1965}.

Although geodesics having different initial, non-parallel unit tangent vectors $\bft_p=\dot \gamma_p(0)$ may eventually intersect, there will always be a neighborhood $\fraku_p$ of $p$ where they do not.  In $\fraku_p$, the initial direction $\bft_p$,  together with the arc length $s$, uniquely specify a point $q $ via $q=\gamma_p(s)$, and the exponential map $\Exp_p$ is a diffeomorphism between the corresponding neighborhoods of $0$ in  $T_pM$ and $p$ in $M$. In particular, there will be a largest ball $B(0,\mathrm{r}_p)\in T_pM$ about the origin in $T_pM$ such that $\Exp_p: B(0,\mathrm{r}_p)\to \b(p,\mathrm{r}_p)\subset \M$ is injective and thus a diffeomorphism; $\mathrm{r}_p$ is called \emph{injectivity radius} for $p$. By choosing cartesian coordinates on $B(0,\mathrm{r}_p)$, with origin $0$, and using the exponential map, we can parametrize $\M$ in a neighborhood of $p$ via $q=\Exp_p(x)$, $x\in T_pM$. Finally, any chart with $\fraku=\exp_p(B(0,\mathrm r)$, $\mathrm{r} 
 \le \mathrm{r}_p$ and $\phi=\Exp_p^{-1}$ is called a \emph{normal chart} of radius $\mathrm{r}$  and corresponding cartesian coordinates are called a \emph{normal} coordinates. 

The \emph{injectivity radius} of $\M$ is  $\inj:=\inf_{p\in M}\mathrm{r}_p$. If $0<\inj\le \infty$, the manifold is said to have \emph{positive} injectivity radius. All compact Riemannian manifolds without boundary are both complete and have positive injectivity radius. On the other hand, many noncompact Riemannian manifolds, including the simple case of $\reals^d$, also have positive injectivity radii. In fact, the injectivity radius of $\reals^d$ is $\infty$. It is worth pointing out that manifolds having $\inj>0$ are always complete \cite[\S 1.1]{eichhorn2007}

We make special note of the fact that, for a compact Riemannian manifold, the family of exponential maps are uniformly isomorphic; i.e., there are constants $0<\Gamma_1 \le \Gamma_2<\infty$ so that
for every $p_0\in \M$ and every $x,y\in B(0,\mathrm r)$
\begin{equation}\label{isometry}
 \Gamma_1|x-y|\le \d(\Exp_{p_0}(x), \Exp_{p_0}(y)) \le \Gamma_2 |x-y|.
\end{equation}
We will have more to say about this later in Remark~\ref{det_dist_normal}.

\subsection{Covariant derivative bounds}\label{covariant_bounds}
We now turn to the concept of higher order covariant differentiation on manifolds. Such derivatives are intrinsic; they are used to define Sobolev spaces on measurable subsets of $\M$. After relating  covariant derivatives and ordinary partial derivatives in equations (\ref{kth_order_covariant_deriv_tensor}) and (\ref{kth_order_derivatives}), we obtain bounds that will be used  to prove Lemma \ref{Fran}, which is a key result 
concerning local norm equivalence between Sobolev spaces on the manifold and 
Sobolev spaces on normal charts.

The \emph{covariant derivative}, or \emph{connection}, $\nabla$ associated with $(\M,g)$ is defined as follows. Let 
$\bfv=\sum_{j=1}^d v^j\bfe_j$ and 
$\bfw=\sum_{j=1}^d w^j\bfe_j$ be vector fields. 
The covariant derivative of $\bfw$ in the \emph{direction} of the vector field $\bfv$ is given in terms of the Christoffel symbols,
\[
\nabla_\bfv \bfw := 
\sum_{k \in \D} 
  \sum_{i\in \D}
    \bigl[
    (v^i\frac{\partial w^k}{\partial x^i} + 
    \sum_{j\in \D}
      \Gamma^k_{ij}v^iw^j)\bigr]\bfe_k\,,
\]
and is itself a vector field. If we use a covariant vector field, $\bfw = \sum_{k\in \D}w_k\bfe^k$, then 
\[
\nabla_\bfv \bfw  
= 
\sum_{k \in \D} 
  \sum_{i\in \D}
    \bigl[(v^i\frac{\partial w_k}{\partial x^i} - 
      \sum_{r\in \D} \Gamma^r_{ik}v^i w_r)
    \bigr]\bfe^k\,.
\]
This is a directional derivative. The appropriate ``gradient'' is a rank 2 tensor,
\[
\nabla \bfw 
= 
  \sum_{(k,i)\in \D^2}
    \left(
      \frac{\partial w_k}{\partial x^i} - 
      \sum_{s\in \D}
        \Gamma^s_{ik}w_s
    \right)
    \bfe^k\otimes\bfe^i
\]
Covariant derivatives may be defined for any type of tensor field -- covariant, contravariant, or mixed. For instance, if $\mathbf T$ is the order $k$ (covariant) tensor defined by
\[
\mathbf T
=
\sum_{\hati  \in  \D^k}T_{\hati} \bfe^{i_1}\otimes \cdots \otimes \bfe^{i_k},
\]
then the covariant derivative of $\mathbf T$ is 
\[
\nabla  \mathbf T 
= 
\sum_{j\in\D}
\sum_{\hati \in  \D^k}
\left(
  \frac{\partial T_{\hati}}{\partial x^j} 
  - 
  \sum _{r=1}^k \sum_{s\in \D}
    \Gamma^s_{j, i_r} T_{i_1,\ldots,i_{r-1},s, i_{r+1},\ldots, i_k}
\right) 
\bfe^{i_1}\otimes \cdots \otimes \bfe^{i_k}\otimes \bfe^j,
\]
which is an order $k+1$ covariant tensor. Of course, if $T = f$ is a scalar valued function, then $\nabla f = \sum_{j\in \D} \frac{\partial f}{\partial x^j}\bfe^j$, (or, more precisely, $\nabla f = \sum_{j\in \D} \frac{\partial f\circ\phi^{-1}}{\partial x^j}\bfe^j$) which is just the usual expression for the gradient. 
The ``Hessian'' is $\nabla^2 f $; it has the form
\[
\nabla^2 f 
=
\sum_{(i,j)\in\D^2}
\left(
  \frac{\partial^2 f}{\partial x^i \partial x^j} 
  - 
  \sum_{k\in \D}
  \Gamma^k_{i,j}\frac{\partial f}{\partial x^k}
\right)
\bfe^i\otimes \bfe^j.
\]
The components of the $k^{th}$ covariant derivative of $f$ have the form
\begin{equation}
\label{kth_order_covariant_deriv_tensor}
(\nabla^k f(x))_{\hati} 
= 
(\partial^k f(x))_{\hati} 
+ 
\sum_{m=1}^{k-1}
  \sum_{\hatj \in \D^m}
    A_{\hati}^{\hatj}(x) (\partial^m f(x))_{\hatj} 
\end{equation}
where 
\[
(\partial^m f)_{\hatj} := \frac{\partial^m }{\partial x^{j_1}\cdots\partial  x^{j_m} } f\circ \phi^{-1} ,
\]
and where the coefficients $x\mapsto A_{\hati}^{\hatj}(x)$ depend on the Christoffel symbols and their derivatives to order $k-1$, and, hence, are smooth in $x$.  This can also be written in standard multi-index notation. Let $\alpha_1, \alpha_2,\ldots, \alpha_d$ be the number of repetitions of $1,2,\ldots, d$ in $\hatj $, and let $\alpha = (\alpha_1,\ldots,\alpha_d)$. Then,
\begin{equation}
\label{multi_index_superscript_notation}
(\partial^m f)_{\hatj} :=\frac{\partial^m }{\partial (x^{1})^{\alpha_1}\cdots\partial  (x^{d})^{\alpha_d} } f\circ \phi^{-1} =:  D^{|\alpha |}_\alpha f\circ \phi^{-1} , \  |\alpha | =\sum_{k=1}^d \alpha_k =m.
\end{equation}

Unlike ordinary partial derivatives $(\partial^k f(x))_{\hati}$ or even the Christoffel symbols, the components $(\nabla^k f(x))_{\hati}$ (and, of course, those of $\nabla \mathbf T$) transform \emph{tensorially} under a change of coordinates, so that $\nabla^k f$ is independent of the coordinate system. This was a discovery of Levi-Civita \cite[\S 2.3]{doC1}; it is why the covariant derivative $\nabla$ is an important improvement on the partial derivative  $\partial$.

Even though the partials are not invariant, we can express them in terms of covariant derivative components, provided we use local coordinates. Because of the relationship between the highest order derivative in (\ref{kth_order_covariant_deriv_tensor}) and the corresponding covariant derivative's component, we can solve for the highest derivative in terms of covariant derivatives:
\begin{equation}
\label{kth_order_derivatives}
 (\partial^k f(x))_{\hati}  = 
(\nabla^k f(x))_{\hati} + 
\sum_{m=1}^{k-1}
  \sum_{\hatj \in \D^m}
    B_{\hati}^{\hatj} (x)(\nabla^m f(x))_{j_1,\ldots,j_m}.
\end{equation}

The expressions in (\ref{kth_order_covariant_deriv_tensor})  and (\ref{kth_order_derivatives}) are needed to obtain bounds on $\langle \nabla^kf,\nabla^kf \rangle_{g,p}$. These bounds will prove useful in estimating certain invariant Sobolev norms using local coordinates. They are given below.

\begin{proposition} 
\label{pointwise_ineq} 
Let $(\fraku,\phi)$ be a chart, with $U=\phi(\fraku)$. For every $x=\phi(p)\in U$, there exist multivariate polynomials $\tilde c_U(g,k)\ge 1$ and $\tilde C_U(g,k)\ge 1$ in $g_{ij}(x)$, $g^{ij}(x)$ and their derivatives up to order $k$ such that
\begin{equation}
\label{euclidean_bnds_U}
 | \nabla^k f|_{\delta,x}^2 \le \tilde C_U(g,k)\sum_{m=1}^k | \partial^m f |_{\delta,x}^2 \ \text{ and }\  
  | \partial^k f |_{\delta,x}^2 \le \tilde c_U(g,k)\sum_{m=1}^k | \nabla^m f |_{\delta, x}^2 
\end{equation}
where $|\cdot |_{\delta,x}$  is the norm associated with the inner product in (\ref{euclid_inner_prod_tensor}).  In addition, we have that
\begin{equation}
\label{covariant_bnds_U}
c_U(g,k)\sum_{m=1}^k | \partial^m f |_{\delta,x}^2\le \sum_{m=1}^k | \nabla^m f |_{g,p}^2 \le C_U(g,k)\sum_{m=1}^k  | \partial^m f |_{\delta,x}^2,
\end{equation}
where $c_U(g,k)=\big(\sum_{m=1}^k\min(1,\lambda_x(g))^{-m} \tilde c_U(g,m))\big)^{-1}$ and $C_U(g,k)=\sum_{m=1}^k \Lambda_x(g)^m \tilde C_U(g,m))$; $\lambda_x(g)$ and $\Lambda_x(g)$ are the smallest and largest eigenvalues of $g^{ij}(x)$. 
\end{proposition}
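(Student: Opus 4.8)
The plan is to derive the Euclidean comparisons (\ref{euclidean_bnds_U}) directly from the expansions (\ref{kth_order_covariant_deriv_tensor}) and (\ref{kth_order_derivatives}) by elementary triangle-inequality and Cauchy--Schwarz estimates, and then obtain (\ref{covariant_bnds_U}) by feeding these into Lemma~\ref{inner_prod_tensors} and summing carefully so that the precise constants $c_U,C_U$ come out.

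First I would fix $x=\phi(p)\in U$ and prove the first inequality of (\ref{euclidean_bnds_U}). Starting from (\ref{kth_order_covariant_deriv_tensor}), for each $\hati\in\D^k$ write $(\nabla^k f)_{\hati}=(\partial^k f)_{\hati}+\sum_{m=1}^{k-1}\sum_{\hatj\in\D^m}A_{\hati}^{\hatj}(x)(\partial^m f)_{\hatj}$, apply $(a+b)^2\le 2a^2+2b^2$, and then Cauchy--Schwarz to the double sum to get $|(\nabla^k f)_{\hati}|^2\le 2|(\partial^k f)_{\hati}|^2+2\big(\sum_{m=1}^{k-1}\sum_{\hatj}|A_{\hati}^{\hatj}(x)|^2\big)\sum_{m=1}^{k-1}|\partial^m f|_{\delta,x}^2$. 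Summing over $\hati\in\D^k$ gives the first inequality of (\ref{euclidean_bnds_U}) with $\tilde C_U(g,k):=2+2\sum_{\hati\in\D^k}\sum_{m=1}^{k-1}\sum_{\hatj\in\D^m}|A_{\hati}^{\hatj}(x)|^2$; since the $A_{\hati}^{\hatj}$ are polynomials in the Christoffel symbols and their derivatives up to order $k-1$ --- hence in $g_{ij}(x),g^{ij}(x)$ and their derivatives up to order $k$, via the defining formula for $\Gamma^k_{ij}$ --- this $\tilde C_U(g,k)$ is such a polynomial, and the additive $2$ forces $\tilde C_U(g,k)\ge 1$. The second inequality of (\ref{euclidean_bnds_U}) is proved identically, running off (\ref{kth_order_derivatives}) and the coefficients $B_{\hati}^{\hatj}$ in place of the $A_{\hati}^{\hatj}$, and defining $\tilde c_U(g,k)$ analogously.

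Next I would pass to (\ref{covariant_bnds_U}). Since $\nabla^m f$ is an order-$m$ covariant tensor, Lemma~\ref{inner_prod_tensors} (with exponent $m$, not $k$) gives $\lambda_x(g)^m|\nabla^m f|_{\delta,x}^2\le |\nabla^m f|_{g,p}^2\le \Lambda_x(g)^m|\nabla^m f|_{\delta,x}^2$. For the upper bound, chaining the right inequality with the first inequality of (\ref{euclidean_bnds_U}) yields $\sum_{m=1}^k|\nabla^m f|_{g,p}^2\le \sum_{m=1}^k\Lambda_x(g)^m\tilde C_U(g,m)\sum_{j=1}^m|\partial^j f|_{\delta,x}^2\le \big(\sum_{m=1}^k\Lambda_x(g)^m\tilde C_U(g,m)\big)\sum_{j=1}^k|\partial^j f|_{\delta,x}^2$, which is exactly $C_U(g,k)$. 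For the lower bound, note first $|\nabla^j f|_{\delta,x}^2\le \lambda_x(g)^{-j}|\nabla^j f|_{g,p}^2\le \min(1,\lambda_x(g))^{-j}|\nabla^j f|_{g,p}^2$ and that $m\mapsto\min(1,\lambda_x(g))^{-m}$ is non-decreasing; then from the second inequality of (\ref{euclidean_bnds_U}), $\sum_{m=1}^k|\partial^m f|_{\delta,x}^2\le \sum_{m=1}^k\tilde c_U(g,m)\sum_{j=1}^m|\nabla^j f|_{\delta,x}^2\le \sum_{m=1}^k\tilde c_U(g,m)\min(1,\lambda_x(g))^{-m}\sum_{j=1}^k|\nabla^j f|_{g,p}^2$, which is precisely $c_U(g,k)^{-1}=\sum_{m=1}^k\min(1,\lambda_x(g))^{-m}\tilde c_U(g,m)$.

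There is no genuine obstacle --- this is a bookkeeping lemma --- but the step requiring the most care is matching the exact forms of $c_U(g,k)$ and $C_U(g,k)$: one must sum the order-$k$ inequalities of (\ref{euclidean_bnds_U}) in the right order (an outer $\sum_{m=1}^k$ wrapped around an inner $\sum_{j=1}^m$, then enlarged to $\sum_{j=1}^k$) and use the monotonicity of $\min(1,\lambda_x(g))^{-m}$ rather than the cruder $\min(1,\lambda_x(g))^{-k}$, while keeping straight that $\nabla^m f$ contributes the exponent $m$ in Lemma~\ref{inner_prod_tensors}. A secondary point, to be dispatched once, is verifying that $\tilde c_U,\tilde C_U$ --- assembled from $|A_{\hati}^{\hatj}|^2,|B_{\hati}^{\hatj}|^2$ --- are polynomials in $g_{ij},g^{ij}$ and their derivatives of order at most $k$; this is immediate by tracking the recursion defining $A_{\hati}^{\hatj}$ and $B_{\hati}^{\hatj}$ through the formula for $\Gamma^k_{ij}$.
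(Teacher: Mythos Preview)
Your proposal is correct and follows essentially the same approach as the paper: both derive (\ref{euclidean_bnds_U}) from the expansions (\ref{kth_order_covariant_deriv_tensor}) and (\ref{kth_order_derivatives}) via Cauchy--Schwarz (the paper absorbs the leading coefficient~$1$ into a single Cauchy--Schwarz step, yielding $A_{\hati}^2:=1+\sum_{m,\hatj}(A_{\hati}^{\hatj})^2$, whereas you split with $(a+b)^2\le 2a^2+2b^2$ first, a cosmetic difference), and then obtain (\ref{covariant_bnds_U}) from Lemma~\ref{inner_prod_tensors}. Your write-up is in fact more explicit than the paper's about how the summations produce the stated constants $c_U(g,k)$ and $C_U(g,k)$.
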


\begin{proof}
From (\ref{kth_order_covariant_deriv_tensor}) and Schwarz's inequality, we see that
\begin{equation*}
|(\nabla^k f)_{\hati}|^2 \le A_{\hati}^2\left(|(\partial^k f)_{i_1,\ldots,i_k}|^2 +\sum_{m=1}^{k-1} | \partial^m f |_{\delta,x}^2\right) 
\end{equation*}
where $A_{\hati}^2 := 1+ \sum_{m=1}^{k-1}\sum_{\hatj\in\D^m}(A_{\hati}^{\hatj})^2$. 
Summing over the $i$ indices yields
\begin{equation}
 | \nabla^k f|_{\delta,x}^2 \le C_U(g,k) \sum_{m=1}^k | \partial^m f |_{\delta,x}^2,\ \tilde C_(g,k): = \sum_{\hati \in \D^k} A_{\hati}^2.
\end{equation}
Examining the coefficients in (\ref{kth_order_covariant_deriv_tensor}) shows that $C_U(g,k)$ is a polynomial in the Christoffel symbols and their derivatives. But these are linear combinations of derivatives and multiples of the metric components, so $\tilde C_U(g,k)$ is a polynomial in $g^{ij}$, $g_{ij}$ and their $x$ derivatives. Repeating the procedure with (\ref{kth_order_derivatives}) gives the result for $ | \partial^k f |_{\delta,x}^2$. The inequalities in (\ref{covariant_bnds}) follow from (\ref{euclidean_bnds}) and Lemma~\ref{inner_prod_tensors}.
\end{proof}

Let $q\in \M$. Suppose $0<\mathrm r<\inj$ and sufficiently small that, in a fixed chart $(\fraku,\phi)$, $\fraku$ contains a neighborhood of $q$ and all of the associated geodesic balls $\b(q,\mathrm r)$. As we mentioned earlier, the exponential map is a smooth on the tangent bundle. Consequently, if we let $x,y=\phi(q),\in U$,  then $p(x,y)=\Exp_{\phi^{-1}(y)}(x)\in \M$ is smooth in both $x$ and $y$. If $q$ is allowed to vary, then in a normal chart $(\b(q,\mathrm r), \Exp_q^{-1})$ the entries of the metric tensor $g_{i,j}$, its maximum and minimum eigenvalues, its determinant, and the Christoffel symbols vary smoothly in both $y=\phi(q)$ as well as $x$. They are thus uniformly continuous on the closure of $\cup_q \b(q,r)$, and are bounded there independently of $q$ and $x$. Since we can cover 
the compact manifold $\M$ with a finite number of such neighborhoods, the boundedness of the quantities associated with the metric tensor hold \emph{uniformly} for any normal coordinate system of radius $\mathrm r$. 

The manifold $\M$ doesn't have to be compact for the metric and related quantities, when expressed in normal coordinates on $\b(q,\mathrm r)$, to be bounded uniformly in $q$ by constants depending only on $\mathrm r$. This also holds for noncompact $C^\infty$ Riemannian manifolds having \emph{bounded geometry}, which means that they have positive injectivity radius and that for all $k\ge 0$, the curvature tensor $R$ satisfies $\sup_{p\in \M} |\nabla^kR|_{g,p}\le C_k$, where $C_k$ depends only on $k$ \cite{cheeger_etal1982, eichhorn1991, eichhorn2007, triebel1992}. Compact manifolds fall into this class, as does the space $\reals^d$.  The implication that is important here is the following result.

\begin{corollary}\label{chart_dependence}
Let $\M$ be a compact manifold, or, more generally, have bounded geometry. In addition, suppose $0<\mathrm r <\inj$ and $\fraku=\b(q,\mathrm r)$ and $\phi=\Exp_q^{-1}$. Then there are positive constants $\tilde c(\mathrm r,k)$, $\tilde C(\mathrm r,k)$, $c(\mathrm r,k)$, and $C(\mathrm r,k)$ such the following bounds hold on $\Exp_q^{-1}(\b(q,\mathrm r))$ independently of $q$:
\begin{equation}
\label{euclidean_bnds}
 | \nabla^k f|_{\delta,x}^2 \le \tilde C(\mathrm r,k)\sum_{m=1}^k | \partial^m f |_{\delta,x}^2 \ \text{ and }\  
  | \partial^k f |_{\delta,x}^2 \le \tilde c(\mathrm r,k)\sum_{m=1}^k | \nabla^m f |_{\delta, x}^2,
\end{equation}
where $|\cdot |_{\delta,x}$  is the norm associated with the inner product in (\ref{euclid_inner_prod_tensor}), and 
\begin{equation}
\label{covariant_bnds}
c(\mathrm r,k)\sum_{m=1}^k | \partial^m f |_{\delta,x}^2\le \sum_{m=1}^k | \nabla^m f |_{g,p}^2 \le C(\mathrm r,k)\sum_{m=1}^k  | \partial^m f |_{\delta,x}^2.
\end{equation}
\end{corollary}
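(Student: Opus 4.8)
The plan is to deduce Corollary~\ref{chart_dependence} directly from Proposition~\ref{pointwise_ineq} by showing that, when the chart is a \emph{normal} chart $(\b(q,\mathrm r),\Exp_q^{-1})$, the chart-dependent polynomials $\tilde c_U(g,k)$, $\tilde C_U(g,k)$, $c_U(g,k)$, $C_U(g,k)$ can be replaced by constants that no longer depend on $q$. In other words, the only thing to prove is that one may take a supremum over $q\in\M$ (or over all base points, in the bounded-geometry case) and keep the resulting quantities finite. So I would set, for instance, $\tilde C(\mathrm r,k):=\sup_{q\in\M}\ \sup_{x\in \Exp_q^{-1}(\b(q,\mathrm r))}\tilde C_{U_q}(g,k)(x)$ with $U_q=\Exp_q^{-1}(\b(q,\mathrm r))$, and similarly for the other three; then (\ref{euclidean_bnds}) and (\ref{covariant_bnds}) follow pointwise from (\ref{euclidean_bnds_U}) and (\ref{covariant_bnds_U}), using that each is monotone in the relevant constant (and that $c(\mathrm r,k)$ is built from $\tilde c(\mathrm r,k)$ and $\min(1,\lambda_x(g))$ exactly as $c_U$ is built from $\tilde c_U$, so I take the infimum of $\lambda_x(g)$ over the same set). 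Thus the corollary is really a uniformity statement, not a fresh computation.

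The substance, therefore, is the finiteness of these suprema, and the excerpt has essentially already supplied the argument in the paragraph immediately preceding the corollary. I would spell it out as follows. Fix $\mathrm r<\inj$. The map $(q,x)\mapsto p(q,x):=\Exp_q(x)$ is $C^\infty$ on the portion of the tangent bundle $\{(q,x): q\in\M,\ |x|\le \mathrm r\}$ (this is the smoothness of the exponential map on $TM$ quoted from \cite{hicks1965}), so in the normal chart at $q$ the metric components $g_{ij}$, the inverse $g^{ij}$, the eigenvalues $\lambda_x(g),\Lambda_x(g)$, and the Christoffel symbols together with all their $x$-derivatives up to any fixed order are jointly continuous in $(q,x)$. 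Since $\tilde C_U(g,k)$ etc.\ are fixed polynomials in these quantities, they too are jointly continuous in $(q,x)$ on the set $\{(q,x): q\in\M,\ |x|\le \mathrm r\}$. For $\M$ compact this set is compact, so each such function attains a finite maximum and a strictly positive minimum (the minimum of $\lambda_x(g)$ is positive because $g^{ij}$ is positive definite at every point), and the suprema/infima above are finite and positive. This gives the corollary for compact $\M$.

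For the bounded-geometry case the tangent-bundle set above is no longer compact, so I would instead invoke the standard fact (from \cite{cheeger_etal1982, eichhorn1991, eichhorn2007, triebel1992}, and already asserted in the excerpt) that bounded geometry is \emph{precisely} the hypothesis guaranteeing that in normal coordinates of a fixed radius $\mathrm r<\inj$ the metric, its inverse, and the Christoffel symbols, together with all derivatives to each fixed order, are bounded above uniformly in the base point, with $g^{ij}$ bounded below away from $0$ uniformly as well (uniform local ellipticity). Plugging these uniform bounds into the fixed polynomials $\tilde C_U,\tilde c_U$ and into the formulas for $c_U,C_U$ yields the uniform constants, and $\reals^d$ is the trivial instance where the metric is constant. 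The only mild subtlety — and the step I would be most careful about — is bookkeeping: making sure the polynomial $C_U(g,k)$ in (\ref{covariant_bnds_U}) really is controlled by $\Lambda_x(g)$ and the $\tilde C_U(g,m)$ for $m\le k$ (so that an \emph{upper} bound on $\Lambda_x$ and on the metric-derivative data suffices), while $c_U(g,k)$ needs a \emph{lower} bound on $\lambda_x(g)$, i.e.\ an upper bound on $\|g^{ij}(x)\|$ together with a lower bound on its determinant; both are furnished by the uniform two-sided control just quoted. Once these dependencies are tracked correctly, the corollary is immediate, and no genuinely new estimate beyond Proposition~\ref{pointwise_ineq} is needed.
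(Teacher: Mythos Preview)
Your proposal is correct and follows essentially the same approach as the paper: both argue that the chart-dependent constants of Proposition~\ref{pointwise_ineq} are polynomials in the metric data, which in normal coordinates are uniformly controlled in the base point (by compactness or by the bounded-geometry hypothesis), so one may pass to $q$-independent suprema/infima. Your write-up is simply more explicit about the compactness argument and the bookkeeping for the lower bound on $\lambda_x(g)$ needed for $c(\mathrm r,k)$, which the paper leaves implicit.
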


\begin{proof}
All of the constants in $ \tilde c_U(g,m))$ and  $\tilde C_U(g,k)$ in (\ref{euclidean_bnds_U}) are polynomials in various quantities associated with $g_{ij}$. Since these quantities are all uniformly bounded by functions of $\mathrm r$, we see that there are new constants $\tilde c(\mathrm r,k)$ and $\tilde C(\mathrm r,k)$ such that $\tilde c_U(g,k) \le \tilde c(\mathrm r,k)$ and $\tilde C_U(g,k) \le \tilde C(\mathrm r,k)$. The bounds in (\ref{euclidean_bnds}) follow similarly.
\end{proof}

\begin{remark}\label{det_dist_normal}
{\rm From what we said earlier, we note that in a normal chart of radius $\mathrm r<\inj$, the measure on $\M$ has the form $d\mu = \sqrt{\det(g_{ij})} dx^1\cdots dx^d$. If $\M$ is compact or has bounded geometry, then there are positive constants $c_1(r),c_2(r)$ such that $c_1(\mathrm r)\le \sqrt{\det(g_{ij})}\le c_2(\mathrm r)$ on $B(0,\mathrm r)$.  We also point out that, for similar reasons, $\Gamma_1$ and $\Gamma_2$ in (\ref{isometry}) depend only on $\mathrm r$, and not the center $p_0$.}
\end{remark}

\section{Sobolev spaces on subsets of $\M$}\label{sobolev_manf}
Sobolev spaces on subsets of a Riemannian manifold can be defined in an invariant way, using covariant derivatives \cite{Aub}. In defining them, we will need to make use of the spaces $L_\sfp$, $L_\sfq$. To avoid problems with notation, we will use the sans-serif letters $\sfp$, $\sfq$, rather than $p$, $q$, as subscripts. Here is the definition.
\begin{definition}\label{Sob_Norm}
Let $\Omega\subset \man$ be a measurable subset. For all $1\le \sfp \le \infty$, we define the Sobolev space $W_\sfp^m(\Omega)$ to be all $f:\M \to \reals$ such that $ | \nabla f |_{g,p} $ in $L_\sfp$. The associated norms are as follows:
%
%
\begin{equation}\label{def_spn} 
\|f\|_{W_\sfp^m(\Omega)}:= 
\left\{
\begin{array}{cl}
\left( 
  \sum_{k=0}^m
  \int_{\Omega} 
    | \nabla^k f |_{g,p}^\sfp
  \, \dif \mu(p)\right)^{1/\sfp}, & 1\le \sfp <\infty; \\ [5pt]
  \max_{\,0\le k\le m} \bigl\|  | \nabla^k f |_{g,p} \bigr\|_{L_\infty(\Omega)}, & \sfp=\infty.
\end{array}
  \right.
\end{equation}
When $\sfp=2$, the norm comes from the Sobolev inner product
\begin{equation}\label{def_sn} 
\langle f, g\rangle_{m,\Omega}:=\langle f,g\rangle_{W_2^m(\Omega)}:= 
  \sum_{k=0}^m
  \int_{\Omega} 
    \left\langle
      \nabla^k f,  \nabla^k g
    \right\rangle_{g,p}
  \, \dif \mu(p).
\end{equation}
We also write the $\sfp=2$ Sobolev norm as $\|f\|_{m,\Omega}^2 := \langle f,f\rangle_{m,\Omega}$.
When $\Omega=\man$, we may suppress the domain: $\langle f,g\rangle_m = \langle f,g\rangle_{m,\M}$ and $\|f\|_m=\| f\|_{m,\M}$. 
\end{definition}
When $\Omega$ is a region in Euclidean space $\reals^d$, the definitions above are equivalent to the standard ones (cf. \cite[Chapter III]{Adams-75-1}). In particular, when $\sfp=2$, the definitions coincide. Furthermore, if the $\sfp=2$, and the $\reals^d$ norms are expressed in the multi-index notation in (\ref{multi_index_superscript_notation}), then they have the form
\[
\|f\|_{W_\sfp^m\Omega)}^p = \sum_{|\alpha|\le m}\binom{|\alpha|}{\alpha} \|D^{\alpha}f\|_{L_\sfp(\Omega)}^\sfp.
\]

\subsection{Metric equivalences}\label{metric_equiv}
There is a  metric equivalence between the $\reals^d$-Sobolev norm on $\Omega \subset B(0,r)$, where $r<\inj$ and the $\M$-Sobolev norm on the image $\Exp_{p_0}(\Omega)\subset \b(p_0,r)$, $p_0\in \M$.   In particular, the following lemma shows that the exponential maps induce operators $f\mapsto f\circ \Exp_{p_0}$ that are boundedly invertible from all spaces $W_\sfp^m(\Exp_p(\Omega))$ to $W_\sfp^m(\Omega)$. This is especially important for us because the constants involved are dependent only on $r$, and not on $p_0$; that is, the equivalence between these spaces is independent of $p_0$.
\begin{lemma}\label{Fran}
For $m\in\nats$ and $0<r<\inj$,  there are constants $0 < c_1 <c_2$ so that
 for any measurable $\Omega \subset B_{r}$, for all $j\in \nats$, $j\le m$, and for any $p_0\in \M$, 
 the equivalence
$$
c_1\| u\circ\Exp_{p_0}\|_{W_\sfp^j(\Omega)}\le\|u\|_{W_\sfp^j(\Exp_{p_0}(\Omega))}\le c_2\| u\circ\Exp_{p_0}\|_{W_\sfp^j(\Omega)}
$$
holds for all $u:\mathrm{\Exp}_{p_0}(\Omega)\to \reals$. The constants $c_1$ and $c_2$ depend on $r$, $m$ and $\sfp$, but they are \emph{independent} of $\Omega$ and $p_0$.
\end{lemma}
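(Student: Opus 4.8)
The plan is to reduce the manifold Sobolev norm on $\Exp_{p_0}(\Omega)$ to the Euclidean Sobolev norm of the pulled-back function $u\circ\Exp_{p_0}$ on $\Omega$, using the pointwise comparisons already assembled in the excerpt. First I would fix $p_0\in\M$ and work in the normal chart $(\b(p_0,r),\Exp_{p_0}^{-1})$, so that $\phi=\Exp_{p_0}^{-1}$ and $U=B(0,r)\supset\Omega$. Write $v:=u\circ\Exp_{p_0}$, a function on $\Omega\subset\reals^d$. For $p=\Exp_{p_0}(x)$, the invariant norm $|\nabla^k u|_{g,p}$ is, by definition, computed from the covariant components of $\nabla^k u$, and Corollary~\ref{chart_dependence} (equation~\eqref{covariant_bnds}) gives, uniformly in $p_0$ and $x\in B(0,r)$,
\[
c(r,k)\sum_{m=1}^k |\partial^m v|_{\delta,x}^2 \le \sum_{m=1}^k |\nabla^m u|_{g,p}^2 \le C(r,k)\sum_{m=1}^k |\partial^m v|_{\delta,x}^2 .
\]
Here $|\partial^m v|_{\delta,x}^2=\sum_{\hati\in\D^m}|(\partial^m v)_{\hati}|^2$, which — after the multinomial bookkeeping in \eqref{multi_index_superscript_notation} — is comparable (with purely dimensional constants) to $\sum_{|\alpha|=m}\binom{m}{\alpha}|D^\alpha v(x)|^2$, i.e. to the integrand of the Euclidean seminorm $|v|_{m,\Omega}^2$. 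So pointwise we have a two-sided bound between $\sum_{k\le j}|\nabla^k u|_{g,p}^2$ and $\sum_{k\le j}|\partial^k v|_{\delta,x}^2$ with constants depending only on $r$ and $j$.

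Next I would integrate. For $1\le\sfp<\infty$ one has $\|u\|_{W_\sfp^j(\Exp_{p_0}(\Omega))}^\sfp=\sum_{k=0}^j\int_{\Exp_{p_0}(\Omega)}|\nabla^k u|_{g,p}^\sfp\,\dif\mu(p)$. Changing variables via $p=\Exp_{p_0}(x)$, the measure becomes $\dif\mu=\sqrt{\det(g_{ij}(x))}\,\dif x$, and Remark~\ref{det_dist_normal} gives $c_1(r)\le\sqrt{\det(g_{ij})}\le c_2(r)$ uniformly in $p_0$ on $B(0,r)$. Combining this with the pointwise comparison above (raised to the power $\sfp/2$, which only changes the constants by fixed powers and a dimensional factor absorbing the passage from $\ell_2$ to $\ell_\sfp$ sums over the finitely many $k\le j$), one obtains
\[
c_1\|v\|_{W_\sfp^j(\Omega)}^\sfp \le \|u\|_{W_\sfp^j(\Exp_{p_0}(\Omega))}^\sfp \le c_2\|v\|_{W_\sfp^j(\Omega)}^\sfp ,
\]
with $c_1,c_2$ depending only on $r$, $m$ (an upper bound for $j$) and $\sfp$. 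The case $\sfp=\infty$ is handled the same way but more simply: the measure plays no role, and one just takes essential suprema of the pointwise-comparable quantities over $\Omega$ versus $\Exp_{p_0}(\Omega)$, again with constants from Corollary~\ref{chart_dependence}. Since all constants in sight come from the uniform (in $p_0$) bounds on the metric, its eigenvalues, its determinant, and the Christoffel symbols and their derivatives — guaranteed by compactness (or bounded geometry) as spelled out before Corollary~\ref{chart_dependence} — they depend on $r$ but not on $p_0$ or $\Omega$, which is exactly the claim.

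The main obstacle, and the one place that needs genuine care rather than bookkeeping, is the transition between the two notions of "derivative norm": the invariant $|\nabla^k u|_{g,p}$ is built from covariant tensor components contracted with $g^{ij}$ factors (as in \eqref{metric_inner_prod_tensor}), whereas the Euclidean Sobolev integrand is built from ordinary partials $D^\alpha v$ with multinomial weights. Bridging these requires chaining three facts: Lemma~\ref{inner_prod_tensors} to pass between $\langle\cdot,\cdot\rangle_{g,p}$ and $\langle\cdot,\cdot\rangle_{\delta,x}$ on tensors (costing powers of $\lambda_x(g),\Lambda_x(g)$); Proposition~\ref{pointwise_ineq}/Corollary~\ref{chart_dependence} to pass between covariant-derivative components and ordinary partial-derivative components (costing polynomials in the Christoffel data); and the elementary combinatorial identity converting $\sum_{\hati\in\D^k}|(\partial^k v)_{\hati}|^2$ into $\sum_{|\alpha|=k}\binom{k}{\alpha}|D^\alpha v|^2$. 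One must also be slightly careful that the comparison is between the full norm $\sum_{k\le j}|\nabla^k u|_{g,p}^2$ and $\sum_{k\le j}|\partial^k v|_{\delta,x}^2$ rather than termwise at each fixed $k$ — this is harmless because the comparisons in \eqref{euclidean_bnds}–\eqref{covariant_bnds} are already stated as sums up to $k$, and summing finitely many such inequalities for $k=0,\dots,j$ preserves the two-sided control. Everything else — the change of variables, the $\sfp/2$-power manipulation, the $\ell_2$-versus-$\ell_\sfp$ equivalence over a fixed finite index set — is routine.
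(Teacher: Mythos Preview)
Your proposal is correct and follows essentially the same approach as the paper: pointwise comparison of $|\nabla^k u|_{g,p}$ with $|\partial^k v|_{\delta,x}$ via Corollary~\ref{chart_dependence}, combined with the Jacobian bounds on $\sqrt{\det(g_{ij})}$ from Remark~\ref{det_dist_normal} under the change of variables $p=\Exp_{p_0}(x)$. If anything, you are more explicit than the paper about the $\ell_2$-to-$\ell_\sfp$ passage and the combinatorial identification of $|\partial^k v|_{\delta,x}^2$ with the multinomial-weighted Euclidean integrand, points the paper simply absorbs into the phrase ``combining these facts.''
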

\begin{proof}
For $\Omega \subset B(0,r)$ and $1\le \sfp<\infty$ we have
$$
\|u\|^\sfp_{j,\Exp_{p_0}(\Omega)} = \sum_{k=0}^m \int_{\Exp_{p_0}(\Omega)} | \nabla^k f |_{g,x}^\sfp \sqrt{\det\bigl(g_{ij}(x)\bigr)}\dif x.
$$
By Remark~\ref{det_dist_normal}, $\sqrt{\det\bigl(g_{ij}(x)\bigr)}$ is bounded above and below by constants depending only on $\mathrm r$. Similarly, by Corollary~\ref{chart_dependence}, the pointwise metric norms of the covariant derivatives $| \nabla^k f |_{g,p}$ are bounded uniformly by the corresponding Euclidean quantities. Combining these facts yields the result for $\sfp<\infty$. The $\sfp=\infty$ follows similarly.
\end{proof}

\subsection{Sobolev bounds for functions with scattered zeros in $\M$} \label{sobolev_zeros}
In this section, we show that Sobolev space functions having many zeros are uniformly small, provided the Sobolev space has high enough order. An immediate consequence of this gives 
pointwise error estimates for many kinds of interpolation processes. We will discuss this later, in Corollary~\ref{NSEE}.

\paragraph{The Euclidean case} We will need to discuss bounds on certain Sobolev norms for a special class of  domains in $\reals^d$. Following Brenner and Scott \cite[Chapter  
4]{Brenner-Scott-94-1}, we will say that a domain $\stardom$ is  \emph{star shaped with respect to a ball} $B(x_c,r)\subset \stardom $ if for every $x\in \stardom$, the closed  
convex hull of $\{x\}\cup B(x_c,r)$ is contained in $\stardom$.  If $\stardom$ is bounded, then there will be a ball $B(x_c,R)$ that contains $\stardom$. Of course, the diameter $d_\stardom$ of $\stardom$ satisfies $2r<d_\stardom <2R$. An important, useful geometric quantity associated with $\stardom$ is the \emph{chunkiness parameter} $\gamma$, which Brenner
and Scott \cite[Definition 4.2.16]{Brenner-Scott-94-1} define to be the ratio of $d_\stardom$
to the radius of the largest ball $B_\text{max}$ relative to which $\stardom$ is star shaped; i.e., $\gamma=d_\stardom/r_\text{max}$, where $r_\text{max}$ is the radius of $B_\text{max}$. It is easy to see that $\gamma \le \frac{2R}{r}$. In the case $\stardom=B(x_c,r)$, which is star shaped with respect to itself, the chunkiness parameter is $\gamma=2$. Another geometric property associated with $\stardom$ is a cone condition. Every  $x$ in $\stardom$ is the vertex of a cone $C_x \in \stardom$ whose axis is along $x_c-x$, radius is $r$, and aperture (maximum angle across the cone) is $\theta = 2\arcsin\left(\frac{r}{2R}\right)$ \cite[Proposition 2.1]{NWW}. 

\begin{lemma}\label{poly_bounds} 
Let $X=\{x_1,\ldots,x_N\} \subset \stardom$, let $\nats\ni k>0$, and let $h>0$. Suppose that $h=h_X$, the mesh norm of $X$ in $\stardom$, or, less stringently, that every ball $B(x,h)\subset \stardom$ contains at least one point  in $X$. If $h$ satisfies
\begin{equation}\label{h_cond}  
h\le \frac{\inrad \sin(\theta)}{4(1+\sin(\theta))k^2},  
\end{equation}  
and if $p\in\pi_k(\reals^d)$, then for every multi-index $\alpha$, $|\alpha | \le k$, then
\[  
\|D^\alpha p\|_{L_\infty(\stardom)}\le  
2\left(\frac{2k^2}{\inrad \sin(\theta)}\right)^{|\alpha|}  
\|p\|_{\ell_\infty(X)}.
\]  
\end{lemma}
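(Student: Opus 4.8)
The plan is to control a polynomial $p\in\pi_k(\reals^d)$ on all of $\stardom$ by its values on the discrete set $X$, proceeding through an intermediate estimate on a single Euclidean ball. First I would fix $x\in\stardom$ and consider the ball $B=B(x,h)$; by hypothesis it contains a point $x_j\in X$. On such a small ball I want a \emph{Markov-type} inequality for polynomials: there is a constant $M_k$ (depending only on $k$ and $d$) so that for $p\in\pi_k$ and any multi-index $\alpha$ with $|\alpha|\le k$,
\begin{equation*}
\|D^\alpha p\|_{L_\infty(B(x,h))}\le \left(\frac{M_k}{h}\right)^{|\alpha|}\|p\|_{L_\infty(B(x,h))}.
\end{equation*}
This is the classical Markov brothers inequality transported to a ball of radius $h$ by the affine rescaling $y\mapsto x+hy$; since differentiating once costs a factor of $k^2/h$ (up to a dimensional constant), iterating $|\alpha|$ times gives the displayed bound, and one can take $M_k$ proportional to $k^2$.

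Next I would pass from $L_\infty$ control of $p$ on $\stardom$ to $\ell_\infty$ control on $X$ by a sampling/recovery argument. Pick $x^*\in\overline{\stardom}$ where $\|p\|_{L_\infty(\stardom)}$ is (nearly) attained, and let $x_j\in X$ satisfy $|x^*-x_j|\le h$ — possible since $B(x^*,h)\cap\stardom$ still contains a point of $X$ when $\stardom$ is star-shaped and $h$ is small relative to $\inrad$ (this is where the cone condition and the exact form of \eqref{h_cond} enter, guaranteeing the little ball is not wasted outside $\stardom$). Then by the mean value theorem along the segment from $x_j$ to $x^*$, together with the Markov bound applied with $|\alpha|=1$ on a ball of radius comparable to $d_\stardom$ — or, more carefully, by summing increments along a chain of overlapping balls of radius $h$ — one gets
\begin{equation*}
\|p\|_{L_\infty(\stardom)}\le |p(x_j)| + h\sup_{\stardom}|\nabla p|\le \|p\|_{\ell_\infty(X)} + h\cdot\frac{M_k}{h}\cdot\tfrac12\|p\|_{L_\infty(\stardom)},
\end{equation*}
where the factor $\tfrac12$ is arranged precisely by condition \eqref{h_cond}: with $h\le \inrad\sin\theta/(4(1+\sin\theta)k^2)$ the product $h\cdot(M_k/\cdots)$ is at most $\tfrac12$. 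Absorbing the last term into the left side yields $\|p\|_{L_\infty(\stardom)}\le 2\|p\|_{\ell_\infty(X)}$. Feeding this back into the Markov inequality with the radius taken as $\inrad\sin\theta/(2k^2)$ — the natural scale dictated by the cone of aperture $\theta$ and radius $\inrad$ sitting inside $\stardom$ at each point — produces the stated bound $\|D^\alpha p\|_{L_\infty(\stardom)}\le 2\bigl(2k^2/(\inrad\sin\theta)\bigr)^{|\alpha|}\|p\|_{\ell_\infty(X)}$.

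The main obstacle is the geometric bookkeeping, not the polynomial inequalities: one must verify that at \emph{every} point of $\stardom$ there is genuinely a ball (or a cone-shaped region) of the claimed radius $\sim \inrad\sin\theta$ lying inside $\stardom$ along which a polynomial can be extended and differentiated, and that the chain of $h$-balls connecting an arbitrary extremal point to a sample point stays inside $\stardom$. This is exactly what the cone condition with aperture $\theta=2\arcsin(r/(2R))$ from \cite[Proposition 2.1]{NWW} supplies, and the precise numerology in \eqref{h_cond} is reverse-engineered so that the absorption step closes with the clean constant $2$. I would also take care that the constant $M_k$ from the Markov inequality combines with the dimensional constants so that the final exponential base is exactly $2k^2/(\inrad\sin\theta)$; tracking these constants honestly is the only delicate part.
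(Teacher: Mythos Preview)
The paper does not actually prove this lemma: its proof consists of the single sentence ``Apply \cite[Proposition~2.3]{NWW}, as modified by \cite[Remark~2.4]{NWW}.'' So you are attempting to reconstruct the argument behind the cited result, and your overall strategy---a Markov-type inequality on a geometrically natural region inside $\stardom$, combined with a sampling/absorption argument to pass from $\ell_\infty(X)$ to $L_\infty(\stardom)$---is indeed the approach taken in \cite{NWW}.

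That said, the displayed absorption inequality is garbled and, as written, does not close. You write
\[
\|p\|_{L_\infty(\stardom)}\le \|p\|_{\ell_\infty(X)} + h\cdot\frac{M_k}{h}\cdot\tfrac12\,\|p\|_{L_\infty(\stardom)},
\]
but $h\cdot(M_k/h)=M_k\sim k^2$, which is certainly not $\le\tfrac12$. The mistake is applying Markov on a ball of radius $h$: that controls $|\nabla p|$ only on that tiny ball, not on all of $\stardom$, and the resulting factor $k^2/h$ cannot be absorbed. What one actually needs---and what \cite{NWW} does---is a Markov inequality along the \emph{cone} $C_x\subset\stardom$ of radius $\inrad$ and aperture $\theta$ guaranteed at every point $x$. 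Restricting $p$ to the cone's axis gives a univariate polynomial on an interval of length $\inrad$, and one-dimensional Markov yields $|\nabla p(x)|\lesssim (k^2/\inrad)\|p\|_{L_\infty(\stardom)}$; the transverse directions are handled by the aperture, producing the factor $\sin\theta$. With that correct length scale the absorption step reads
\[
\|p\|_{L_\infty(\stardom)}\le \|p\|_{\ell_\infty(X)} + C\,h\,\frac{k^2}{\inrad\sin\theta}\,\|p\|_{L_\infty(\stardom)},
\]
and now hypothesis \eqref{h_cond} makes the last coefficient at most $\tfrac12$. Your subsequent remark about ``feeding back into Markov with radius $\inrad\sin\theta/(2k^2)$'' is similarly off: the radius is $\sim\inrad$ (or the axis length), and the $k^2$ and $\sin\theta$ appear in the Markov \emph{constant}, not in the radius. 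Once these length scales are sorted out, your outline matches the cited argument.
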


\begin{proof}
Apply  \cite[Proposition~2.3]{NWW}, as modified by \cite[Remark~2.4]{NWW}.
\end{proof}

\begin{remark}\label{poly_bounds_remark}
\em Two things. First, if $h=h_X$, then every ball $B(x,h_X)\subset \stardom $ contains a point in $X$. Thus the ``or'' is not necessary. Second, if we take $r=r_\text{max}$ and $R=d_\stardom$, then $\frac{2R}{r} = \frac{2d_\stardom}{r_\text{max}}= 2\gamma$ and $\sin(\theta/2) =\frac{1}{2\gamma}$. It is easy to show that
\begin{equation}\label{replacement_bounds}  
\frac{d_\stardom}{16k^2\gamma^2}\le\frac{\inrad \sin(\theta)}{4(1+\sin(\theta))k^2} \ \text{\rm and}\ 
\frac{2k^2}{\inrad \sin(\theta)} \le \frac{4\gamma^2 k^2}{d_\stardom}.
\end{equation}
Thus the restriction (\ref{h_cond}) on $h$ and the bound on $\|D^\alpha p\|_{L_\infty(\stardom)}$ now become
\begin{equation}\label{h_cond2}  
h\le \frac{d_\stardom}{16k^2\gamma^2}\ \text{\rm and}\ 
\|D^\alpha p\|_{L_\infty(\stardom)} \le 2\left(\frac{4\gamma^2 k^2}{d_\stardom}\right)^{|\alpha|}\|p\|_{\ell_\infty(X)} .
\end{equation}
\em 
\end{remark}

\begin{proposition}\label{stardom_est}  
Let $\stardom\subset \reals^d$ be a bounded, star-shaped domain,  $m\in \nats$ and $\sfp\in \reals$, $1\le \sfp \le \infty$. Assume $m>d/\sfp$ when $\sfp>1$, and $m\ge d$, for $\sfp=1$. If $u\in W_\sfp^m(\stardom)$  satisfies $u|_X=0$, where $X=\{x_1,\ldots,x_N\}\subset \stardom$ and if $h=h_X\le  \frac{d_\stardom}{16k^2\gamma^2}$, then
\begin{equation}
\label{p_bound_W_k_u}
|u|_{W_\sfp^k(\stardom)} \le C_{m,d,\sfp} \gamma^{d+2k} d_\stardom^{m-k}  
|u|_{W_\sfp^m(\stardom)}.
\end{equation} 
\begin{equation}
\label{infty_bnd_u}
\|u\|_{L_\infty(\stardom)} \le C_{m,d,\sfp} \gamma^{d} d_\stardom^{m-d/\sfp} |u|_{W_\sfp^{m}(\stardom)}.
\end{equation} 
\end{proposition}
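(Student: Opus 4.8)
The plan is to reduce everything to a single application of a standard polynomial-reproduction / averaged-Taylor-polynomial argument on star-shaped domains, combined with the polynomial sampling inequality in Lemma~\ref{poly_bounds} (in the form \eqref{h_cond2}). The key construction is the Sobolev (averaged Taylor) polynomial $Q = Q^m u \in \pi_{m-1}(\reals^d)$ associated with $\stardom$ and the ball $B(x_c,r)$, as in Brenner--Scott \cite{Brenner-Scott-94-1}; it satisfies the Bramble--Hilbert-type estimates
\[
|u - Q|_{W_\sfp^k(\stardom)} \le C_{m,d,\sfp}\, \gamma^{d}\, d_\stardom^{\,m-k}\, |u|_{W_\sfp^m(\stardom)}, \qquad 0\le k\le m,
\]
and, since $m>d/\sfp$ (respectively $m\ge d$ when $\sfp=1$), the Sobolev embedding version
\[
\|u - Q\|_{L_\infty(\stardom)} \le C_{m,d,\sfp}\, \gamma^{d}\, d_\stardom^{\,m-d/\sfp}\, |u|_{W_\sfp^m(\stardom)}.
\]
(The powers of $\gamma$ come from tracking the chunkiness parameter through the Brenner--Scott proof; a clean reference for the explicit $\gamma$-dependence is \cite{NWW}.) Since $u$ vanishes on $X$, on $X$ we have $Q = Q - u$, so $\|Q\|_{\ell_\infty(X)} = \|u-Q\|_{\ell_\infty(X)} \le \|u-Q\|_{L_\infty(\stardom)}$, and the $L_\infty$ bound above controls the right-hand side.

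\smallskip
Next I would feed this into the sampling inequality. Because $Q\in\pi_{m-1}(\reals^d)$ and the hypothesis $h = h_X \le d_\stardom/(16 k^2\gamma^2)$ is exactly condition \eqref{h_cond2} with $k = m-1$ (absorbing the harmless change from $k$ to $m-1$ into the constant, or simply taking $k=m$ there since $\pi_{m-1}\subset\pi_m$), Lemma~\ref{poly_bounds} together with Remark~\ref{poly_bounds_remark} gives, for every multi-index $\alpha$ with $|\alpha|\le m-1$,
\[
\|D^\alpha Q\|_{L_\infty(\stardom)} \le 2\left(\frac{4\gamma^2 k^2}{d_\stardom}\right)^{|\alpha|}\|Q\|_{\ell_\infty(X)} \le 2\left(\frac{4\gamma^2 k^2}{d_\stardom}\right)^{|\alpha|}\|u-Q\|_{L_\infty(\stardom)}.
\]
Combining with the $L_\infty$ Bramble--Hilbert estimate yields
\[
\|D^\alpha Q\|_{L_\infty(\stardom)} \le C_{m,d,\sfp}\, \gamma^{2|\alpha|+d}\, d_\stardom^{\,m-d/\sfp-|\alpha|}\, |u|_{W_\sfp^m(\stardom)}.
\]

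\smallskip
Finally I assemble the two claimed inequalities by the triangle inequality $u = (u-Q) + Q$. For \eqref{p_bound_W_k_u}: bound $|u|_{W_\sfp^k(\stardom)} \le |u-Q|_{W_\sfp^k(\stardom)} + |Q|_{W_\sfp^k(\stardom)}$; the first term is handled by the Bramble--Hilbert estimate (its $\gamma$-power $\gamma^d$ is dominated by $\gamma^{d+2k}$), and for the second I convert the $L_\infty$ derivative bounds on $Q$ into an $L_\sfp$ bound by multiplying by $|\stardom|^{1/\sfp}\le C d_\stardom^{d/\sfp}$ (using $\sfp\le\infty$ with the usual convention), which turns the exponent $m-d/\sfp-k$ into $m-k$ and leaves $\gamma^{d+2k}$ as the final power. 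For \eqref{infty_bnd_u}: $\|u\|_{L_\infty(\stardom)} \le \|u-Q\|_{L_\infty(\stardom)} + \|Q\|_{L_\infty(\stardom)}$, and both terms are already bounded by $C_{m,d,\sfp}\gamma^d d_\stardom^{m-d/\sfp}|u|_{W_\sfp^m(\stardom)}$ (the $Q$ term from the $|\alpha|=0$ case above).

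\smallskip
The main obstacle is bookkeeping rather than conceptual: one must verify that the Brenner--Scott averaged-Taylor-polynomial estimates hold with the explicit power $\gamma^d$ (their statements are often phrased for a fixed domain with implicit constants), and then carefully track how each subsequent multiplication by $d_\stardom$, by $\gamma^2$ from the sampling inequality, and by $|\stardom|^{1/\sfp}$ contributes to the final exponents $\gamma^{d+2k} d_\stardom^{m-k}$ and $\gamma^d d_\stardom^{m-d/\sfp}$. A secondary subtlety is the endpoint case $\sfp=1$, where one needs $m\ge d$ (rather than $m>d$) for the embedding $W_1^m(\stardom)\hookrightarrow L_\infty(\stardom)$ to hold; this is exactly the hypothesis stated, and the Brenner--Scott machinery accommodates it, but the borderline Sobolev embedding must be invoked in the correct form.
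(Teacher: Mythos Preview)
Your proposal is correct and follows essentially the same route as the paper: introduce the averaged Taylor polynomial $Q^m u$, use $u|_X=0$ together with the Brenner--Scott $L_\infty$ estimate to bound $\|Q^m u\|_{\ell_\infty(X)}$, apply the polynomial sampling inequality of Lemma~\ref{poly_bounds} (via Remark~\ref{poly_bounds_remark}) to control $\|D^\alpha Q^m u\|_{L_\infty(\stardom)}$, convert to $L_\sfp$ via $|\stardom|^{1/\sfp}\lesssim d_\stardom^{d/\sfp}$, and finish with the Bramble--Hilbert bound on $|u-Q^m u|_{W_\sfp^k(\stardom)}$ plus the triangle inequality. The order of exposition differs slightly, but every ingredient and every combination step matches the paper's proof.
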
  
\begin{proof}
Following the notation in Brenner-Scott \cite{Brenner-Scott-94-1}, we take $Q^m u\in \pi_{m-1}(\reals^d)$ to be the Taylor polynomial for $u$ averaged over $B_\text{max}$, the largest ball relative to which $\stardom$ is star shaped.  We begin by estimating $|Q^mu|_{W_\infty^k(\stardom)}$. Using $u|_X=0$, we have 
\[
\|Q^m u\|_{\ell_\infty(X)}=\|Q^m u - u\|_{\ell_\infty(X)}\le \|Q^m u - u\|_{L_\infty(\stardom)} .
\]
By \cite[Proposition~4.3.2]{Brenner-Scott-94-1}, with the $\gamma$-dependence involved there explicitly included, the right-hand side above has the bound 
\begin{equation}
\label{L_infinity_error}
\|Q^m u - u\|_{L_\infty(\stardom)}\le C_{m,d,\sfp}\gamma^d d_\stardom^{m-d/\sfp} |u|_{W_\sfp^{m}(\stardom)}.
\end{equation}
It follows that 
\[
\|Q^m u\|_{\ell_\infty(X)} \le C_{m,d,\sfp} \gamma^d d_\stardom^{m-d/\sfp} |u|_{W_\sfp^{m}(\stardom)}.
\] 
The next step requires applying Lemma~\ref{poly_bounds}, with  $r=r_{\text{max}}$ and $R=d_\stardom$, and then using Remark~\ref{poly_bounds_remark}. Doing so results in
\begin{equation}
\label{L-infinity_bnd_Q^m}
\|D^\alpha Q^m u\|_{L_\infty(\stardom)}\le  C'_{m,d,\sfp} \gamma^{d+2|\alpha|} d_\stardom^{m-| \alpha|-d/\sfp} |u|_{W_\sfp^{m}(\stardom)}.
\end{equation}
Sum over $|\alpha|=k\le m$ and use $\|D^\alpha Q^m u\|_{W_p^k(\stardom)} \lesssim d_\stardom^{1/\sfp}\|D^\alpha Q^m u\|_{L_\infty(\stardom)}$ to get
\begin{equation}
\label{W_p^k_bnd_Q^m}
|Q^mu|_{W_\sfp^k(\stardom)} \le C''_{m,d,\sfp}\gamma^{d+2k} d_\stardom^{m- k}|u|_{W_\sfp^{m}(\stardom)}.
\end{equation}
We need to estimate $|u-Q^mu|_{W_\sfp^{k}(\stardom)}$. To do this, we will use the Bramble-Hilbert Lemma \cite{Brenner-Scott-94-1}, which holds for $1\le p\le \infty$. Take $r=r_\text{max}$ in the lemma. We have, after carefully tracking the $\gamma$-dependence of the constant there, 
\begin{equation}
\label{error_sobolev_bnd}
|u-Q^mu|_{W_\sfp^{k}(\stardom)}\le C_{m,d}\gamma^d d_\stardom^{m-k} |u|_{W_\sfp^{m}(\stardom)}, \ k=0,1,\ldots,m.
\end{equation}
Using the triangle inequality in conjunction with (\ref{W_p^k_bnd_Q^m}) and (\ref{error_sobolev_bnd}) results in (\ref{p_bound_W_k_u}). In addition, doing the same with the bounds in  (\ref{L-infinity_bnd_Q^m}), for $|\alpha |$, and (\ref{L_infinity_error}) yields (\ref{infty_bnd_u}).
\end{proof}

The inequality (\ref{infty_bnd_u}) is a special case of the one established next.

\begin{corollary} With the notation of Proposition~\ref{stardom_est}, we have
\[
|u|_{W_\infty^k(\stardom)}\le C_{m,k,d,\sfp}\gamma^{2d+2(m-k)}d_\stardom^{m-k-d/\sfp} (1+d_\stardom^k)|u|_{W_p^m(\stardom)}.
\]
\end{corollary}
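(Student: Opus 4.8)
The plan is to bootstrap from the two displays already proved in Proposition~\ref{stardom_est}. The quantity we want to bound is $|u|_{W_\infty^k(\stardom)}$, which is built out of the sup norms of the derivatives $D^\alpha u$ with $|\alpha| = k$. The natural move is to split each such derivative as $D^\alpha u = D^\alpha(u - Q^m u) + D^\alpha Q^m u$ and estimate the two pieces separately, exactly as in the proof of the Proposition, but now measuring the first piece in $L_\infty$ rather than in $W_\sfp^k$.

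First I would handle the polynomial piece. Inequality (\ref{L-infinity_bnd_Q^m}) already gives
$\|D^\alpha Q^m u\|_{L_\infty(\stardom)} \le C'_{m,d,\sfp}\gamma^{d+2|\alpha|}d_\stardom^{m-|\alpha|-d/\sfp}|u|_{W_\sfp^m(\stardom)}$ for every $|\alpha|\le m$; summing over $|\alpha| = k$ contributes a term of order $\gamma^{d+2k}d_\stardom^{m-k-d/\sfp}$, which is dominated by the claimed bound since $d+2k \le 2d+2(m-k)$ when $k\le m$ (note $d+2k \le 2d + 2(m-k)$ iff $4k \le 2d+2m$, which holds because $k\le m$ and $d\ge 1$). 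Second, for the error piece $u - Q^m u$, I would apply inequality (\ref{infty_bnd_u}) of Proposition~\ref{stardom_est}, but to the \emph{derivatives} of $u$: one needs a statement of the form $\|D^\alpha(u-Q^mu)\|_{L_\infty(\stardom)} \lesssim \gamma^{d}d_\stardom^{m-k-d/\sfp}|u|_{W_\sfp^{m}(\stardom)}$. This follows from the Bramble–Hilbert/averaged-Taylor machinery of Brenner–Scott applied at order $m-k$: since $D^\alpha Q^m u$ is (up to the averaging) the averaged Taylor polynomial of $D^\alpha u$ of degree $m-1-k$, one gets $\|D^\alpha u - Q^{m-k}(D^\alpha u)\|_{L_\infty} \lesssim \gamma^{?}d_\stardom^{(m-k)-d/\sfp}|D^\alpha u|_{W_\sfp^{m-k}} \le \gamma^{?}d_\stardom^{(m-k)-d/\sfp}|u|_{W_\sfp^m}$, and the commutator between $D^\alpha$ and the averaging operator $Q^m$ is a lower-degree polynomial controlled by (\ref{L-infinity_bnd_Q^m}). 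Collecting the $\gamma$-powers carefully across these estimates (the worst one being $\gamma^{2d+2(m-k)}$, coming from combining a $\gamma^d$ from the embedding with a $\gamma^{d+2(m-k)}$ from the polynomial inverse-estimate step at the relevant degree) and the $d_\stardom$-powers (the two pieces give $d_\stardom^{m-k-d/\sfp}$ and $d_\stardom^{m-k}$ respectively, whence the factor $(1+d_\stardom^k)$ after also accounting for the $k=0$ term in $|u|_{W_\infty^k}$ versus the scaling of lower-order derivatives — actually the $(1+d_\stardom^k)$ absorbs the discrepancy between the homogeneity $d_\stardom^{m-k-d/\sfp}$ of the top term and the $d_\stardom^{m-d/\sfp}$ of the lowest-order term $\|u\|_{L_\infty}$) yields the stated inequality.

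The main obstacle is bookkeeping rather than conceptual: one must track the exact $\gamma$- and $d_\stardom$-dependence through the Brenner–Scott estimates when they are applied not to $u$ but to $D^\alpha u$, and verify that the exponent $2d+2(m-k)$ in $\gamma$ and the factor $d_\stardom^{m-k-d/\sfp}(1+d_\stardom^k)$ genuinely dominate every term that appears. A minor subtlety is that when $\sfp>1$ the hypothesis $m>d/\sfp$ of Proposition~\ref{stardom_est} is what licenses the $L_\infty$ embedding for the top-order term $D^\alpha u$ with $|\alpha|=k$; since $m-k$ could be small, one should check that the relevant Sobolev embedding $W_\sfp^{m-k}\hookrightarrow L_\infty$ is replaced, when $m-k \le d/\sfp$, by the weaker chain where one keeps $m-k$ derivatives of $u$ but still has $m > d/\sfp$ overall — the averaged-Taylor remainder estimate of Brenner–Scott is valid in the full range and does not actually require $m-k>d/\sfp$, only $m>d/\sfp$, because it is the remainder $u - Q^m u$ (order $m$) whose derivatives are being bounded. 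Once that point is in hand, the triangle inequality assembles the two pieces into the claimed corollary.
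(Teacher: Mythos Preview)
Your approach is essentially the paper's own: split $D^\alpha u = D^\alpha Q^m u + D^\alpha(u-Q^mu)$, bound the first piece via (\ref{L-infinity_bnd_Q^m}), and for the second use the exact identity $D^\alpha Q^m u = Q^{m-|\alpha|}(D^\alpha u)$ together with the $L_\infty$ Brenner--Scott remainder estimate (\ref{L_infinity_error}) at order $m-k$ applied to $D^\alpha u$, then pass from the resulting $|u|_{W_\sfp^{m-k}}$ to $|u|_{W_\sfp^m}$ via (\ref{p_bound_W_k_u}) with $k$ replaced by $m-k$. That last step is precisely your ``polynomial inverse-estimate step at the relevant degree,'' and it is where the extra $\gamma^{d+2(m-k)}$ and the factor $d_\stardom^{k}$ (hence $(1+d_\stardom^k)$ after combining with the $d_\stardom^{m-k-d/\sfp}$ from the $Q^mu$ piece) originate.

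Two small corrections. First, since $D^\alpha Q^m = Q^{m-|\alpha|}D^\alpha$ exactly, there is no commutator term to handle; that part of your sketch is unnecessary. Second, your assertion that the $L_\infty$ remainder estimate ``does not actually require $m-k>d/\sfp$'' is not right: applying (\ref{L_infinity_error}) at order $m-k$ to $D^\alpha u$ does require $m-k>d/\sfp$ (respectively $m-k\ge d$ when $\sfp=1$) for the Sobolev embedding into $L_\infty$. The paper leaves this hypothesis implicit as well, so you are in good company, but it is a genuine restriction on the range of $k$ for which the argument goes through as stated.
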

\begin{proof}
From our estimate on $|Q^mu|_{W_\infty^k(\stardom)} $, it is easy to show that 
\[
|Q^mu|_{W_p^k(\stardom)} \le C'_{m,d,\sfp}\gamma^{d+2k} d_\stardom^{m- k }|u|_{W_\sfp^{m}(\stardom)}.
\]
Putting the two estimates together via the triangle inequality yields (\ref{p_bound_W_k_u}). Now suppose that $u\in W_\infty^k(\stardom)$. $D^\alpha Q^m u=Q^{m-|\alpha|}D^\alpha u$, so
\[
\|D^\alpha (Q^m u - u)\|=\|Q^{m-|\alpha|}(D^\alpha u) - (D^\alpha u)\| \le C_{m-|\alpha|,d,\sfp} (1+\gamma)^d d_\stardom^{m-|\alpha|-d/\sfp} |u|_{W_\sfp^{m-|\alpha|}(\stardom)}.
\]
Hence, we have
\[
|Q^m u - u|_{W_\infty^k(\stardom)} \le C_{m-k,d,\sfp} \gamma^d d_\stardom^{m-k-d/\sfp} |u|_{W_\sfp^{m-k}(\stardom)}.
\]
By (\ref{p_bound_W_k_u}), $k$ replaced by $ m-k$, we finally arrive at
\[
|Q^m u - u|_{W_\infty^k(\stardom)} \le C_{m,k,d,\sfp}  \gamma^{2d+2(m-k)}d_\stardom^{m-d/\sfp} 
|u|_{W_p^m(\stardom)}.
\]
Combining this with our bound on $|Q^mu|_{W_\infty^k(\stardom)}$ and again employing the triangle inequality, we obtain the desired inequality.
\end{proof}

We will next apply the result above in the special case of a ball, where the bounds simplify considerably. Specifically, when $u\in W_p^m(B(x,r))$ and $u|_X=0$, it allows us to control certain sums of lower order Sobolev norms by $|u|_{W_p^m(B(x,r))}$. Doing this yields the following:

\begin{lemma}\label{genball} Let $X=\{x_1,\ldots,x_N\} \subset B(x,r)$ have its mesh norm $h=h(X,B(x,r))$ satisfy $h\le h_0r$, where $h_0:=\frac{1}{32m^2}$. In addition, suppose that $m>d/\sfp$, if $\sfp>1$, and that $m\ge d$, if $\sfp=1$. Then there is a constant $C_{m,d,\sfp}>0$ such that  the estimate
\begin{equation}\label{key_est_p}
\bigg(\sum_{k\le m} r^{\sfp(k-m)} | u|_{W_\sfp^k(B(x,r))}^2\bigg)^{1/\sfp} \le C_{m,d,\sfp} |u|_{W_\sfp^m(B(x,r))}
\end{equation}
holds for all $u\in W_\sfp^m(B(x,r))$ vanishing on $X$ (i.e., $u_{|_{X}}= 0$). In addition, we have that
\begin{equation}\label{infty_bnd_u_ball}
\|u\|_{L_\infty(B(x,r))} \le C_{m,d,\sfp}r^{m-d/\sfp} |u|_{W_\sfp^m(B(x,r))}
\end{equation}
\end{lemma}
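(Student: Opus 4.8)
The plan is to obtain Lemma~\ref{genball} as the specialization of Proposition~\ref{stardom_est} to the domain $\stardom = B(x,r)$, for which the chunkiness parameter is the fixed constant $\gamma = 2$ and the diameter is $d_\stardom = 2r$. First I would check the hypotheses: a ball is star-shaped with respect to itself, so Proposition~\ref{stardom_est} applies with $k$ there equal to $m$ here, and the mesh condition $h \le h_0 r = \frac{1}{32 m^2}\, r$ is exactly the specialization of $h \le \frac{d_\stardom}{16 k^2 \gamma^2} = \frac{2r}{16 m^2 \cdot 4} = \frac{r}{32 m^2}$, so nothing needs to be reproved there. With $\gamma = 2$ absorbed into the constant, (\ref{p_bound_W_k_u}) becomes $|u|_{W_\sfp^k(B(x,r))} \le C_{m,d,\sfp}\, r^{m-k}\, |u|_{W_\sfp^m(B(x,r))}$ for each $k \le m$, and (\ref{infty_bnd_u}) becomes $\|u\|_{L_\infty(B(x,r))} \le C_{m,d,\sfp}\, r^{m - d/\sfp}\, |u|_{W_\sfp^m(B(x,r))}$, which is precisely (\ref{infty_bnd_u_ball}).

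The remaining content is (\ref{key_est_p}), which just repackages the family of inequalities $|u|_{W_\sfp^k} \le C\, r^{m-k} |u|_{W_\sfp^m}$, $0 \le k \le m$, into a single estimate. Rearranging each one gives $r^{k-m} |u|_{W_\sfp^k(B(x,r))} \le C_{m,d,\sfp}\, |u|_{W_\sfp^m(B(x,r))}$. For $1 \le \sfp < \infty$ I would raise to the power $\sfp$, sum over $k = 0, 1, \ldots, m$ (there are $m+1$ terms, each bounded by $C^\sfp |u|_{W_\sfp^m}^\sfp$), and take the $\sfp$-th root, which yields $\big(\sum_{k\le m} r^{\sfp(k-m)} |u|_{W_\sfp^k(B(x,r))}^\sfp\big)^{1/\sfp} \le (m+1)^{1/\sfp} C_{m,d,\sfp}\, |u|_{W_\sfp^m(B(x,r))}$; absorbing $(m+1)^{1/\sfp}$ into the constant gives (\ref{key_est_p}). (I note the exponent on $|u|_{W_\sfp^k}$ in the displayed statement reads $2$ rather than $\sfp$; I would read this as a typo for $\sfp$, since that is the exponent for which the normalization is dimensionally consistent, and prove the $\sfp$-power version.) The case $\sfp = \infty$ is handled by replacing the sum with a maximum: $\max_{k \le m} r^{k-m} |u|_{W_\infty^k(B(x,r))} \le C_{m,d} |u|_{W_\infty^m(B(x,r))}$, which follows the same way from the $\sfp=\infty$ form of (\ref{p_bound_W_k_u}).

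I do not anticipate a genuine obstacle here; the lemma is essentially bookkeeping built on Proposition~\ref{stardom_est}. The one point requiring a little care is making sure the $\gamma$- and $d_\stardom$-powers in Proposition~\ref{stardom_est} collapse correctly: with $\gamma$ a universal constant and $d_\stardom = 2r$, every factor $\gamma^{d+2k}$ and every constant $2$ from $d_\stardom = 2r$ is harmless and gets swept into $C_{m,d,\sfp}$, leaving only the essential scaling $r^{m-k}$ (respectively $r^{m-d/\sfp}$). The other point is verifying that the mesh-norm hypothesis of Proposition~\ref{stardom_est} is implied by $h \le h_0 r$ with the stated $h_0 = 1/(32m^2)$, which is the direct substitution checked above.
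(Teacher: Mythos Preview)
Your proposal is correct and follows essentially the same route as the paper: specialize Proposition~\ref{stardom_est} to $\stardom=B(x,r)$ with $\gamma=2$ and $d_\stardom=2r$, verify that $h\le h_0 r$ is exactly the required mesh condition, absorb the resulting constants, and then combine the per-$k$ bounds into~(\ref{key_est_p}) while reading~(\ref{infty_bnd_u_ball}) off directly from~(\ref{infty_bnd_u}). The paper merely says ``standard algebraic manipulations'' where you spell out the summation, and your remark that the exponent $2$ in the displayed sum should be $\sfp$ is well taken.
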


\begin{proof}
For a ball $\stardom=B(x,r)$, which is of course star shaped, the chunkiness parameter is $\gamma=2$, the diameter $d_{B(x,r)} =2r$. Since $h\le \frac{d_\stardom}{16m^2\gamma^2}=h_0r$, Proposition~\ref{stardom_est}  applies. Thus, for $k=0,\ldots,m$, the bounds in (\ref{p_bound_W_k_u}) become 
\[
|u|_{W_\sfp^k(\stardom)} \le C_{m,d,\sfp} 2^{d+k} r^{m-k} |u|_{W_\sfp^k(\stardom)} 
\]
Standard algebraic manipulations of the expression above then yield (\ref{key_est_p}). The last inequality (\ref{infty_bnd_u_ball}) is a  direct  consequence of (\ref{infty_bnd_u}).
\end{proof}

\paragraph{The manifold case} The case of Sobolev bounds on $u$ when the underlying set  is a geodesic ball in $\M$ can be treated using a combination of the results involving metric equivalence, Lemma~\ref{Fran}, and the corresponding Sobolev bounds in Lemma~\ref{genball} for Euclidean balls. We can treat much more general situations than the one described below, but for now it is precisely what we need for the sequel.

\begin{lemma}[Zeros Lemma]\label{zeros}
Let $m$ be a positive integer, greater than $d/2$, and let $r$ be a positive real number less than $\inj$, the injectivity radius of $\M$. Suppose that $\Xi \subset \b(p,r)\subset \M$
is a discrete set with mesh norm $h \le \Gamma_1 r h_0$. If 
$u \in W^{m}_2 (\b(p,r))$ satisfies $u_{|_{\Xi}} = 0$, then for every $q\in \b(p,r)$,
$$|u(q)| \le C_{m,\M} r^{m-d/2}\|u\|_{W^{m}_2(\b(p,r))},$$ 
where $C_{m,\M}$ is a constant independent of $u,p,q,h$ and $r$, and where $\Gamma_1$ is as in (\ref{isometry}).
\end{lemma}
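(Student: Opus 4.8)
The plan is to transfer the problem from the geodesic ball $\b(p,r)\subset\M$ to the Euclidean ball $B(0,r)$ in the tangent space $T_pM$ via the exponential map, apply the Euclidean zeros estimate (\ref{infty_bnd_u_ball}) from Lemma~\ref{genball}, and then pull the conclusion back to the manifold using the metric equivalence in Lemma~\ref{Fran}. Concretely, set $\phi=\Exp_p^{-1}$, $\fraku=\b(p,r)$, and define $v:=u\circ\Exp_p:B(0,r)\to\reals$. Since $\Exp_p$ is a diffeomorphism on $B(0,r)$ (because $r<\inj$), the composition is well-defined and $v\in W_2^m(B(0,r))$ by Lemma~\ref{Fran}. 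The zero set transfers too: let $X:=\Exp_p^{-1}(\Xi)\subset B(0,r)$, so $v|_X=0$ because $u|_\Xi=0$.

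The key step is to check the mesh-norm hypothesis of Lemma~\ref{genball} for $X$ in $B(0,r)$. By hypothesis $\Xi$ has mesh norm $h\le\Gamma_1 r h_0$ in $\b(p,r)$ (measured with the Riemannian distance $\d$). By the uniform bi-Lipschitz bound (\ref{isometry}), namely $\Gamma_1|x-y|\le\d(\Exp_p(x),\Exp_p(y))\le\Gamma_2|x-y|$, the Euclidean mesh norm $h_X$ of $X$ in $B(0,r)$ satisfies $h_X\le h/\Gamma_1\le r h_0$, which is exactly the requirement $h_X\le h_0 r$ in Lemma~\ref{genball} (with $h_0=\tfrac{1}{32m^2}$). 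One small point to be careful about: Lemma~\ref{genball} really needs that every Euclidean ball $B(x,h_X)\subset B(0,r)$ contains a point of $X$; this follows from the fact that every geodesic ball $\b(\Exp_p(x),h)\subset\b(p,r)$ contains a point of $\Xi$, again via (\ref{isometry}). With $m>d/2=d/\sfp$ for $\sfp=2$, Lemma~\ref{genball} applies and gives
$$\|v\|_{L_\infty(B(0,r))}\le C_{m,d}\,r^{m-d/2}\,|v|_{W_2^m(B(0,r))}\le C_{m,d}\,r^{m-d/2}\,\|v\|_{W_2^m(B(0,r))}.$$

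Finally, pull back: for $q\in\b(p,r)$ write $q=\Exp_p(x)$ with $x\in B(0,r)$, so $|u(q)|=|v(x)|\le\|v\|_{L_\infty(B(0,r))}$. Applying the right-hand inequality of Lemma~\ref{Fran} (with $j=m$, $\sfp=2$, $\Omega=B(0,r)$) gives $\|v\|_{W_2^m(B(0,r))}=\|u\circ\Exp_p\|_{W_2^m(B(0,r))}\le c_1^{-1}\|u\|_{W_2^m(\Exp_p(B(0,r)))}=c_1^{-1}\|u\|_{W_2^m(\b(p,r))}$. Chaining the three displays yields $|u(q)|\le C_{m,\M}\,r^{m-d/2}\,\|u\|_{W_2^m(\b(p,r))}$ with $C_{m,\M}=c_1^{-1}C_{m,d}$; crucially, $c_1$ depends only on $r$ and $m$ (Lemma~\ref{Fran}) and $\Gamma_1$ depends only on $r$ (Remark~\ref{det_dist_normal}), while $C_{m,d}$ depends only on $m,d$, so the final constant is independent of $u,p,q,h,r$ as claimed. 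The only genuinely delicate bookkeeping — and the step I expect to require the most care — is tracking that none of the constants $c_1$, $\Gamma_1$, $C_{m,d}$ secretly depend on the center $p$ or on the particular ball, which is precisely what the uniformity statements in Corollary~\ref{chart_dependence}, Lemma~\ref{Fran}, and Remark~\ref{det_dist_normal} were set up to guarantee; everything else is routine transfer of hypotheses across the diffeomorphism $\Exp_p$.
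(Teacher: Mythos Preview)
Your proof is correct and follows essentially the same route as the paper: pull back to $B(0,r)$ via $\Exp_p$, verify the Euclidean mesh-norm hypothesis using (\ref{isometry}), apply the Euclidean estimate (\ref{infty_bnd_u_ball}) from Lemma~\ref{genball}, and transfer back via Lemma~\ref{Fran}. Your write-up is in fact more careful than the paper's about the mesh-norm transfer and the constant tracking; the only residual subtlety (which the paper also glosses over) is that $c_1$ from Lemma~\ref{Fran} is stated to depend on $r$, but since $r<\inj$ one simply takes the constants for the fixed radius $\inj$, giving uniformity in $r$.
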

\begin{proof}
Using the diffeomorphism $\Exp_{p}$, 
we set $X = \Exp_{p}^{-1}(\Xi)$ and note that this set has
mesh norm $h(X,B(0,r))\le rh_0$ by (\ref{isometry}). Defining $\widetilde{u}$ as $u\circ \Exp_p$, 
and setting $q=\Exp_p(z)$,
we see that (\ref{infty_bnd_u_ball}) applies to $\widetilde{u}$, giving
$$u(q)= \widetilde{u}(z)\le C_{m,d}r^{m-d/2} \|\widetilde{u}\|_{W_2^m(B(0,r))}\le C_{m,\M}r^{m-d/2}  \|u\|_{W_2^m(\b(p,r))},$$
by Lemma \ref{Fran}.
\end{proof}

%
\subsection{The family of kernels $\kappa_{m,\M}$ on $\M$}\label{native_space_kernels}
We now are prepared to identify the family of kernels associated with bounded Lebesgue constants.
A well known fact, which also happens to be a simple consequence  of combining Lemma \ref{Fran} and the Sobolev embedding theorem on domains in $\reals^d$ via, is that $W_2^m(\M)$ is embedded in the space of continuous functions on $\M$, for $m>d/2$ \cite[\S 2.7]{Aub}. Consequently, point evaluation is a bounded linear functional, and $W_2^m$ has a unique reproducing kernel, which we define by $\kappa_{m,\M}:\M \times \M \to \reals$, although we often suppress the domain, writing $\kappa_m = \kappa_{m,\M}$. Being a reproducing kernel means that
\[
 f(x) = \langle f, \kappa_{m}(\cdot,x)\rangle_m 
\]
for all $f\in W_2^m$. The reproducing  kernel is necessarily \emph{strictly} positive definite: the formula $\sum_{\xi,\zeta \in \Xi} v_{\xi} v_{\zeta} \kappa_m(\xi,\zeta) = \|\sum_{\xi \in \Xi} v_\xi\kappa_m(\cdot, \xi)\|_m^2 =0$ implies that  there exist coefficients $(\alpha_{\zeta})_{\zeta\in \Xi}$ so that, for all $f\in W_2^m$, $f(\xi) = \sum_{\zeta \in \Xi\setminus\{\xi\}} \alpha_{\zeta}f(\zeta)$. Using a bump function centered at $\xi$ for $f$ easily provides a counterexample.

As an aside, we note that we can modify the Sobolev norms (and, hence, the reproducing
kernels) in the following benign way:
$$
\| f\|^2_{\mathcal{H}_m(\Omega)} := \sum_{k=0}^m C_k \int_{\Omega} |\nabla^k f|_{g,p}^2 \dif\mu(p),
$$
where $C_m>0$, $C_0>0$, and $C_k\ge 0$, $k=1,\ldots,m-1$. For such a modified norm, Lemma \ref{Fran} holds in precisely the same way, except with different
constants $c_1,c_2$. The benefit, when $\M=\reals^d$ is that for a particular choice of
constants $C_k$ we have the inner products corresponding to the Sobolev (or Mat\'{e}rn) splines
\cite{matern1986}, 
$$
\kappa(x,\alpha) = C_{m,d} |x|^{m-d/2} K_{d/2-m}(|x-\alpha|) 
$$
where $K$ is a modified Bessel function.
This is achieved when $\| f\|^2_{\mathcal{H}_m(\reals^d) } = \int_{\reals^d} f(x) (1-\Delta)^m f(x) \dif x,$ which is easily accomplished because
$\int_{\reals^d} f(x) \Delta^m f(x) \dif x =(-1)^m \int_{\reals^d} \langle \nabla^m f(x), \nabla^m f(x)\rangle \dif x$ holds by integration by parts.
%
%

\paragraph{Positive definite kernels and their native spaces} The situation above can be turned around, in the sense that we are able to start with a symmetric, \emph{strictly} positive definite kernel $\kappa$, and then construct a corresponding Hilbert space that is the RKHS for it. The term \emph{strictly positive definite} means that for any finite set $\Xi\subset \M$ the interpolation matrix $\calc_{\Xi} = \bigl(\kappa(\zeta,\xi)\bigr)_{(\zeta,\xi)\in \Xi^2}$ is positive definite.  The construction of the corresponding RKHS for such a kernel is described in detail in \cite[\S 10.2]{Wendland-05-1}. The space itself is known as the native space $\caln(\kappa)$, and its inner product is denoted by $\langle\cdot,\cdot\rangle_{\caln(\kappa)}$. Because $\calc_{\Xi} = \bigl(\kappa(\zeta,\xi)\bigr)_{(\zeta,\xi)\in \Xi^2}$ is positive definite, the interpolation problem 
$s_{|_{\Xi}} = f_{|_{\Xi}}$ for 
$s\in \spam_{\xi\in\Xi}{\kappa(\cdot,\xi)}$  
always possesses a unique solution, 
denoted by $I_{\Xi}f$.
Equivalently, $I_{\Xi}f$ can be determined 
by finding the solution to the variational problem 
$\mathrm{argmin}\{\|s\|_{\caln(\kappa)}: s\in \caln(\kappa),\  s_{|_{\Xi}} = f_{|_{\Xi}}\}.$ The relationship between positive definite kernels and their native space inner products translates into a duality between kernel interpolation and the variational problems with interpolatory constraints. This is discussed in detail in the celebrated Golomb-Weinberger paper \cite{Golomb-Weinberger-59-1}.

For a kernel associated with a radial basis function, the native space has its origin in the work of Madych and Nelson (see \cite{MaNe} for an example), where the kernel interpolation problem is recast as a variational problem, one where the interpolant is the minimizer of a Hilbert space norm (or seminorm) over all possible interpolants of data. The native space appellation itself is due to Schaback  \cite{Scha}, who extended this idea to treat kernels on more general domains.

We now turn to native space error estimates for interpolation by positive definite kernels associated with Sobolev native spaces. The following corollary to Lemma \ref{zeros} shows that, 
regardless of the kernel giving rise to a Sobolev native space,  it is always possible to measure the approximation order for interpolation from the native space. This generalizes the previous native space result for $\reals^d$ and $\sph^d$.

\begin{corollary}\label{NSEE}
Let $\kappa: \M \times \M \to \reals :(\eta,\zeta)\mapsto \kappa(\eta,\zeta)$ be a positive definite kernel with native space $\caln(\kappa) \cong W^{m}_2(\M)$, $m>d/2$. Assume that $\Xi\subset \M$ has mesh norm $h\le \inj h_0\Gamma_1$. Then for $f\in W^{m}_2(\M)$, the error incurred by interpolating with 
$\kappa$ at the nodes $\Xi \subset \M$ is
$$|f(x) - I_{\Xi}f(x)| \le C_{\M} h^{m-d/2}\|f\|_{W^{m}_2(\M)}.$$ 
\end{corollary}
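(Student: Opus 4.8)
The plan is to reduce the global interpolation error on $\M$ to a collection of local zeros-lemma estimates on geodesic balls, exploiting the variational (minimal-norm) characterization of the kernel interpolant. Fix $f\in W_2^m(\M)$ and write $e := f - I_\Xi f$. The two facts I will lean on are: first, $e$ vanishes on $\Xi$ by construction of $I_\Xi f$; second, since $\caln(\kappa)\cong W_2^m(\M)$ and $I_\Xi f$ is the minimal-$\caln(\kappa)$-norm interpolant of $f|_\Xi$, one has the Pythagorean-type bound $\|I_\Xi f\|_{W_2^m(\M)} \lesssim \|f\|_{W_2^m(\M)}$ (equivalence of norms plus the fact that $I_\Xi f$ minimizes $\|\cdot\|_{\caln(\kappa)}$ over interpolants of $f|_\Xi$, of which $f$ itself is one), and hence $\|e\|_{W_2^m(\M)} \le \|f\|_{W_2^m(\M)} + \|I_\Xi f\|_{W_2^m(\M)} \lesssim \|f\|_{W_2^m(\M)}$.

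Next I would fix an arbitrary $x\in\M$ and apply the Zeros Lemma (Lemma~\ref{zeros}) to $e$ on the geodesic ball $\b(x,r)$, where I choose the radius to be $r := h/(\Gamma_1 h_0)$. The hypothesis of Lemma~\ref{zeros} requires $r<\inj$ and that the mesh norm of $\Xi\cap\b(x,r)$ in $\b(x,r)$ be at most $\Gamma_1 r h_0$; by the choice of $r$ the latter reads $h \le \Gamma_1 r h_0 = h$, which holds, and $r<\inj$ follows from the assumption $h \le \inj h_0\Gamma_1$. (One should also check that $\b(x,r)$ actually contains enough points of $\Xi$ to realize mesh norm $\le h$ locally — this is where the mesh norm of $\Xi$ in all of $\M$ being $h$, together with $r\ge$ something comparable to $h$, is used; every ball $\b(y,h)\subset\b(x,r)$ meets $\Xi$.) Lemma~\ref{zeros} then gives $|e(x)| \le C_{m,\M}\, r^{m-d/2}\, \|e\|_{W_2^m(\b(x,r))} \le C_{m,\M}\, r^{m-d/2}\, \|e\|_{W_2^m(\M)}$. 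Substituting $r = h/(\Gamma_1 h_0)$ and the bound $\|e\|_{W_2^m(\M)}\lesssim\|f\|_{W_2^m(\M)}$ from the previous step, and absorbing $\Gamma_1, h_0$ into the constant, yields $|e(x)| \le C_\M\, h^{m-d/2}\,\|f\|_{W_2^m(\M)}$. Taking the supremum over $x\in\M$ finishes the proof.

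The main obstacle — really the only non-bookkeeping point — is the stability bound $\|I_\Xi f\|_{W_2^m(\M)}\lesssim\|f\|_{W_2^m(\M)}$. This is immediate in the native-space norm $\|\cdot\|_{\caln(\kappa)}$ because $I_\Xi f$ is the norm-minimal interpolant and $f$ is a competitor, so $\|I_\Xi f\|_{\caln(\kappa)}\le\|f\|_{\caln(\kappa)}$; the isomorphism $\caln(\kappa)\cong W_2^m(\M)$ then transfers this (with a constant depending on the equivalence of norms) to the Sobolev norm. A secondary point is verifying the local mesh-norm condition needed to invoke Lemma~\ref{zeros}: one must be slightly careful that restricting $\Xi$ to $\b(x,r)$ does not worsen the fill distance relative to that ball, but since $r$ is a fixed multiple of $h$ and $h$ is the global mesh norm, any sub-ball of radius $h$ inside $\b(x,r)$ still contains a point of $\Xi$, which is exactly the hypothesis used in Proposition~\ref{stardom_est} (via the ``less stringently'' clause of Lemma~\ref{poly_bounds}). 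Everything else is substitution and constant-chasing.
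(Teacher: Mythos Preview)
Your proposal is correct and follows essentially the same route as the paper: choose $r = h/(\Gamma_1 h_0)$, apply the Zeros Lemma to $e = f - I_\Xi f$ on $\b(x,r)$, bound the local Sobolev norm by the global one, and control $\|e\|_{W_2^m(\M)}$ via the variational property of $I_\Xi$ together with the norm equivalence $\caln(\kappa)\cong W_2^m(\M)$. The only cosmetic difference is that the paper bounds $\|f - I_\Xi f\|_{\caln}\le \|f\|_{\caln}$ directly from the Pythagorean identity (since $I_\Xi$ is the orthogonal projector in $\caln(\kappa)$), whereas you pass through $\|I_\Xi f\|_{\caln}\le \|f\|_{\caln}$ and the triangle inequality; both yield the same estimate up to a constant.
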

\begin{proof}
By picking $r= h/(h_0 \Gamma_1)$, we have $r\le \inj$ and Lemma \ref{zeros} applies to the 
ball $\b(p,r)$, giving
$$|f(x) - I_{\Xi}f(x)|\le C_{\M}\left(\frac{h}{h_0\Gamma_1}\right)^{m-d/2}\|f-I_{\Xi}f\|_{W^{m}_2(\b(p,\frac{h}{h_0\Gamma_1}))} \le C_{\M}\left(h\right)^{m-d/2}\|f-I_{\Xi}f\|_{W^{m}_2(\M)}.$$
Since, by assumption $\|\cdot \|_{W^{m}_2(\M)} \sim \|\cdot\|_{\caln}$, we have the chain of inequalities
$\|f-I_{\Xi}f\|_{W^{m}_2(\M)}\le C \|f-I_{\Xi}f\|_{\caln}\le C\|f\|_{\caln} \le C\|f\|_{W^{m}_2(\M)},$
where the middle inequality follows by the Pythagorean theorem, since $I_{\Xi}$ is an orthogonal projector on the native space.
\end{proof}

\section{The Lagrange Function}\label{lagrange_function}
In this section, the Lagrange function centered at an arbitrary point $\xi\in \Xi$ 
(usually suppressing the subscript $\xi$: $\chi = \chi_{\xi}$) is investigated. 
We begin by showing that for a fairly general class of kernels, $\chi$
is bounded for quasiuniform centers.
In Section~\ref{lagrange_function_decay} it is shown shown that, for a specific class of kernels, 
$\chi$ is actually controlled by a rapidly
decaying function of $\d(x,\xi)/h$. Specifically, in the ball $\b(\xi,\inj)$ about $\xi$,  $|\chi(x)|$ is controlled by $\exp[-\nu \d(x,\xi)/h]$ (this is Proposition \ref{pointwise}).

\subsection{Uniformly Bounded Lagrange Functions}\label{lagrange_function_bnds}
Our first goal is to obtain bounds on the decay of the Lagrange (or fundamental) function
for kernel based interpolation. As before, we denote the native space for a positive
kernel by $\caln(\kappa)$. 
\begin{definition}\label{Lagrange}
Given a positive definite kernel $\kappa:\M^2\to \reals$,
and a finite set $\Xi\subset \M$
we denote the Lagrange function centered 
at $\xi\in \Xi$  
by $\chi_{\xi}$. I.e., $\chi_{\xi}(\zeta) = \delta(\xi,\zeta)$ for $\xi,\zeta\in \Xi$
and 
$\chi_{\xi} \in \spam_{\zeta \in\Xi}\kappa(\cdot,\zeta).$
By the discussion in Section 3, we see that 
$\chi_{\xi} = \mathrm{argmin} \{\|s\|_{\caln(\kappa)}: s(\zeta) = \delta(\xi,\zeta), \zeta\in \Xi\}.$
\end{definition}

We first observe that, when the centers are quasiuniform, the function $\chi$ is bounded.
\begin{lemma}\label{LagrangeBound}
Suppose that $\kappa$ is a positive definite kernel on $\M$ with native space $\caln(\kappa)= W^{m}_2(\M)$, $m>d/2$.
If the centers $\Xi$ are quasiuniform -- namely, there exists a constant $\rho $ such that $h/q\le \rho $, 
-- 
then the Lagrange function $\chi$ is
bounded, with a constant depending on $\kappa$, $m$ and $\rho $ only.
\end{lemma}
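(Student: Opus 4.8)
The plan is to exploit the variational characterization of the Lagrange function together with the Zeros Lemma (Lemma~\ref{zeros}) and the native-space error estimate (Corollary~\ref{NSEE}). The key observation is that $\chi=\chi_\xi$ is the minimal-norm interpolant of the data $\delta(\xi,\cdot)|_\Xi$, so its native-space norm is no larger than that of \emph{any} competing function agreeing with that data on $\Xi$. If I can produce a cheap ``bump'' function $b\in W_2^m(\M)$ with $b(\xi)=1$ and $b(\zeta)=0$ for all other $\zeta\in\Xi$, and control $\|b\|_{W_2^m(\M)}$ in terms of $\rho$ and $m$, then $\|\chi\|_m=\|\chi\|_{\caln(\kappa)}\sim\|\chi\|_{W_2^m(\M)}\le C\|b\|_{W_2^m(\M)}$. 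The natural candidate is a smooth bump supported in the geodesic ball $\b(\xi,q)$, which automatically vanishes at every other center since the separation radius is $q$; rescaling a fixed bump to radius $\sim q$ and estimating its covariant Sobolev norm via Lemma~\ref{Fran} gives $\|b\|_{W_2^m(\M)}\le C_{m,\M}\, q^{d/2-m}$. Hence $\|\chi\|_{W_2^m(\M)}\le C q^{d/2-m}$, with $C$ depending only on $m$, $\M$ and (through the constants in Lemma~\ref{Fran}) the injectivity radius.

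Next I would convert this Sobolev bound into a pointwise $L_\infty$ bound using the fact that $\chi$ has many zeros. Fix an arbitrary point $x\in\M$. If $x$ is close to some center, say $\d(x,\zeta)\le $ a fixed multiple of $h$, I apply Lemma~\ref{zeros} on a geodesic ball $\b(\zeta,r)$ with $r\asymp h/(h_0\Gamma_1)$ (chosen so that the mesh-norm hypothesis $h\le \Gamma_1 r h_0$ holds and $r<\inj$, which is legitimate for $h$ small, and for $h$ bounded away from $0$ the conclusion is trivial since there are only finitely many such configurations): this yields
\[
|\chi(x)|\le C_{m,\M}\, r^{m-d/2}\,\|\chi\|_{W_2^m(\b(\zeta,r))}\le C_{m,\M}\, h^{m-d/2}\,\|\chi\|_{W_2^m(\M)}.
\]
Combining with the bound $\|\chi\|_{W_2^m(\M)}\le C q^{d/2-m}$ gives $|\chi(x)|\le C (h/q)^{m-d/2}\le C\rho^{m-d/2}$, which is exactly the desired estimate depending only on $\kappa$, $m$ and $\rho$. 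Since every $x\in\M$ lies within distance $h$ of some center (that is the definition of the mesh norm), this covers all $x$, and taking the supremum over $x$ finishes the argument.

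The main obstacle I anticipate is the construction and norm estimate of the bump function $b$ in a way that is genuinely uniform over $\M$ and over the center $\xi$. One must fix a single smooth profile $\psi:\reals^d\to\reals$ with $\psi(0)=1$, $\spam$ of $\psi$ inside $B(0,1)$, transport it to $T_\xi\M$ via dilation by $q$ (using that $q<\inj$, again valid for $q$ small), and then push forward through $\Exp_\xi$; Lemma~\ref{Fran} guarantees the resulting $W_2^m(\b(\xi,q))$-norm is comparable to the Euclidean $W_2^m(B(0,q))$-norm of the dilated profile \emph{with constants independent of $\xi$}, and a change of variables gives the $q^{d/2-m}$ scaling. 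A secondary technical point is handling the regime where $q$ (or $h$) is not small relative to $\inj$: there the number of centers is bounded by a constant depending only on $\M$ and $\rho$, the interpolation matrix $\calc_\Xi$ is uniformly well-conditioned on that finite family of geometries, and $\|\chi\|_\infty$ is trivially bounded; alternatively one can simply absorb this case into the constant. Finally, I should be careful that $\|\cdot\|_{\caln(\kappa)}$ and $\|\cdot\|_{W_2^m(\M)}$ are used interchangeably only up to the fixed equivalence constants coming from $\caln(\kappa)\cong W_2^m(\M)$, which is exactly what the hypothesis provides.
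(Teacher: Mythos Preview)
Your strategy is very close to the paper's, and the ingredients (the bump supported in $\b(\xi,q)$, Lemma~\ref{Fran} for the scaling $\|b\|_m\lesssim q^{d/2-m}$, and the Zeros Lemma to pass to $L_\infty$) are exactly the right ones. There is, however, a small oversight in the last step: you propose to apply Lemma~\ref{zeros} directly to $\chi$ on a ball $\b(\zeta,r)$ with $r\asymp h/(h_0\Gamma_1)$, but $\chi$ does \emph{not} vanish on all of $\Xi$---it equals $1$ at $\xi$. Whenever $\xi$ lies in the ball (which certainly happens for $x$ near $\xi$, since $r\gg h$), the hypothesis $u|_{\Xi\cap\b(\zeta,r)}=0$ fails. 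One can patch this by either (i) applying the Zeros Lemma to $\chi-b$ instead of $\chi$ (this difference vanishes on all of $\Xi$, and $\|b\|_\infty\le 1$, $\|\chi-b\|_m\le 2\|b\|_m$), or (ii) observing that $\Xi\setminus\{\xi\}$ still has mesh norm $\le Ch$ and using it as the zero set.

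The paper's argument is a minor rearrangement that sidesteps this issue entirely. Rather than bounding $\|\chi\|_m$ and then invoking the Zeros Lemma, it notes that $\chi=I_\Xi\psi$ (your bump $b$ is their $\psi$) and applies Corollary~\ref{NSEE} to $\psi$ directly:
\[
\|\psi-\chi\|_\infty\le C\,h^{m-d/2}\|\psi\|_m\le C\,(h/q)^{m-d/2}\le C\rho^{m-d/2},
\]
whence $\|\chi\|_\infty\le \|\psi\|_\infty+\|\psi-\chi\|_\infty\le 1+C\rho^{m-d/2}$. This is really the same idea (Corollary~\ref{NSEE} \emph{is} the Zeros Lemma applied to $f-I_\Xi f$), but packaging it as an interpolation-error estimate automatically produces a function that vanishes on all of $\Xi$, so no special care near $\xi$ is needed. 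Your discussion of the regime $q\gtrsim\inj$ is unnecessary once one argues this way, since Corollary~\ref{NSEE} already contains the required smallness hypothesis on $h$.
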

\begin{proof}
Let $\psi$ be a 'bump' function with support inside the ball 
$\b(\xi,q) = \{\alpha\in \man: d(\xi,\alpha)<q\}$ obtained by dilating a univariate function:
$$\psi(\alpha) = \sigma\left(\frac{\d(\xi,\alpha)}{q}\right)$$
where 
$\sigma\in C^{\infty}([0,\infty))$ is decreasing, has support in $[0,1)$, and 
satisfies $\sigma(0)=1$.  
The Lagrange function interpolates $\psi$, and Corollary \ref{NSEE} provides the
estimate:
$$\|\psi - \chi\|_{\infty} \le C h^{m-d/2} \|\psi\|_m.$$
From its definition, it is clear that $\psi$ is bounded: $\|\psi\|_{\infty}$ is $1$. Consequently,
$$\|\chi\|_{\infty} \le 1 + C h^{m-d/2} \|\psi\|_m.$$
Thus, we need only to estimate this last
quantity. Since $\psi$ is a function of the distance from $\xi$ only, we can use Lemma \ref{Fran} to estimate it. Note that, if $\Exp_{\xi}(x) = \alpha$ 
then
$\psi\circ \Exp_{\xi} (x)= \sigma\left(\frac{\d(\Exp_{\xi}(x),\Exp_{\xi}(0))}{q}\right) = \sigma\left(\frac{|x|}{q}\right)$.
So $\|\psi\|_m $ is controlled by 
$\|\sigma\left(\frac{|\cdot|}{q}\right)\|_m \le C_{m} q^{d/2-m}$
which means that
$$\|\chi\|_{\infty} \le 1 + C \rho ^{m-d/2}.$$
Here $C$ is the constant determined by the interpolation error, the constant from Lemma \ref{Fran} and  the $m^{\mathrm{th}}$ Sobolev norm of the univariate function $\sigma$. 
\end{proof}

\subsection{Rapid Decay of Lagrange Functions}\label{lagrange_function_decay}
%
For the  kernels $\kappa_{m,\M},$ we can improve significantly over Lemma \ref{LagrangeBound}, 
by employing an argument of Matveev \cite{Mat}, 
to obtain very rapid decay of the Lagrange function. The strong metric isomorphisms provided by (\ref{isometry}) and Lemma \ref{Fran} permit us, roughly, to ignore the manifold and carry out most of our analysis on the tangent plane. 

This has the additional benefit that, although it is not compact, one may take $\M = \reals^d$.  
In this case, the Riemannian distance is simply the Euclidean distance
$\d(x,y) = |x-y|$ and the exponential map at $\xi$ is nothing more than translation by $\xi$:
$\Exp_{\xi}(x) = x +\xi$.
Furthermore, the injectivity radius is $\inj=\infty$, the constants from (\ref{isometry}) are $\Gamma_1= \Gamma_2=1$, 
and, likewise, the constants from Lemma \ref{Fran} are $c_1= c_2=1$.
The underlying kernel is the one associated with the Sobolev norm on $\reals^d$
and is very similar to the so-called Sobolev (or Mat\'{e}rn) kernels. 
In fact, a benign modification to these arguments, namely, by replacing the Sobolev norm with either 
the (previously defined) Sobolev seminorm (in fact, this is the variational problem originally considered by Matveev \cite{Mat}) 
$$|f|^2_{m,\Omega} 
= \int_{\Omega} \langle \nabla^m f(x), \nabla^m f(x)\rangle \dif x$$ 
or with a slightly reweighted Sobolev norm
$$\| f\|^2_{\mathcal{H}_m(\Omega)} := \sum_{k=0}^m C_k \int_{\Omega} \langle \nabla^k f(x), \nabla^k f(x)\rangle \dif x,$$ 
as discussed in Section~\ref{native_space_kernels} gives totally equivalent results for the surface splines and the Sobolev splines, respectively.

The pointwise Lagrange function estimate is essentially a ``bulk chasing'' argument, adapted
from \cite[Lemma 5]{Mat}, where the central observation is that the ``bulk'' of the tail
of the Lagrange function's Sobolev norm is always contained in a narrow annulus.
This is a property shared by exponentially decaying functions: the integral of the tail of a nonnegative function of this type can be controlled by the integral on a sufficiently wide annulus. 
The next lemma makes this precise:
\begin{lemma}\label{mainprop}
Assume that $\Xi\subset \man$ has mesh norm $h< \Gamma_1 h_0 \min(1,\inj/3)$.
There exists a constant $\epsilon\in (0,1)$ (depending on $d$ and $m$) such that for $1\le t<\frac{\inj h_0 \Gamma_1}{3 h}$
\begin{equation}\label{E:mainprop}
\|\chi\|_{m,\comp\left(\xi,3t\Rad\right)} \le \epsilon \|\chi\|_{m,\comp\left(\xi,3(t-1)\Rad\right)}, 
\end{equation}
where $\Gamma_1$ is the constant from the left hand side of (\ref{isometry}), $\inj$ is the radius of injectivity and $h_0$ is the constant from Lemma \ref{genball}.
\end{lemma}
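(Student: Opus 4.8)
The plan is to exploit the variational characterization of $\chi$: among all functions in $W_2^m$ agreeing with $\chi$ on $\Xi$, it minimizes $\|\cdot\|_m$. The strategy, following Matveev, is to build a competitor $\widetilde\chi$ that agrees with $\chi$ on $\Xi$ but is ``cut off'' so that it vanishes outside the complement of a smaller ball, and then use minimality together with the zeros lemma to trap the bulk of $\|\chi\|_{m,\comp(\xi,3(t-1)\Rad)}$ inside the thin annulus $\a\bigl(\xi, \text{(a few units)}, 3\Rad\bigr)$ between radii $3(t-1)\Rad$ and $3t\Rad$. Concretely, first I would transport the whole problem to the tangent space at $\xi$ via $\Exp_\xi$, using Lemma~\ref{Fran} and (\ref{isometry}) to replace manifold Sobolev norms and geodesic balls by their Euclidean counterparts up to the $r$-dependent (but $\xi$-independent) constants $c_1,c_2,\Gamma_1,\Gamma_2$; the hypothesis $t < \inj h_0\Gamma_1/(3h)$ is exactly what guarantees the relevant balls sit inside the injectivity-radius ball where this transport is valid.

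The heart is the cutoff construction in the annulus $A$ of thickness comparable to $3\Rad$. I would fix a smooth radial bump $\eta$ that is $1$ on $\comp(\xi,3t\Rad)$ (Euclidean picture: outside the outer sphere), $0$ on $\b(\xi,3(t-1)\Rad)$ (inside the inner sphere), with $|\partial^j\eta|$ controlled by $(3\Rad)^{-j} = (\Gamma_1 h_0/h)^j$ for $j\le m$; set $\widetilde\chi := \eta\chi$. Then $\widetilde\chi$ still interpolates the data, since outside $\b(\xi,3(t-1)\Rad)$ we have $\widetilde\chi = \chi$ and every point of $\Xi$ inside that ball is a zero of $\chi$ hence of $\widetilde\chi$ (using that $\chi$ vanishes on $\Xi\setminus\{\xi\}$ and $\xi$ itself is far out when $t\ge 1$, so $\widetilde\chi(\xi)=\chi(\xi)=0$ — here I should double-check the base case $t=1$ carefully, but the radii are arranged so this is fine). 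By minimality, $\|\chi\|_m \le \|\widetilde\chi\|_m$, so
\begin{equation*}
\|\chi\|^2_{m,\comp(\xi,3(t-1)\Rad)} \le \|\widetilde\chi\|^2_{m,\comp(\xi,3(t-1)\Rad)} = \|\chi\|^2_{m,A} + \|\chi\|^2_{m,\comp(\xi,3t\Rad)},
\end{equation*}
where the middle term comes from the Leibniz expansion $\nabla^k(\eta\chi) = \sum_{j\le k}\binom{k}{j}\nabla^j\eta\otimes\nabla^{k-j}\chi$ restricted to $A$. Expanding the product rule, the term with all derivatives on $\chi$ is absorbed into $\|\chi\|^2_{m,A}$; every other term carries at least one factor $|\nabla^j\eta| \lesssim (h/(\Gamma_1 h_0))^{-j}$ multiplying a lower-order Sobolev norm of $\chi$ on $A$, which must be bootstrapped up.

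That bootstrapping is where the zeros lemma does its work and is the step I expect to be the main obstacle. On the annulus $A$ — or rather on a covering of $A$ by Euclidean balls of radius comparable to $3\Rad$ — the function $\chi$ vanishes on $\Xi$, and the mesh-norm hypothesis $h\le \Gamma_1 h_0 (3\Rad)/\text{(const)}$ arranges that each such ball satisfies the hypothesis $h(X,B)\le h_0\cdot(\text{radius})$ of Lemma~\ref{genball}. Applying (\ref{key_est_p}) ball-by-ball and summing, one gets $\sum_{k\le m}(3\Rad)^{k-m}|\chi|_{W_2^k(A)} \lesssim |\chi|_{W_2^m(\widehat A)}$ on a slightly fattened annulus $\widehat A$. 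Feeding this back, all the ``error'' terms from the Leibniz expansion collapse to a constant (independent of $t$, $h$, $\xi$) times $\|\chi\|^2_{m,\widehat A}$, and $\widehat A$ can be taken to be (contained in) $\a(\xi,\text{const},3\Rad)$, i.e.\ a fixed number of annular units near radius $3(t-1)\Rad$. This yields
\begin{equation*}
\|\chi\|^2_{m,\comp(\xi,3(t-1)\Rad)} \le C\,\|\chi\|^2_{m,\widehat A} + \|\chi\|^2_{m,\comp(\xi,3t\Rad)} \le C\bigl(\|\chi\|^2_{m,\comp(\xi,3(t-1)\Rad)} - \|\chi\|^2_{m,\comp(\xi,3t\Rad)}\bigr) + \|\chi\|^2_{m,\comp(\xi,3t\Rad)},
\end{equation*}
since $\widehat A \subset \comp(\xi,3(t-1)\Rad)\setminus\comp(\xi,3t\Rad)$ (this is the real content: the ``thin annulus'' must lie between the two radii appearing in the statement — I may need to adjust the numerical constant $3$, or the number of annular units absorbed, to make the inclusion hold exactly). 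Rearranging gives $\|\chi\|^2_{m,\comp(\xi,3t\Rad)}(1+\text{something}) \le C\,\|\chi\|^2_{m,\comp(\xi,3(t-1)\Rad)}$, hence $\|\chi\|_{m,\comp(\xi,3t\Rad)} \le \epsilon\|\chi\|_{m,\comp(\xi,3(t-1)\Rad)}$ with $\epsilon = \sqrt{C/(C+1)} \in (0,1)$ depending only on $d$ and $m$ through the constants in Lemma~\ref{genball} and the Leibniz bounds. The delicate bookkeeping is tracking that none of the constants picks up a dependence on $t$ or on the center, which is guaranteed precisely because $3\Rad$ is the common scale of every ball in the covering and Lemma~\ref{genball} is scale-homogeneous.
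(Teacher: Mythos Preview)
Your overall strategy (variational competitor built with a cutoff, Leibniz rule on the transition annulus, then Lemma~\ref{genball} on a covering by balls of radius $\rad$) matches the paper's proof exactly, and your handling of the transport to the tangent space and the ball-covering is fine. But the cutoff is oriented the wrong way, and this breaks the argument in two places.

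First, your $\eta$ is $0$ on $\b(\xi,3(t-1)\Rad)$ and $1$ on $\comp(\xi,3t\Rad)$. For $t>1$ the center $\xi$ lies in the inner ball, so $\widetilde\chi(\xi)=\eta(\xi)\chi(\xi)=0$, whereas $\chi(\xi)=1$. Thus $\widetilde\chi$ does \emph{not} interpolate the data at $\xi$, and the minimality $\|\chi\|_m\le\|\widetilde\chi\|_m$ is unavailable. (Your parenthetical ``$\xi$ itself is far out when $t\ge1$, so $\widetilde\chi(\xi)=\chi(\xi)=0$'' is simply mistaken: $\xi$ is the center of every ball in sight and $\chi(\xi)=1$.)

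Second, even if minimality did hold, the inequality you derive is vacuous. Writing $a=\|\chi\|^2_{m,\comp(\xi,3(t-1)\Rad)}$ and $b=\|\chi\|^2_{m,\comp(\xi,3t\Rad)}$, your chain gives $a\le C(a-b)+b$, i.e.\ $(1-C)(a-b)\le 0$, which for $C\ge1$ says nothing. The useful inequality is $b\le K(a-b)$, whence $b\le\tfrac{K}{1+K}a$ and $\epsilon=\sqrt{K/(1+K)}$. To land on \emph{that} inequality you must put the smaller (outer) norm on the left-hand side, which requires the opposite cutoff.

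The fix is precisely what the paper does: take $\phi$ equal to $1$ on $\b(\xi,(3t-2)\Rad)$ and $0$ outside $\b(\xi,(3t-1)\Rad)$. Then $\phi\chi$ agrees with $\chi$ near $\xi$ (so $\phi\chi(\xi)=1$) and vanishes far away (where every other $\zeta\in\Xi$ already has $\chi(\zeta)=0$), so it is an admissible competitor. From $\|\chi\|_m^2\le\|\phi\chi\|_m^2$ one subtracts the common inner piece $\|\chi\|^2_{m,\b(\xi,(3t-2)\Rad)}$ to get
\[
\|\chi\|^2_{m,\comp(\xi,(3t-2)\Rad)}\ \le\ \|\phi\chi\|^2_{m,\aa},
\]
with $\aa$ the transition annulus. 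Your Leibniz/zeros-lemma machinery then bounds the right-hand side by $K\|\chi\|^2_{m,\a(\xi,t,3\Rad)}=K(a-b)$, and since the left-hand side dominates $b$, the rearrangement goes through. Everything downstream in your sketch is then correct.
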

\begin{proof} We start by constructing a $C^{\infty}$ cutoff, $\phi$, that vanishes outside
$\b(\xi,(3t-1) \rad)$ and which equals $1$ on $\b(\xi,(3t-2)\rad)$. 
Let 
$$\phi(\alpha) = \sigma\left(\frac{\Gamma_1 h_0}{h}\d(\alpha,\xi)-3(t-1)\right)$$
where  $\sigma(T)$ equals $1$ for $T<1$ and vanishes for $T>2$. 
Since $\phi$ is a function of the distance from $\xi$ only, 
we can rewrite it as a composition of a univariate function and $\mathrm{Exp}_{\xi}$. Let  $\mathrm{\Exp}_{\xi}(x) = \alpha$, 
then
\begin{equation}\label{E:cutoff}
\phi\circ \mathrm{Exp}_{\xi} (x)= \sigma\left(\frac{\Gamma_1h_0}{h}\d(\Exp_{\xi}(x),\Exp_{\xi}(0)) -3(t-1)\right) = \sigma\left(\frac{\Gamma_1h_0}{h}|x |-3(t-1) \right).
\end{equation}

The region where $\phi$ is nonconstant -- $\a(\xi,3t-1,\rad)$ -- is 
 the middle third of the annulus  $\a(\xi,t,3\rad)$.
In order to simplify notation, we'll denote the annulus where $\phi$ transitions 
from $1$ to $0$ by the gothic letter $\aa$. That is,
$$\aa:= \a\left(\xi,3t-1,\rad\right).$$
Likewise, we'll denote the region where $\phi$ equals $1$ by
$$\bb:=\b\left(\xi,(3t-2)\rad\right),$$ 
and by $\bbcomp$ its complement $\man \backslash \bb$.

By the variational property of $\chi$, the Sobolev norm of $\chi$ is less 
than that of $\phi \chi$, since the two functions coincide on $\Xi$. 
Moreover, because of the compact support of $\phi$, the Sobolev norm of $\phi \chi$ can be
expressed in terms of its behavior on $\bb\cup \aa$, noting that on $\bb$ it is identical to $\chi$.
 Thus, we have that
$\|\chi\|_{m}^2\le 
\|\chi\|_{m,\mathfrak{b}}^2 + \|\phi \chi\|_{m,\aa}^2.$
By subtracting and applying Lemma \ref{Fran}, we obtain the estimate
\begin{equation}\label{TangentAnnulus}
\|\chi\|_{m,\bbcomp}^2\le \|\phi \chi\|_{m,\aa}^2\le c_2^{2}\|\ph\, \u\|_{m,\A}^2.
\end{equation}
where we identify corresponding annulus of interest in $\mathrm{T}_{\xi}M$ by
$$\A = A(0,3t-1,\rad).$$
The annulus $\A$ is the preimage, via the exponential map, of $\aa$,  $\u:= \chi\circ\Exp_{\xi}$, and $\ph = \phi\circ \Exp_{\xi}$.
\begin{center}
\psfrag{inner}{$\A = A\left(0,3t-1,\rad\right)$}
\psfrag{outer}{$A\left(0,t,3\rad\right) $}
\psfrag{Caption}{Figure 1: Important annuli on the tangent space $T_{\xi}M$. }
\includegraphics[height=4in]{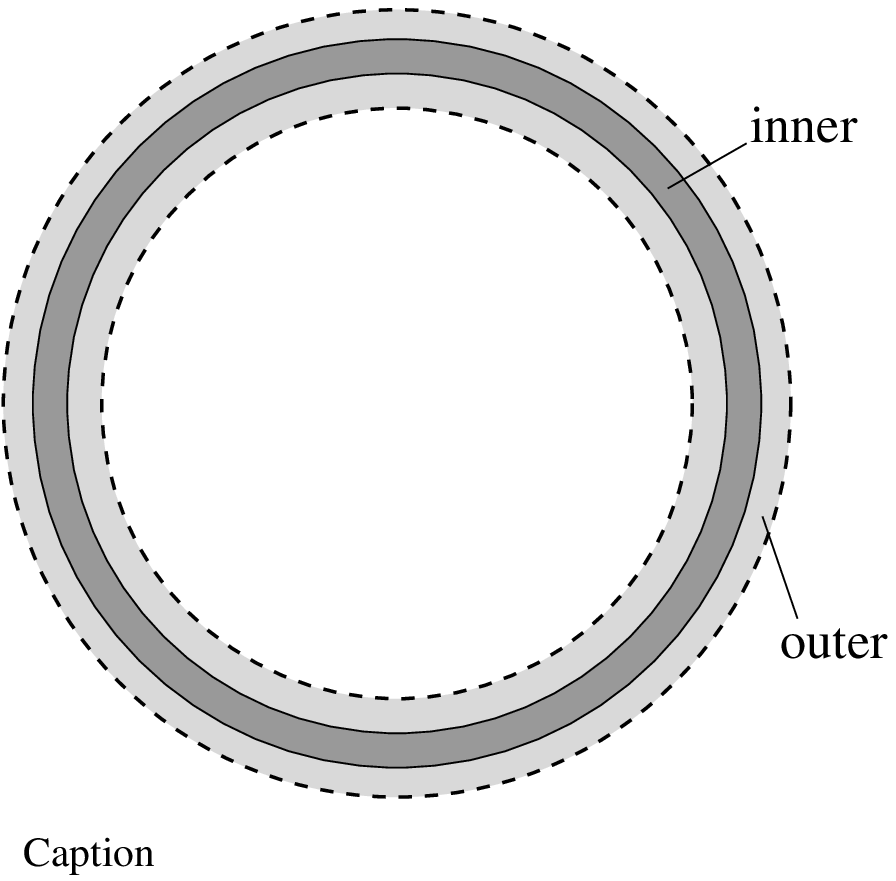}
\end{center}

Our goal for the rest of the proof is to estimate the quantity $\|\ph\, \u \|_{m,\A}^2.$
This involves two key estimates. The first estimate is concerned with the effect of multiplying by the cutoff (which we can tackle by using Leibniz' rule). The effect is independent of the radius $t$, and allows us to control this norm in terms of a combination of seminorms of $\u$. The second estimate will control this combination of seminorms via partitioning the annulus into small caps where Lemma \ref{genball} can be applied.

{\bf Estimate 1} The desired result is to control $\|\ph \, \,\u\|_{m,\A}^2$
by a biased Sobolev norm of $\u$ on $\A$  -- this is (\ref{E:productrule}).

We observe, first of all, that we can simplify matters by applying (\ref{E:cutoff}) and the product rule:
\begin{eqnarray*}
\|\ph \,\u\|_{m,\A}^2 
%
&=& \sum_{|\gamma| \le m} \binom{|\gamma|}{ \gamma} 
\int_{\A}  
  \left| 
    \sum_{\beta \le \gamma} 
      \binom{\gamma}{\beta}  D^{\gamma-\beta} \ph(x) \, D^{\beta}\u(x) 
  \right|^2\, 
\dif x\\
&\le& C_{m,d}\sum_{|\gamma| \le m} 
  \int_{\A} 
    \sum_{\beta \le \gamma} 
      \left|  
        D^{\gamma-\beta}
        \left[
          \sigma\left(\frac{\Gamma_1 h_0}{h}|x|-3(t-1) \right)
        \right]
      \right|^2 \, 
      \left|
        D^{\beta}\u(x) 
      \right|^2\, 
  \dif x\\
\end{eqnarray*}
A direct application of the inequality,
$|D^{\gamma}\sigma(r|x|-T)| \le C r^{|\gamma|}\|\sigma\|_{C^{(m)}}$, which holds for $T>0$, shows that $\|\ph \,\u\|_{m,\A}^2 $ is bounded by
$ C_{m,d} \sum_{|\gamma| \le m} 
  \int_{\A}  
    \sum_{\beta \le \gamma} 
        \left(\frac{\Gamma_1 h_0}{h}\right)^{2(|\gamma|-|\beta|)}
        \left|D^{\beta}\u (x)\right|^2
  \,  \dif x.$ 
Because $ \rad<1$, and 
$\left(\frac{\Gamma_1h_0}{h}\right)^{|\gamma|-|\beta|}
\le \left(\frac{\Gamma_1 h_0}{h}\right)^{m-|\beta|}$, 
we are left, after rearranging terms, with the estimate
\begin{equation}\label{E:productrule}
\|\ph \, \u\|_{m,\A}^2\le 
C_{m,d} \sum_{|\beta|\le m}  \left(\frac{\Gamma_1 h_0}{h}\right)^{2(m-|\beta|)}
\int_{\A}  \left|D^{\beta}\u (x)\right|^2 \, \dif x
\end{equation}
%
%
%

{\bf Estimate 2} We are now in a position to apply Lemma \ref{genball}. We cover $\A$ with a sequence of balls $(B_j)_{j\in \mathcal{J}}$ such that
\begin{itemize}
\item each ball is of radius $\rad$.
\item each ball is contained in the large annulus $A(0,t,3\rad)$ (displayed in Figure 1). 
\item every $x\in A(0,t,3\rad) $ 
is in at most $N(d)$ balls $B_j$, with $N(d)$ depending only on the spatial dimension and not on $t,h, h_0, \mathcal{J}$, etc.
\end{itemize}

Observe that the combination of seminorms from (\ref{E:productrule}) can be bounded by corresponding norms carried by the balls $B_j$. The mesh norm in each $B_j$ is $ h/\Gamma_1\le h_0$ (at most), so Lemma \ref{genball} applies, with 
$r= \rad$.
\begin{eqnarray*} 
\sum_{|\beta|\le m}
  \left(
    \rad
  \right)^{2(|\beta|-m)} 
  \int_{\A} 
    |D^{\beta} \u|^2 
&\le& 
\sum_{|\beta| \le m} 
  \sum_{j\in \mathcal{J}} 
    \left(
      \rad
    \right)^{2(|\beta|-m)} 
    \int_{B_j} 
      |D^{\beta} \u|^2\\
&\le& C_{m,\M} \sum_{j\in J}|\u|_{W_2^m(B_j)}^2\\
&\le& N(d) C_{m,\M} | \u|_{W_2^m\left(A(0,t,3\rad)\right)}^2
\end{eqnarray*}
The second inequality is Lemma \ref{genball}, while the third inequality is a consequence of the finite intersection property of the balls $B_j$. By a direct application of Lemma \ref{Fran}, we obtain
$$ 
\sum_{|\beta|\le m}
  \left(
    \rad
  \right)^{2(|\beta|-m)} 
  \int_{\A} 
    |D^{\beta} \u|^2 
\le c_1^{-2} N(d) C_{m,\M} \| \chi \|_{m,\a(\alpha,t,3\rad)}^2.$$

Putting this together with (\ref{E:productrule}) and (\ref{TangentAnnulus}), we see that the
Sobolev norm of the Lagrange function taken over the complement of a ball can be controlled
by the Sobolev norm on a thin annulus near its boundary:
\begin{eqnarray}
\|\chi\|_{m,\bbcomp}^2 
&\le& 
C_{m,\M} \|\ph\, \u \|_{m,\A}^2\nonumber\\
& \le& 
C_{m,\M} \|\chi\|_{m,\a\left(\xi,t,3\rad \right)}^2
=:
K \|\chi \|_{m,\a\left(\xi,t,3\rad\right)}^2
\label{sqrnorms}.
\end{eqnarray}
\begin{center}
\psfrag{C+}{$\bb^+$}
\psfrag{C-}{$\bb^-$}
\psfrag{annulus}{$\a(\xi,t,3\rad)$}
\psfrag{I}{$3(t-1)\rad$}
\psfrag{O}{$3t\rad$}
\psfrag{Caption}{Figure 2: Important sets on the manifold.}
\includegraphics[height=3in]{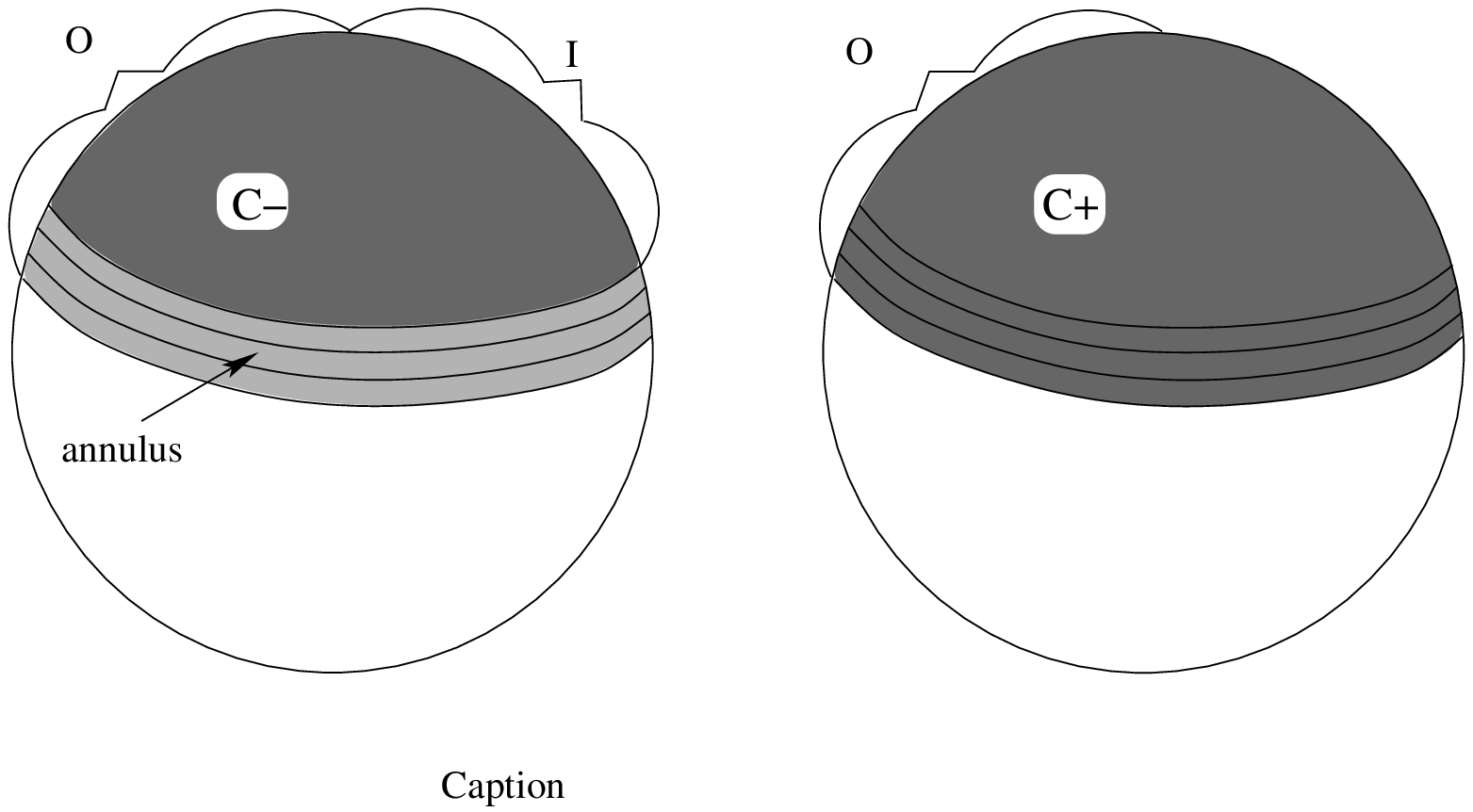}
\end{center}

What remains, is to reinterpret this as an inequality involving only complements of
concentric balls. 
It follows from their definitions that 
$\bb = 
\b\left(\xi, (3t-2)\rad\right)
\subset 
\b\left(\xi,3t\rad\right)
=:\bb^+$. 
This implies that $(\bb^+)^{\complement}\subset \bbcomp$ and,
setting 
$ \bb^-:= \b\left(\xi,3(t-1)\rad\right)$, 
we make the simple but useful observation that
$\a(\xi,t,3\rad) 
= 
(\bb^-)^{\complement}\setminus (\bb^+)^{\complement}.$ 
Applying this to (\ref{sqrnorms}) we get
$$
\|\chi\|_{m,(\bb^+)^{\complement}}^2\le 
K \left(\|\chi\|_{m,(\bb^-)^{\complement}}^2 - \|\chi\|_{m,(\bb^+)^{\complement}}^2\right),
$$
and, consequently:
$$
\|\chi\|_{m,\comp\left(\xi,3t\rad\right)}
\le 
\epsilon \|\chi\|_{m,\comp\left(\xi,3(t-1)\rad\right)}
$$
with $\epsilon:= \sqrt{\dfrac{K}{1+K}}<1$.
\end{proof}
For a sufficiently large value of $t$ less than a fixed multiple of the radius of injectivity, the previous lemma could be repeated several times, to obtain better estimates,
resulting in the following Corollary.
If we have $\M = \reals^d$, the radius of injectivity is $\inj=\infty$, and $t$ may become arbitrarily large. Furthermore, in the Euclidean case one may choose the surface splines as the kernel (instead of $\kappa_{\M,m}$), which results in \cite[Corollary p. 130]{Mat}. Alternatively, one may choose the Sobolev splines (see Section~\ref{native_space_kernels}) and achieve precisely 
the following:
\begin{corollary}\label{SobExpDec}
There is a constant $\nu>0$ such that, if $\Xi$ has mesh norm $h<\Gamma_1 h_0\min(1,\inj/3)$ then for $0\le T<\inj$ we have the estimate 
\begin{eqnarray*}
\|\chi\|_{m,\comp(\xi,T)} 
&\le& C_{m,\M} \, e^{-\nu(\frac{T}{h})} \|\chi\|_{m,\man}\\
&\le&  C_{m,\M} \, e^{-\nu(\frac{T}{h})} \, q^{d/2-m}
\end{eqnarray*}
where $q$ is the minimal separation distance between points of $\Xi$.
\end{corollary}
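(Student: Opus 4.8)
The plan is to iterate Lemma \ref{mainprop} a maximal number of times and then convert the geometric-decay conclusion into exponential decay in $T/h$. First I would fix $T$ with $0 \le T < \inj$ and set $r = \rad = h/(\Gamma_1 h_0)$, the annulus thickness appearing throughout Lemma \ref{mainprop}. For an integer $n \ge 1$ with $3nr \le T$ one may chain the inequality \eqref{E:mainprop} starting from $t = n$ down to $t = 1$, which requires that each index $t$ in the range satisfy $1 \le t < \frac{\inj h_0 \Gamma_1}{3h} = \frac{\inj}{3r}$; since $3nr \le T < \inj$ forces $n < \inj/(3r)$, every index used is admissible. The telescoped product gives
\begin{equation*}
\|\chi\|_{m,\comp(\xi,3nr)} \le \epsilon^{\,n}\,\|\chi\|_{m,\comp(\xi,0)} = \epsilon^{\,n}\,\|\chi\|_{m,\M},
\end{equation*}
with $\epsilon = \sqrt{K/(1+K)} \in (0,1)$ the constant from Lemma \ref{mainprop}.

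Next I would choose $n$ as large as the constraint $3nr \le T$ permits, namely $n = \lfloor T/(3r) \rfloor$. Because $\comp(\xi,T) \subseteq \comp(\xi,3nr)$ (as $3nr \le T$), monotonicity of the Sobolev norm in the domain gives $\|\chi\|_{m,\comp(\xi,T)} \le \epsilon^{\,n}\|\chi\|_{m,\M}$. Writing $\epsilon = e^{-c}$ with $c = -\log\epsilon > 0$, and using $n \ge T/(3r) - 1 = \frac{\Gamma_1 h_0}{3}\cdot\frac{T}{h} - 1$, one obtains
\begin{equation*}
\epsilon^{\,n} \le e^{-c\left(\frac{\Gamma_1 h_0}{3}\cdot\frac{T}{h} - 1\right)} = e^{c}\, e^{-\nu T/h},\qquad \nu := \frac{c\,\Gamma_1 h_0}{3} > 0.
\end{equation*}
Absorbing the harmless factor $e^{c}$ into the constant yields the first asserted bound $\|\chi\|_{m,\comp(\xi,T)} \le C_{m,\M}\, e^{-\nu T/h}\,\|\chi\|_{m,\M}$. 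One should also check the degenerate range $T/(3r) < 1$, i.e. $n = 0$; there the claimed inequality holds trivially with $C_{m,\M} \ge e^{\nu/3\cdot\text{(something)}}$ since $e^{-\nu T/h} \ge e^{-\nu\inj/h}$ is bounded below only after using $T < \inj$, so in fact it is cleaner to note $\|\chi\|_{m,\comp(\xi,T)} \le \|\chi\|_{m,\M} \le e^{\nu}e^{-\nu T/h}\|\chi\|_{m,\M}$ whenever $T/h \le 1$, and otherwise run the iteration; choosing $C_{m,\M}$ to dominate both cases finishes it.

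For the second inequality I would bound $\|\chi\|_{m,\M}$ directly. Since $\chi = \chi_\xi$ is the norm-minimal element of $\caln(\kappa) = W_2^m(\M)$ interpolating the data $\delta(\xi,\cdot)$ on $\Xi$, its norm is no larger than that of any competing interpolant; taking the bump function $\psi(\alpha) = \sigma(\d(\xi,\alpha)/q)$ supported in $\b(\xi,q)$ exactly as in the proof of Lemma \ref{LagrangeBound}, we get $\|\chi\|_m \le \|\psi\|_m$. By Lemma \ref{Fran}, $\|\psi\|_m \le c_1^{-1}\|\psi\circ\Exp_\xi\|_{W_2^m(B(0,q))} = c_1^{-1}\|\sigma(|\cdot|/q)\|_{W_2^m(B(0,q))} \le C_m\, q^{d/2-m}$, the last step being the standard scaling of Sobolev norms of a fixed profile $\sigma$. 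Substituting this into the first bound gives $\|\chi\|_{m,\comp(\xi,T)} \le C_{m,\M}\, e^{-\nu T/h}\, q^{d/2-m}$, as claimed.

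The only genuinely delicate point is bookkeeping the admissibility of all the indices $t$ used in the iteration against the hypothesis $1 \le t < \frac{\inj h_0 \Gamma_1}{3h}$ of Lemma \ref{mainprop}, together with the initial assumption $h < \Gamma_1 h_0 \min(1,\inj/3)$ which guarantees $r < 1$ and $r < \inj/3$ so that at least $t=1$ is legal; everything else is the routine conversion of $\epsilon^n$ into $e^{-\nu T/h}$ and the scaling estimate for $\|\psi\|_m$. I expect no real obstacle beyond keeping the constants straight.
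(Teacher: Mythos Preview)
Your proposal is correct and follows essentially the same approach as the paper: iterate Lemma~\ref{mainprop} $n=\lfloor T/(3r)\rfloor$ times, convert $\epsilon^n$ into $e^{-\nu T/h}$ with $\nu=\frac{\Gamma_1 h_0}{3}|\log\epsilon|$, and bound $\|\chi\|_m$ via the variational property using the bump $\psi(\alpha)=\sigma(\d(\xi,\alpha)/q)$. Two small bookkeeping points: (i) the degenerate case is cleaner handled uniformly via $n\ge t-1$, so $\epsilon^n\le\epsilon^{-1}\epsilon^t$ for all $t\ge0$, avoiding your slightly mismatched split at ``$T/h\le1$'' versus $n=0$; (ii) in invoking Lemma~\ref{Fran} for $\|\psi\|_m$, the relevant constant is $c_2$ (the upper bound), not $c_1^{-1}$.
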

\begin{proof}
The corollary is a consequence of iterating Lemma \ref{mainprop} as often as possible. 
Let $T = 3t\rad= 3n\rad + r$ with $0\le r< 3\rad$.  
The integer $n$ represents the number of times the inequality (\ref{E:mainprop}) can be iterated. It follows that
\begin{eqnarray*}
\|\chi\|_{m,\comp(\xi,T)} 
%
&\le& 
\epsilon \|\chi\|_{m,\comp(\xi,3(t-1)\rad)}\\
&\le& 
\epsilon^n \|\chi\|_{m} \le \epsilon^{-1} \epsilon^t \;\|\chi\|_{m}\\
&=&
\epsilon^{-1} \left(\epsilon^{\frac{\Gamma_1 h_0}{3}}\right)^{\frac{T}{h}} \;\|\chi\|_{m}.
\end{eqnarray*} 
The first inequality in the statement of the corollary follows with $\nu = |\frac{\Gamma_1 h_0}{3}\log \epsilon|$ and $\epsilon^{-1}$ is absorbed into the constant.

The norm of $\chi$ can be estimated by using the technique at the end of the proof of Lemma \ref{LagrangeBound}. It is a consequence of the variational property of $\chi$,
namely that $\|\chi\|_{m}\le \|\psi\|_{m}$ with $\psi$ a properly scaled
bump function: $\psi(\alpha):=\sigma\left(\frac{\d(\xi,\alpha)}{q} \right)$. 
Computing its norm as we did in Lemma \ref{LagrangeBound} gives
$$\|\psi\|_m
\le
c_2\left\|\sigma\left(\frac{|\cdot|}{q}\right)\right\|_m \le C q^{d/2-m}$$
and the second inequality follows.
\end{proof}
We are now able to give the pointwise bound on the Lagrange function by applying Lemma \ref{zeros} (the zeros lemma). 
We note that when $\inj = \infty$, the first estimate in Proposition \ref{pointwise} holds throughout $\M$ (this is the case when $\M = \reals^d$). For the surface spline kernels
the following proposition holds by applying \cite[Corollary p.130]{Mat}, while for 
Sobolev splines it follows in precisely the same way as for $\kappa_{\M,m}$.
%
%
%
\begin{proposition}\label{pointwise}
If $\Xi$ has mesh norm $h<\Gamma_1h_0\min(1,\inj/2)$ then we have the estimate 
$$|\chi(\alpha)| \le C_{m,\M}\left(\frac{h}{q}\right)^{m-d/2}\,
\begin{cases}  
  \exp
  \left[-\nu\left(\frac{\d(\xi,\alpha)}{h}\right)\right]
  \quad &\text{for}\ 
  \d(\alpha,\xi)\le \inj\\
  \exp\left[-\nu\left(\frac{\inj}{h}\right)\right]
  \quad &\text{for}\ 
  \d(\alpha,\xi)> \inj.
\end{cases}
$$
\end{proposition}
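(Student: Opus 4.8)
The plan is to combine the exponential Sobolev-norm decay from Corollary~\ref{SobExpDec} with the Zeros Lemma (Lemma~\ref{zeros}), applied on a suitable geodesic ball centered at the point $\alpha$ where we wish to bound $\chi$. The key observation is that $\chi$ vanishes on $\Xi\setminus\{\xi\}$, and if $\alpha$ is far enough from $\xi$, then a geodesic ball around $\alpha$ of radius comparable to $h$ (enlarged by the mesh-ratio constant so that it is guaranteed to contain points of $\Xi$, hence zeros of $\chi$) lies entirely in the region where the Sobolev norm of $\chi$ is already exponentially small.

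First I would fix $\alpha$ with $\d(\xi,\alpha)=:R$ and set $r = h/(h_0\Gamma_1)$, which is $<\inj$ under the stated mesh hypothesis; enlarging by the constants in the hypotheses of the Zeros Lemma, the ball $\b(\alpha,r)$ contains a point of $\Xi$, and since that point is not $\xi$ (as long as $R$ exceeds a fixed multiple of $h$), $\chi$ vanishes somewhere in $\b(\alpha,r)$ — actually on a set with the required mesh norm $h\le \Gamma_1 r h_0$. Lemma~\ref{zeros} then gives
\[
|\chi(\alpha)| \le C_{m,\M}\, r^{m-d/2}\,\|\chi\|_{W_2^m(\b(\alpha,r))}
\le C_{m,\M}\, \left(\frac{h}{h_0\Gamma_1}\right)^{m-d/2}\,\|\chi\|_{m,\comp(\xi, R-r)},
\]
the last step because $\b(\alpha,r)$ is contained in the complement of the ball $\b(\xi,R-r)$. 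Now apply Corollary~\ref{SobExpDec} with $T = R-r$ to bound $\|\chi\|_{m,\comp(\xi,R-r)}$ by $C_{m,\M}\,e^{-\nu(R-r)/h}\,q^{d/2-m}$; absorbing $e^{\nu r/h} = e^{\nu/(h_0\Gamma_1)}$, a fixed constant, into $C_{m,\M}$ yields $|\chi(\alpha)|\le C_{m,\M}(h/q)^{m-d/2}e^{-\nu R/h}$, which is the first case. For $\d(\alpha,\xi)>\inj$, one instead takes $T$ just below $\inj$ (so the ball $\b(\alpha,r)$ still sits inside $\comp(\xi,T)$, using $h<\Gamma_1 h_0\inj/2$ to guarantee $R-r$ can be taken $\ge \inj/2$ or so), and Corollary~\ref{SobExpDec} caps the Sobolev norm at $e^{-\nu\inj/h}q^{d/2-m}$, giving the second case. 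For small $\d(\alpha,\xi)$ (below the fixed multiple of $h$ where the above argument needs $R>cr$), the exponential factor is bounded below by a constant, so the estimate reduces to the uniform bound, which follows from Lemma~\ref{LagrangeBound} together with $h/q\ge 1$; one just needs to check the constants line up, which is routine.

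The main obstacle I anticipate is the bookkeeping near the center: making sure that for \emph{every} $\alpha$ — including those with $\d(\xi,\alpha)$ comparable to $h$ — one has a valid ball $\b(\alpha,r)$ containing a zero of $\chi$ distinct from the interpolation condition at $\xi$, and that the radius $r$ can be chosen uniformly. One resolves this by splitting into the regime $\d(\xi,\alpha)\le \Lambda h$ (for a suitable fixed $\Lambda$), handled by Lemma~\ref{LagrangeBound} and $\rho\ge 1$, and the regime $\d(\xi,\alpha)>\Lambda h$, handled by the zeros-lemma-plus-decay argument above; choosing $\Lambda$ large enough that $\b(\alpha,r)$ avoids $\xi$ makes the two regimes overlap cleanly, and the constant $\nu$ may need to be shrunk slightly to absorb the transition. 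The only other point requiring care is the passage $T\mapsto R-r$ versus $T\mapsto\inj$: one must verify the hypothesis $0\le T<\inj$ of Corollary~\ref{SobExpDec} in both cases, which the mesh condition $h<\Gamma_1 h_0\min(1,\inj/2)$ is designed to ensure.
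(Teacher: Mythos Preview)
Your proposal is correct and follows essentially the same approach as the paper: apply the Zeros Lemma on a ball $\b(\alpha,r)$ with $r=h/(\Gamma_1 h_0)$, use the inclusion $\b(\alpha,r)\subset\comp(\xi,R-r)$, and invoke Corollary~\ref{SobExpDec}; the far case $\d(\alpha,\xi)>\inj$ is handled exactly as you describe, by capping $T$ at (just below) $\inj$.

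The only notable difference is your treatment of the near-center regime. You defer to Lemma~\ref{LagrangeBound}, whereas the paper stays with the Zeros Lemma but doubles the radius to $r=2h/(\Gamma_1 h_0)$: on $\b(\alpha,2h/(\Gamma_1 h_0))$ the zero set $\Xi\setminus\{\xi\}$ has mesh norm at most $2h=\Gamma_1 r h_0$, so Lemma~\ref{zeros} applies even when $\xi$ sits inside the ball, and then Corollary~\ref{SobExpDec} with $T=0$ gives the $(h/q)^{m-d/2}$ bound directly. Your route works too, but note that Lemma~\ref{LagrangeBound} as \emph{stated} only gives a bound depending on $\rho$; to get the explicit $(h/q)^{m-d/2}$ factor (rather than a constant depending on an assumed mesh-ratio bound, which is not among the hypotheses of Proposition~\ref{pointwise}) you must reach into its proof for the inequality $\|\chi\|_\infty\le 1+C\rho^{m-d/2}$ and then use $h/q\ge 1$. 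The paper's doubled-radius argument is slightly cleaner in that it avoids this detour.
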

\begin{proof}
For $\alpha$ near to $\xi$, say $\d(\xi,\alpha)\le \rad$, we can apply Lemma \ref{zeros} with $r=2\rad$
(i.e., on the ball $\b(\alpha,2\rad)$, where the zeros of $\chi_{\xi}$ have mesh norm bounded by $2h$)
to achieve
$$
|\chi(\alpha)| \le C_{m,\M}
\left(2\rad\right)^{m-d/2}
\|\chi\|_{m, \b(\alpha,2\rad)} 
\le
C_{m,\M}
\left(\frac{h}{q}\right)^{m-d/2}.$$
The second inequality is a consequence of Corollary \ref{SobExpDec} with $T=0$. Thus, for $\alpha \in \b(\xi,\rad)$ we have
$ |\chi(\alpha)| \le C_{m,\M}
\left(\frac{h}{q}\right)^{m-d/2} \exp(-\nu \frac{\d(\xi,\alpha)}{h})$.

%
A similar argument holds for $\rad<T=\d(\alpha,\xi)\le \inj$, where we can apply Lemma \ref{zeros} (now with 
$r= \rad$) to obtain
 \begin{eqnarray*}
  |\chi(\alpha)| 
&\le& C_{m,\M} \Rad^{m-d/2}  \|\chi\|_{W_2^m(\b(\alpha,\rad))}\\
&\le& C_{m,\M} h^{m-d/2}  \|\chi\|_{W_2^m(\b(x, T - \rad)^{\complement})}.
\end{eqnarray*}
The second inequality follows because the ball $\b(\alpha,\rad)$ is contained in $\b(\xi, T - \rad)^{\complement}$.
Applying Lemma \ref{SobExpDec} gives
 \begin{eqnarray*}
  |\chi(\alpha)| 
&\le& C_{m,\M} \left(\frac{h}{q}\right)^{m-d/2}\, \exp\left[-\nu\left(\frac{T - \rad}{h}\right)\right]\\
&\le& C_{m,\M} \left(\frac{h}{q}\right)^{m-d/2}\, \exp\left[-\nu\left(\frac{\d(\alpha,\xi)}{h}\right)\right].
\end{eqnarray*}
For $T>\inj$, we make use of the fact that $\comp(\alpha,\inj)\supset\comp(\alpha,T)$, so
we simply use the estimate for $T=\inj$ to obtain
$$|\chi(\alpha)| \le C \left(\frac{h}{q}\right)^{m-d/2}\, \exp\left[-\nu\left(\frac{\inj}{h}\right)\right],$$
since $\|\chi\|_{m,\comp(\alpha,T)} \le\|\chi\|_{m,\comp(\alpha,\inj)}.$
\end{proof}
\subsection{The Lebesgue Constant is Bounded}\label{lebesgue_const_bnd}
We are now in a position to prove our main theorem about the boundedness of the Lebesgue constant. 
It follows from the fast decay of the Lagrange functions by a standard argument that 
decomposes $\M$ \emph{en annuli}, counting the elements of $\Xi$ in each annulus
and balancing this against the influence of the far away Lagrange functions.
Quasiuniformity is used in two ways. First, to obtain bounds on Lagrange functions that
do not involve $h/q$, and, second, to estimate the cardinality of subsets of $\Xi$. We note
that $\Omega$, a measurable subset of $\M$, can be covered by a collection of small balls $\bigl(\b(\xi,q/2)\bigr)_{\xi\in \Xi\cap \Omega}$ that never overlap. Thus
\begin{equation}\label{card_est}
\# (\Xi\cap \Omega) \le \mu(\Omega)/\min_{\xi\in \Xi\cap\Omega}\mu(\b(\xi,q/2)\bigr)
\le C q^{-d}\mu(\Omega),
\end{equation}
with $C= C_{\M}$ depending only on the manifold.
\begin{theorem}[Lebesgue Constant]
Let $\M$ be a complete, compact Riemannian manifold of dimension $d$, and assume $m>d/2$. 
For a quasiuniform set $\Xi\subset \M$, with mesh ratio $h/q \le \rho $,
if $\tfrac{1}{2}h\le \Gamma_1 h_0\min(1,\inj/2)$, then the Lebesgue constant, 
$L = \sup_{\alpha\in \M} \sum_{\xi\in \Xi} |\chi_{\xi}(\alpha)|$,
associated with $\kappa_{m,\M},$
is bounded by a constant depending only on $m$, $\rho $ and $\M$.
\end{theorem}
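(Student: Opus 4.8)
The plan is to reduce the supremum over $\M$ to a bound that is uniform in the center $\alpha$, and then execute the standard annular decomposition against the exponential decay from Proposition~\ref{pointwise}. First I would fix $\alpha\in\M$ and split the sum $\sum_{\xi\in\Xi}|\chi_\xi(\alpha)|$ according to the distance $\d(\xi,\alpha)$. For the near part, $\d(\xi,\alpha)\le\inj$, I would organize $\Xi$ into the annuli $\a(\alpha,j,h)$, $j=1,2,\dots$, i.e.\ dyadic-in-$h$ shells $jh\le\d(\xi,\alpha)<(j+1)h$ up to roughly $j\approx \inj/h$; for the far part, $\d(\xi,\alpha)>\inj$, I would use the flat estimate $|\chi_\xi(\alpha)|\le C(h/q)^{m-d/2}e^{-\nu\inj/h}$ from the second case of Proposition~\ref{pointwise}.

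For the near part, each shell $\a(\alpha,j,h)$ has measure $\mu(\a(\alpha,j,h))\le C_\M (jh)^{d-1}h$ (a consequence of the volume comparison implicit in (\ref{isometry}) and Remark~\ref{det_dist_normal}), so by the cardinality estimate (\ref{card_est}), $\#(\Xi\cap\a(\alpha,j,h))\le C_\M q^{-d}(jh)^{d-1}h = C_\M \rho^{d}\, j^{d-1}$, using $h\le \rho q$. On that shell every Lagrange function satisfies $|\chi_\xi(\alpha)|\le C_{m,\M}\rho^{m-d/2}e^{-\nu j}$ by Proposition~\ref{pointwise} together with $h/q\le\rho$. Summing,
\[
\sum_{\d(\xi,\alpha)\le\inj}|\chi_\xi(\alpha)|
\le
C_{m,\M}\,\rho^{m-d/2}\sum_{j\ge1}\#(\Xi\cap\a(\alpha,j,h))\,e^{-\nu j}
\le
C_{m,\M}\,\rho^{m+d/2}\sum_{j\ge1} j^{d-1}e^{-\nu j},
\]
and the last series converges to a constant depending only on $d$ and $\nu$ (hence only on $m,\M$). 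For the far part, $\#\{\xi\in\Xi:\d(\xi,\alpha)>\inj\}\le\#\Xi\le C_\M q^{-d}\mu(\M)$, and multiplying by $(h/q)^{m-d/2}e^{-\nu\inj/h}\le \rho^{m-d/2}e^{-\nu\inj/h}$ gives a bound of the form $C_\M \rho^{m-d/2}\, q^{-d}e^{-\nu\inj/h}$; since $q\ge h/\rho$ and $x^{-d}e^{-c/x}$ is bounded for $x>0$, this is bounded by a constant depending only on $m,\rho,\M$. Taking the supremum over $\alpha$ finishes the proof, since none of the bounds depended on $\alpha$.

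The main obstacle, and the only place any care is needed, is controlling $\#(\Xi\cap\a(\alpha,j,h))$ uniformly in $\alpha$ and $j$: one needs the measure of a geodesic annulus of thickness $h$ and outer radius $jh$ to be $O((jh)^{d-1}h)$ with a constant independent of the center. For $jh\le\inj$ this follows from the uniform bi-Lipschitz control of the exponential map in (\ref{isometry}) and the uniform bounds on $\sqrt{\det g_{ij}}$ in Remark~\ref{det_dist_normal}, which transfer the Euclidean annulus volume estimate to the manifold with constants depending only on $\M$; a compactness covering argument handles any $\alpha$. Everything else is the routine convergence of $\sum j^{d-1}e^{-\nu j}$ and bookkeeping of the powers of $\rho$.
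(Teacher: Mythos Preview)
Your proposal is correct and follows essentially the same approach as the paper: fix $\alpha$, split into near ($\d(\xi,\alpha)\le\inj$) and far ($\d(\xi,\alpha)>\inj$) parts, handle the near part by an annular decomposition in shells of width $h$ together with the exponential decay from Proposition~\ref{pointwise}, and handle the far part by the flat bound $e^{-\nu\inj/h}$ times the total cardinality of $\Xi$ from (\ref{card_est}).

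The only (cosmetic) difference is in the shell cardinality estimate. The paper does not bother with the annulus volume bound you single out as ``the main obstacle''; it simply uses the cruder inequality
\[
\#\bigl(\Xi\cap \a(\alpha,k,h)\bigr)\le \#\bigl(\Xi\cap \b(\alpha,kh)\bigr)\le C q^{-d}\mu\bigl(\b(\alpha,kh)\bigr)\le C(\rho k)^d,
\]
which yields the series $\sum_k k^d e^{-\nu k}$ rather than your $\sum_j j^{d-1}e^{-\nu j}$. Both converge, so the difference is immaterial, but the paper's version sidesteps the need to control the volume of a thin geodesic annulus.
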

\begin{proof}
Fix $x$.
We first split the sum 
$$\sum_{\xi\in \Xi} |\chi_{\xi}(\alpha)| = 
\sum_{\substack{\xi\in \Xi\\ \d(\xi,\alpha)\le \inj}} |\chi_{\xi}(\alpha)|+
\sum_{\substack{\xi\in \Xi\\ \d(\xi,\alpha)\ge \inj}} |\chi_{\xi}(\alpha)|=:I+II$$
into two parts. 

The ``outer'' sum, $II$, can be estimated by H{\"o}lder's inequality using (\ref{card_est})
and Proposition \ref{pointwise}:
$$II \le  C q^{-d}\mu(\M) \exp\left[-\nu\left(\frac{\inj}{h}\right)\right]
\le C \rho ^d \mu(\M) h^{-d} \exp\left[-\nu\left(\frac{\inj}{h}\right)\right].
$$
which is bounded indepent of $h$.

The inner sum, $I$, is further decomposed into sums over sets $\Xi_k:=\Xi \cap \a(\alpha,k,h)$. 
By (\ref{card_est}), the cardinality of $\Xi_k$ is less than 
$\#(\Xi\cap \b(\alpha,hk)\le C q^{-d} \mu(\b(\alpha,hk)) \le C (\rho  k)^d$. The Lagrange functions centered in 
$\Xi_k$ 
are bounded by 
$|\chi_{\xi}(\alpha)| 
\le C \exp\left[-\nu\left(k-1\right)\right]
\le C \exp\left[-\nu k\right]$, because
$\d(\xi,\alpha)\ge (k-1)h$. 
Thus $I$ can be estimated by
$$I\le C\sum_{k=1}^{\infty}\sum_{\xi\in \Xi_k} \exp\left[-\nu k\right] \le 
C \rho ^d \sum_{k=1}^{\infty}  k^d\exp\left[-\nu k\right],$$
which is also bounded.
\end{proof}

\begin{remark} 
{\em We note that, when $\M= \reals^d$, one may apply this theorem to the family of kernels
known as the surface splines, as well as to the family known as the Sobolev (or Mat\'{e}rn) splines. This follows in a straightforward way from Proposition \ref{pointwise}, which 
can easily be shown to hold for the Lagrange functions associated to either of these kernels.
The crucial difference is that, when $\M=\reals^d$, the radius of injectivity is $\inj=\infty$, and Lagrange function is bounded by  
$|\chi(\alpha)| \le C_{m,d}\left(\frac{h}{q}\right)^{m-d/2}\, 
  \exp
  \left[-\nu\left(\frac{|\xi - \alpha|}{h}\right)\right]$
throughout $\reals^d$. It follows that sum in the preceding proof 
is controlled by the inner sum $I$ only (and not $II$).}
\end{remark}

\section{Example: Interpolation on $\sph^2$}\label{example_S2}
In this section, we provide the example of the kernel $\kappa_{2,\sph^2}$ that gives
rise to bounded Lebesgue constants for quasi-uniform data and whose
$L^\infty$ approximation order can be given exactly. This will be
accomplished by establishing that $\kappa_2(x,y) =: \phi(x\cdot y)$ belongs to
a class of SBFs studied in \cite{MNPW} whose $L^p$, $1\le p\le \infty$,
approximation orders were explicitly obtained. For the readers
convenience, we review relevant details from \cite{MNPW} which contains a
full discussion of the material below.

Let $H_\ell$ denote the span of the spherical harmonics with fixed
order $\ell$ on $\sphere$.
The orthogonal projection $P_\ell$ onto $H_\ell$ is given by
\[
 P_\ell f = \sum^{N^d_\ell}_{m=1} \langle f,Y_{\ell,m}\rangle Y_{\ell,m}.
\]
Let $L_d := \sqrt{\lambda^2_d - \Delta_{\sphere}}$ be the
pseudo-differential operator where $\Delta_{\sphere}$ is the
Laplace--Beltrami operator on $\sphere$ and $\lambda_d = \frac{d-1}2$. The
Bessel potential Sobolev spaces have the norm given by
\[
\|f\|_{H^p_\gamma} := \left\|\sum^\infty_{\ell=0}
  (\ell+\lambda_d)^\gamma P_\ell f\right\|_{L^p(\sphere)}.
\]
Finally for $\beta=1,2,\ldots$, let $G_\beta$ be the Green's function
for $L^\beta_d$, i.e.,
\[
 L^\beta_d G_\beta(x\cdot y) = \delta_y(x).
\]
More generally let $\phi_\beta := G_\beta + G_\beta *\psi$ where
$\psi$ is an $L^1$ zonal function and let 
$S_{\Xi} :=
\spam \{\phi_\beta(x\cdot \xi)\colon \ \xi\in \Xi\}$ where $\Xi$ has
mesh ratio $ h/q \le \rho $.

The following was given in \cite[Theorem 6.8]{MNPW}.

\begin{theorem} Let $1\le p \le \infty,\enskip \gamma\ge 0,\enskip
\beta>\gamma + d/p' \quad$ $(1/p + 1/p'=1)$. If $f\in H^p_\beta$, then
with $\phi_\beta$ and $S_{\Xi}$ as given above, and for quasiuniform centers
$\Xi$ having mesh ratio $\rho $,
\[
\d_{H^p_\gamma}(f,S_{\Xi}) \le Ch^{\beta-\gamma}
\rho ^d\|f\|_{H^p_\beta}\quad (H^p_0 = L^p).
\]
\end{theorem}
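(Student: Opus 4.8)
The statement is quoted verbatim from \cite[Theorem 6.8]{MNPW}, so the plan is to indicate how it is obtained in that reference rather than to reprove it from scratch. The key point is that the kernels $\phi_\beta = G_\beta + G_\beta * \psi$ are, up to the benign $L^1$ perturbation $\psi$, the Green's functions of the positive elliptic pseudo-differential operator $L_d^\beta = (\lambda_d^2 - \Delta_{\sphere})^{\beta/2}$. The interpolation (or, more generally, the least-squares or quasi-interpolation) space $S_\Xi = \spam\{\phi_\beta(x\cdot\xi): \xi\in\Xi\}$ therefore sits inside the Bessel-potential Sobolev scale $H^p_\gamma(\sphere)$, and one can bring to bear the machinery of \cite{MNPW}: a Bernstein inequality for $S_\Xi$, a bandlimited approximation step, and a duality/Riesz-representation argument that converts the variational optimality of the kernel fit into a sampling-operator bound.

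First I would record the spherical harmonic expansion of $\phi_\beta$: writing $\phi_\beta(x\cdot y) = \sum_\ell \widehat{\phi_\beta}(\ell) \sum_m Y_{\ell,m}(x)Y_{\ell,m}(y)$, the Green's-function property forces $\widehat{G_\beta}(\ell) \asymp (\ell+\lambda_d)^{-\beta}$, and since $\psi\in L^1$ its zonal coefficients are bounded, so $\widehat{\phi_\beta}(\ell) \asymp (\ell+\lambda_d)^{-\beta}$ as well, with positivity for all $\ell$ (this is what makes $\phi_\beta$ strictly positive definite and the interpolation problem solvable). This identifies the native space of $\phi_\beta$ as (norm-equivalent to) $H^2_\beta(\sphere)$. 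Next I would invoke the two standard estimates from \cite{MNPW}: (i) a \emph{direct} estimate producing a bandlimited approximant $g_L$ to $f\in H^p_\beta$ with $\|f - g_L\|_{H^p_\gamma} \le C L^{-(\beta-\gamma)}\|f\|_{H^p_\beta}$, choosing $L \asymp h^{-1}$; and (ii) an estimate controlling $\|f - Q f\|_{H^p_\gamma}$ for the kernel-based operator $Q$ (interpolation or its least-squares variant) in terms of a discrete sampling functional applied to $f - g_L$, using the quasiuniformity of $\Xi$ to pass between continuous and discrete norms (a Marcinkiewicz--Zygmund / sampling inequality) — this is where the mesh ratio $\rho$ and the dimensional power $\rho^d$ enter, exactly as in the counting estimate (\ref{card_est}) of the present paper. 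Combining (i), (ii), and a Bernstein inequality $\|g\|_{H^p_\beta} \le C L^{\beta-\gamma}\|g\|_{H^p_\gamma}$ valid on the bandlimited space containing $g_L$ (and, after a stability estimate, on $S_\Xi$) yields $\d_{H^p_\gamma}(f,S_\Xi) \le C h^{\beta-\gamma}\rho^d\|f\|_{H^p_\beta}$.

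The restriction $\beta > \gamma + d/p'$ is precisely the Sobolev-embedding threshold guaranteeing $H^2_\beta \hookrightarrow C(\sphere)$ in the relevant dual sense, so that point evaluation (hence interpolation at $\Xi$) is a bounded functional on the native space and the sampling inequality in step (ii) is legitimate; I would check this condition is used only there. The main obstacle — and the part that genuinely requires the work in \cite{MNPW} rather than a soft argument — is step (ii): establishing the near-best-approximation property of the kernel fit in the \emph{full scale} of $H^p_\gamma$ norms (not just $p=2$), which requires $L^p$ Marcinkiewicz--Zygmund inequalities on the sphere with constants controlled by the mesh ratio, together with $L^p$ bounds on the kernel operator itself; tracking the $\rho^d$ dependence cleanly through these $L^p$ sampling estimates is the delicate point. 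Since all of this is carried out in \cite{MNPW}, here it suffices to cite that theorem and note that $\kappa_{2,\sph^2}$ is of the stated form with $d=2$, $\beta=4$, which is verified by the explicit computation in this section.
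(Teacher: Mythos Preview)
Your proposal is appropriate: the paper does not prove this theorem at all but simply quotes it from \cite[Theorem~6.8]{MNPW}, exactly as you observe in your first sentence. Your sketch of the argument from that reference (harmonic coefficient decay $\widehat{\phi_\beta}(\ell)\asymp(\ell+\lambda_d)^{-\beta}$, native space identification, bandlimited approximation plus $L^p$ sampling/Marcinkiewicz--Zygmund inequalities) is a reasonable outline of how that result is obtained there, and nothing more is required here.
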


We next wish to show that $\kappa_{2}(x,y)=\phi(x\cdot y)$ defined on $\sph^2$ of
the form $\phi = G_4 + G_4 *\psi$, $\psi \in L^1(\sph^2)$ and zonal. By
the above theorem, $\phi$ will have $L^\infty$ approximation order $4$ for
functions in $f\in C^4(\sph^2)$. For functions of lower smoothness,
a simple $K$-functional argument shows
that for $f \in C^{\beta}$,
$$\d_{\infty}(f,S_{\Xi}) \le Ch^{\beta} \|f\|_{C^{\beta}},$$
since such functions satisfy $K(f,t)\le C_{f,\beta} t^{\beta}$ 
for all $t>0$, where
$$K(f,t):=\sup_{g\in C^4(\sph^2)} \|f-g\|_{\infty} + t^4\|g\|_{C^4(\sph^2)}.$$

To this end let $\phi = \phi(x,y)$ be the reproducing kernel
for the Hilbert space with inner product
\[
\langle u,v\rangle_N := \langle \nabla^2u, \nabla^2v\rangle_{L^2(S^2)}
+ \langle\nabla u, \nabla v\rangle_{L^2(S^2)} + \langle u,v\rangle_{L^2(S^2)}
\]
where the $\nabla u$ is the covariant derivative and $\nabla^2u$, the
``Hessian,'' is given by
\[
\nabla^2f = \left(\frac{\partial^2f}{\partial x^i\partial x^j} -
  \Gamma^s_{i,j} \frac{\partial f}{\partial x^k}\right) e^i\otimes
e^j.
\]
Thus $\langle u,\phi\rangle_N = u(x)$ where
\begin{align*}
  \langle u,\phi\rangle_N &= \langle\nabla^2u, \nabla^2\phi\rangle +
  \langle\nabla u, \nabla \phi\rangle+ \langle u,
  \phi\rangle\\
  &= \langle u,L\phi\rangle -\langle u,\Delta \phi\rangle +
  \langle u,\phi\rangle\\
  &= \langle u,(L-\Delta+I)\phi\rangle =: \langle u,\widetilde
  L\phi\rangle.
\end{align*}

One next needs to identify $L$, which, in turn, will lead to the
identification of the Green's function $\phi$. This requires
calculating the Christoffel symbols.

We will do this in $(\theta,\varphi)$-spherical coordinates, with
$\theta$ being the colatitude and $\varphi$ is the azimuthal angle
(longitude). The orthonormal basis for a tangent plane at
$(\theta,\varphi)$ comprises unit tangent vectors along $\theta$ and
$\varphi$, $\{\bfe_\theta,\bfe_\varphi\}$. With some work, it can be shown that
\begin{align*}
\nabla^2 u = u_{\theta\theta}\, \bfe_\theta \otimes \bfe_\theta &+ \text{csc }
\theta(u_{\theta\varphi} - u_\varphi \cot \theta) (\bfe_\theta \otimes \bfe_\varphi +
\bfe_\varphi \otimes \bfe_\theta)  \\ 
&+\left(\frac1{\sin^2\theta} u_{\varphi\varphi} +
  \cot \theta u_\theta\right) \bfe_\varphi \otimes \bfe_\varphi.
\end{align*}
At this point we postulate that $\widetilde L$ is zonal in which case
all partial derivatives of $u$ with respect to $\varphi$ are zero. It
will be shown that $\widetilde L$ will have a zonal Green's
function. In this case, $\nabla^2u$ reduces to
\[
\nabla^2u = u_{\theta\theta}\, \bfe_\theta \otimes \bfe_\theta + \cot \theta
u_\theta \bfe_\varphi \otimes \bfe_\varphi
\]
and 
\[
\nabla^2 u \cdot \nabla^2v =u_{\theta\theta} v_{\theta\theta} +
\text{csc}^2 \theta\{\sin \theta \cos \theta u_\theta\} [v_{\varphi\varphi}
+ \cos \theta \sin \theta v_\theta].
\]
It follows that
\begin{align*}
\int\limits_{S^2} \nabla^2u \cdot \nabla^2v \ d\mu =& \int\limits_{S^2}
u_{\theta\theta} v_{\theta\theta} \sin \theta \ d\theta d\varphi \\ 
+\int\limits_{S^2} (\text{csc}^4 \theta  & \sin \theta \cos \theta
u_\theta) (v_{\varphi\varphi} + \sin \theta \cos \theta v_\theta) \sin
\theta\ d\theta d\varphi.
\end{align*}
Upon integration by parts first in the $\varphi$ variable and then in the
$\theta$ variable one obtains
\begin{align*}
  \int_{\sph^2} \nabla^2u\cdot \nabla^2v \ d\mu 
  &= \int_{\sph^2} (L_\theta u) v\sin \theta d\theta d\varphi \quad\text{where}\\
  L_\theta u &= \frac1{\sin\theta} \left\{\frac{d^2}{d\theta^2}
    \left(\sin\theta \frac{d^2u}{d\theta^2}\right) - \frac{d}{d\theta}
    \left(\cot \theta \cos\theta \frac{d\mu}{d\theta}\right)\right\}.
\end{align*}
Thus the Green's function $\phi$ must be a solution to
\begin{align}\label{eq1}
  \widetilde Lu &=  (L_\theta -\Delta +I) u \\
  &= \frac1{\sin\theta} (\sin\theta u'')'' - \frac1{\sin\theta} (\cos \theta\cot \theta u')' - \frac1{\sin\theta} (\sin\theta u')'+ u = 0  \nonumber
 \end{align}
for $\theta \in (0,\pi)$. 

The equation \eqref{eq1} has four analytic solutions in the interval
$(0,\pi)$. At $\theta=0$ and $\theta=\pi$, it has \emph{regular
  singular points}. We will use the method of Frobenius for analyzing
the solution and the associated singularities \cite[p.~132]{CodLev}. 

In this method, a solution to \eqref{eq1} is assumed to have a power
series solution of the form
\[
u(\theta) = \theta^\beta \sum^\infty_{k=0} b_k\theta^k, \text{ where
  $\beta\in \mathbb C$ is unknown.}
\]
Substituting $u(\theta)$ into \eqref{eq1} results in the indicial
equation, which is $\beta^2(\beta-2)^2=0$ in this case. Two solutions to \eqref{eq1} are determined by the two roots $\beta=0$ and $\beta=2$ and the corresponding equations for the $b_k$'s. The other two of the four linearly independent solutions
are then derived from these. Carrying out the details yields the four
solutions below.
\begin{align}
u_1(\theta) &= p_1(\theta^2) \text { and } u_2(\theta)=
\ln(\theta^2)p_2(\theta^2), \ \beta=0,\label{beta0}\\
u_3(\theta)&=\theta^2p_3(\theta^2)
\text{ and } u_4(\theta)=\theta^2\left( p_4(\theta^2) +
\ln(\theta^2)p_5(\theta^2)\right), \ \beta=2. \label{beta2}
\end{align}
The $p_j$'s are all power series convergent in a neighborhood of
$\theta=0$. In addition, they all satisfy $p_j(0)\ne 0$. 

It is important to note that these solutions have only \emph{even}
powers in their series. This is a consequence of the symmetry $\theta
\to -\theta$ of the differential equation and the method of Frobenius
itself.

There is another symmetry that preserves \eqref{eq1}; namely, $\theta
\to \pi - \theta$. This transforms the regular singular point $\theta
=\pi$ into the one at $\theta =0$. Consequently, the four functions
$u_j(\pi - \theta)$, $j=1$ to $4$, are linearly independent solutions
to \eqref{eq1} valid near $\theta = \pi$.

Thus the Green's function $\phi$ for $\widetilde L$ has the form
$\phi(\theta) = \sum\limits^4_{j=1} a_ju_j(\theta)$. Note that $\phi$
must have finite norm, i.e.,
\[
\|\phi\|^2_N := \langle\nabla^2\phi, \nabla^2\phi\rangle +\langle\nabla\phi, \nabla \phi\rangle+
\langle\phi,\phi\rangle.
\]
By Kondrakov's embedding theorem (see \cite{Aub}), $\phi$
necessarily must be continuous (as a function of $\theta$) on
$[0,\pi]$. Thus $\phi$ cannot have either a $u_2(\theta)$ component or
a $u_2(\pi - \theta)$ component. The singular behavior of the Green's function $\phi$ at $\theta=0$ is thus determined by $u_4(\theta)$. 

If  there is any singular behavior at $\theta=\pi$, it will come from $u_4(\pi -\theta)$. This behavior is precisely the same type as at $\theta=0$. If the Green's function $\phi$ at
$\pi$ had the same singular behavior as at $\theta=0$, then
$\widetilde L\phi$ would exhibit nonzero distributional behavior at
$\theta=\pi$, producing a second $\delta$ function there. Since this doesn't occur for the reproducing kernel, the Green's function $\phi(\theta)$ is analytic near $\theta=\pi$. Here is a summary of what's been found:
\begin{equation}
\label{summary_phi}
\phi(\theta) = \left\{
\begin{aligned}
 Ap_1(\theta^2)+ A\theta^2 p_3(\theta^2) &+ 
C\theta^2\left((p_4(\theta^2)+\ln(\theta^2)p_5(\theta^2\right)), \ \theta \approx 0,\\
 Dp_1((\pi - \theta)^2)&+ E(\pi - \theta)^2 p_3((\pi - \theta)^2) , \ \theta\approx \pi.
\end{aligned}
\right.
\end{equation}
We also mention that $\phi(\theta)$ is real analytic in $\theta$ on the interval $(0,\pi]$. (Of course, it is singular at $\theta=0$.)

There is one final step that we need to take to determine the order of approximation; namely, we need to obtain the Green's function $\phi(\theta)$ in terms of $t=\cos \theta$.  Near $\theta=0$, we write $\sin(\theta/2) = \sqrt{(1-t)/2}$ and obtain $\theta = 2\arcsin \sqrt{(1-t)/2}$, for which $\theta^2(t)$ is analytic in a neighborhood of $t=1$. At $\pi$, we use $\pi - \theta = 2\arcsin \sqrt{(1+t)/2}$. This gives us $(\pi - \theta(t))^2$ analytic near $t=-1$. Inspecting the structure of the solution in \eqref{summary_phi} shows that $\phi$ can be expressed in the following way as a function of $t=\cos \theta$:
\[
\phi(\theta(t)) =  \alpha_0(1-t)\ln(1-t) + \cdots +\alpha_K(1-t)^K\ln(1-t)+ f_K(t),
\]
with $\alpha_0\ne 0$, and where, by adjusting $K$,  $f_K$ can be taken to be as smooth as one wishes on $[-1,1]$. To finish up, observe that in \cite[p.~8]{MNPW}, this singularity corresponds to $\beta= 2\cdot 1+2=4$. Thus the approximation rate is $h^4$.
\bibliographystyle{siam}
\bibliography{HNW}
\end{document}